\numberwithin{equation}{section}
\theoremstyle{plain}
\newtheorem{lemma}{Lemma}[section]
\newtheorem{proposition}[lemma]{Proposition}
\newtheorem{theorem}[lemma]{Theorem}
\newtheorem{corollary}[lemma]{Corollary}
\theoremstyle{definition}
\newtheorem{definition}[lemma]{Definition}
\newtheorem{example}[lemma]{Example}
\newcommand{\R}{{\mathbb R}}
\newcommand{\Z}{{\mathbb Z}} 
\newcommand{\N}{{\mathbb N}}
\newcommand{\Hh}{{\mathcal H}}
\newcommand{\Ll}{{\mathcal L}}
\newcommand{\ad}{{\rm ad}}
\newcommand{\Om}{{\Omega}}
\newcommand{\om}{{\omega}}
\newcommand{\X}{{\mathfrak X}}
\newcommand{\la}{\langle}
\newcommand{\ra}{\rangle}
\newcommand{\p}{{\partial}}
\newcommand{\vol}{\mbox{\rm vol}}
\newcommand{\rmspan}{\mbox{\rm span}}
\newcommand{\codim}{\mbox{\rm codim}}
\newcommand{\Sp}{{\text{\rm Spin}(7)}}
\renewcommand{\Im}{{ \rm Im \,}}
\renewcommand{\frak}[1]{{\mathfrak {#1}}}
\begin{document}
\date{\today}
\title[Fr\"olicher-Nijenhuis cohomology on $G_2$- and ${\rm Spin}(7)$-manifolds]
{Fr\"olicher-Nijenhuis cohomology on $G_2$- and ${\rm Spin}(7)$-manifolds}

\author{Kotaro Kawai}
\address{Graduate School of Mathematical Sciences, University of Tokyo, 3-8-1, Komaba, Meguro, Tokyo 153-8914, Japan}
\email{kkawai@ms.u-tokyo.ac.jp}

\author{H\^ong V\^an L\^e}
\address{Institute of Mathematics CAS
, Zitna 25, 11567 Praha 1, Czech Republic}
\email{hvle@math.cas.cz}

\author{Lorenz Schwachh\"ofer}
\address{Fakult\"at f\"ur Mathematik,
TU Dortmund University,
Vogelpothsweg 87, 44221 Dortmund, Germany}
\email{lschwach@math.tu-dortmund.de} 

\thanks{The first named author is supported Grant-in-Aid for JSPS fellows (26-7067),
the second named author was supported by RVO: 67985840 and the GA\v CR-project 18-00496S. The third author acknowledges partial support by grant SCHW893/5-1 of the Deutsche Forschungsgemeinschaft.}

\begin{abstract} In this paper we show that a parallel differential form $\Psi$ of even degree on a Riemannian manifold allows to define a natural differential both on $\Om^\ast(M)$ and $\Om^\ast(M, TM)$, defined via the Fr\"olicher-Nijenhuis  bracket. For instance, on a K\"ahler manifold, these operators are the complex differential and the Dolbeault differential, respectively. We investigate this construction when taking the differential w.r.t. the canonical parallel $4$-form on a $G_2$- and $\Sp$-manifold, respectively. We calculate the cohomology groups of $\Om^\ast(M)$ and give a partial description of the cohomology of $\Om^\ast(M, TM)$.
\end{abstract}
\keywords{Special holonomy, $G_2$-manifold, $\Sp$-manifold, Fr\"olicher-Nijenhuis  bracket, cohomology invariant}
\subjclass[2010]{Primary: 53C25, 53C29, Secondary: 17B56, 17B66}

\maketitle

%%%%%%%%%%%%%%%%%%%%%%%%%%%%%%%%%%%%%%%%%%%%%%%%%%%%%%%%%
\section{Introduction}
%%%%%%%%%%%%%%%%%%%%%%%%%%%%%%%%%%%%%%%%%%%%%%%%%%%%%%%%%

In Riemannian geometry, the investigation of manifolds with special holonomy is a central field of research. Assuming a Riemannian manifold is irreducible and not locally symmetric, there are only few possible subgroups of the orthogonal group which can occur as holonomies, classified by Berger \cite{Ber}. Most of the possible holonomy groups may be characterized as the stabilizer of (one or two) differential forms in the orthogonal group, leading to the existence of parallel forms on such manifolds. For instance, K\"ahler manifolds admit the parallel K\"ahler-$2$-form, Calabi-Yau manifolds admit in addition a complex volume form \cite{Yau-3}; hyper-K\"ahler manifolds admit three linearly independent parallel $2$-forms \cite{Cal5}, and similarly, manifolds with exceptional holonomy $G_2$ in dimension $7$ and $\Sp$ in dimension $8$ may be characterized by a parallel $4$-form \cite{Bonan}, \cite{Gra1}.  For the extremely  rich structure of manifolds with these holonomy groups, we refer to \cite{Bryant1987}, \cite{Bryant2005}, \cite{BS1989}, \cite{HL1982}, and for the discussion of closed manifolds with these holonomies we refer to \cite{JoyceG2}, \cite{JoyceSp}, \cite{Joyce2000} and \cite{Joyce2007}. We
also  refer to \cite{Le2013} for description  of Riemannian  manifolds  admitting a parallel 3-form.

It is the aim of the present paper to construct cohomology invariants on an oriented Riemannian manifold $(M,g)$ with a parallel form of even degree. Namely, to such a form, say $\Psi^{2k}$, we associate two differentials
\[
\Ll_{\Psi^{2k}}: \Om^\ast(M) \to \Om^{\ast+2k-1}(M), \qquad \Ll_{\Psi^{2k}}: \Om^\ast(M, TM) \to \Om^{\ast+2k-1}(M, TM),
\]
i.e., $\Ll_{\Psi^{2k}}$ is a derivation   and $\Ll_{\Psi^{2k}} \circ \Ll_{\Psi^{2k}} = 0$ in each case. Their cohomologies, denoted as $H^\ast_{\Psi^{2k}}(M)$ and $H^\ast_{\Psi^{2k}}(M, TM)$, respectively, form a graded algebra and a graded Lie algebra, respectively, where the $i$-th cohomologies are defined as
\begin{equation} \label{eq:def-cohom}
\begin{array}{rcl}H^i_{\Psi^{2k}}(M) & := & \dfrac{\ker \Ll_{\Psi^{2k}}: \Om^i(M) \to \Om^{i+2k-1}(M)}{\Im \Ll_{\Psi^{2k}}: \Om^{i-2k+1}(M) \to \Om^i(M)},\\[5mm] H^i_{\Psi^{2k}}(M, TM) & := & \dfrac{\ker \Ll_{\Psi^{2k}}: \Om^i(M, TM) \to \Om^{i+2k-1}(M, TM)}{\Im \Ll_{\Psi^{2k}}: \Om^{i-2k+1}(M, TM) \to \Om^i(M, TM)}.\end{array}
\end{equation}

The action of $\Ll_{\Psi^{2k}}$ on differential forms is given by the formula
\[
\Ll_{\Psi^{2k}}(\alpha) = (d^\ast \alpha) \wedge \Psi^{2k} - d^\ast (\alpha \wedge \Psi^{2k}).
\]

For instance, in the case of a K\"ahler manifold, using the K\"ahler form $\Psi = \om$, the differential $\Ll_\om$ on $\Om^\ast(M)$ is the complex differential $d^c := i (\p- \bar{\p})$, whereas on $\Om^\ast(M, TM)$ it coincides with the Dolbeault differential $\bar{\p}: \Om^{p,q}(M, TM) \to \Om^{p,q+1}(M, TM)$ \cite{FN1956b}, cf. Example \ref{ex:Kaehler}. Thus, these differentials recover well known and natural cohomology theories.

On the other hand, for $G_2$- and $\Sp$-manifolds, there are canonical parallel $4$-forms, denoted by $\ast \varphi$ and $\Phi$, respectively, and we may consider the respective differentials $\Ll_{\ast \varphi}$ and $\Ll_\Phi$. On closed manifolds, we obtain the following results on their cohomology groups.

\begin{theorem}\label{thm:hG2} Let $(M^7, \varphi)$ be a closed $G_2$-manifold. Then for the cohomologies $H^i_{\ast \varphi}(M^7)$ and $H^i_{\ast \varphi}(M^7, TM^7)$ defined above, the following hold.
\begin{enumerate}
\item There is a Hodge decomposition
\begin{align*}
H^i_{\ast \varphi}(M^7) =\;& \Hh^i(M^7) \oplus (H^i_{\ast \varphi}(M^7) \cap d\Om^{i-1}(M^7)) \\ &\oplus (H^i_{\ast \varphi}(M^7) \cap d^\ast\Om^{i+1}(M^7)),
\end{align*}
where $\Hh^i(M^7)$ denotes the spaces of harmonic forms.
\item The Hodge-$\ast$ induces an isomorphism $\ast: H^i_{\ast \varphi}(M^7) \to H^{7-i}_{\ast \varphi}(M^7)$.
\item $H^i_{\ast \varphi}(M^7) = \Hh^i(M^7)$ for $i = 0,1,6,7$. For $i = 2,3,4,5$, $H^i_{\ast \varphi}(M^7)$ is infinite dimensional.
\item $\dim H^0_{\ast \varphi}(M^7, TM^7) = b^1(M^7)$; in particular, $H^0_{\ast \varphi}(M^7) = 0$ if $M^7$ has full holonomy $G_2$.
\item $\dim H^3_{\ast \varphi}(M^7, TM^7) \geq b^3(M^7) > 0$, as it contains all torsion free deformations of the $G_2$-structure modulo deformations by diffeomorphisms.
\end{enumerate}
\end{theorem}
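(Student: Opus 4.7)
The plan is to build everything on the identification $\Ll_{\ast\varphi} = -[d^\ast, e_{\ast\varphi}]$ (with $e_{\ast\varphi}$ the operator of wedging with $\ast\varphi$), together with two commutation facts: $\{d, e_{\ast\varphi}\} = 0$, which holds since $d(\ast\varphi) = 0$, and $[\Delta, e_{\ast\varphi}] = 0$, which follows from $\nabla(\ast\varphi) = 0$ (so $e_{\ast\varphi}$ commutes with $\nabla^\ast \nabla$) combined with the $G_2$-invariance of the curvature term in the Weitzenb\"ock formula. Combined with $[\Delta, d^\ast] = 0$, the graded Jacobi identity then yields $[\Delta, \Ll_{\ast\varphi}] = 0$, which is the single identity driving the argument.

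For Part (1), one first verifies $\Ll_{\ast\varphi}(\Hh^i) = 0$: for harmonic $\alpha$, the first summand $d^\ast \alpha \wedge \ast\varphi$ vanishes trivially, and for the second, $\alpha \wedge \ast\varphi$ is again harmonic by $[\Delta, e_{\ast\varphi}] = 0$, so $d^\ast(\alpha \wedge \ast\varphi) = 0$ as well. Since $\Ll_{\ast\varphi}$ commutes with $\Delta$, it preserves each summand of the Hodge decomposition $\Om^i = \Hh^i \oplus d\Om^{i-1} \oplus d^\ast \Om^{i+1}$; combined with the annihilation of $\Hh^i$, this proves the decomposition of $H^i_{\ast\varphi}(M^7)$. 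For Part (2), I would derive an intertwining relation $\ast \,\Ll_{\ast\varphi} = \pm \Ll_{\ast\varphi} \,\ast$, which in dimension seven (where $\ast^2 = \mathrm{id}$) follows from $d^\ast = \pm \ast d\ast$ together with the self-duality of $\ast\varphi$; the Hodge-$\ast$ isomorphism then descends to cohomology.

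For Part (3), I would exploit the $G_2$-isotypic splittings $\Lambda^k = \bigoplus_j \Lambda^k_{d_j}$. In degree $0$, $\Ll_{\ast\varphi}(f) = -d^\ast(f \ast\varphi) = \pm \ast(df \wedge \varphi)$ vanishes iff $df = 0$, since $\alpha \mapsto \alpha \wedge \varphi$ is injective from $\Lambda^1$ into $\Lambda^4_7$, giving $H^0_{\ast\varphi} = \Hh^0$. In degree $7$, $\Ll_{\ast\varphi}|_{\Om^4 \to \Om^7}$ simplifies to $\alpha \mapsto \langle d^\ast\alpha, \varphi\rangle\,\vol$, and a $G_2$-type analysis of $d^\ast : \Om^4 \to \Om^3$ identifies its image with the zero-integral top forms, so $H^7_{\ast\varphi} \cong \R \cong \Hh^7$. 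The cases $i = 1, 6$ then follow by Part (2) duality from an analogous argument on $\Om^1$. For the middle degrees $i = 2, 3, 4, 5$, the strategy is to exhibit infinite-dimensional families of $\Ll_{\ast\varphi}$-closed forms lying in $d\Om^{i-1}$ or $d^\ast \Om^{i+1}$, parametrised by an arbitrary smooth function, and to verify that their cohomology classes remain linearly independent.

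For Part (4), I would identify $\Om^0(M^7, TM^7) = \X(M^7)$ and compute $\Ll_{\ast\varphi}$ on a vector field $X$ via the Fr\"olicher-Nijenhuis formalism; the resulting cocycle condition reduces to harmonicity of the dual $1$-form $X^\flat$, so $H^0_{\ast\varphi}(M^7, TM^7) \cong \Hh^1(M^7)$ has dimension $b^1$, which vanishes for full holonomy $G_2$ by Joyce's theorem. For Part (5), I would interpret $H^3_{\ast\varphi}(M^7, TM^7)$ as the space of infinitesimal torsion-free deformations of the $G_2$-structure modulo diffeomorphisms; the smoothness of the $b^3$-dimensional moduli space of torsion-free $G_2$-structures (Joyce) then supplies the claimed lower bound. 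The principal obstacle is Part (3) in the middle degrees: constructing explicit infinite-dimensional families requires a careful representation-theoretic analysis of $\Ll_{\ast\varphi}$ on the $G_2$-isotypic components and a delicate verification that the constructed classes remain linearly independent modulo a potentially large image.
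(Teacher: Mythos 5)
The central gap is Part (2) and, downstream of it, the middle-degree cases of Part (3). The claimed intertwining relation $\ast\,\Ll_{\ast\varphi} = \pm \Ll_{\ast\varphi}\,\ast$ cannot hold: $\Ll_{\ast\varphi}$ raises degree by $3$, so $\ast\,\Ll_{\ast\varphi}$ maps $\Om^l(M)$ to $\Om^{4-l}(M)$ while $\Ll_{\ast\varphi}\,\ast$ maps $\Om^l(M)$ to $\Om^{10-l}(M)$; moreover $\ast(\ast\varphi)=\varphi$, so $\ast\varphi$ is not self-dual in dimension $7$. The correct relation is $\ast\,\Ll_{\ast\varphi}\,\ast = \pm\,\Ll_{\ast\varphi}^\ast$ (the formal adjoint), so $\ast$ carries $\ker \Ll_{\ast\varphi}$ to $\ker \Ll_{\ast\varphi}^\ast$ and $\Im \Ll_{\ast\varphi}$ to $\Im \Ll_{\ast\varphi}^\ast$, not to the kernel and image of $\Ll_{\ast\varphi}$ itself; hence $\ast$ does not descend to cohomology by a chain-map argument. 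To obtain the isomorphism $H^i_{\ast\varphi}(M^7)\to H^{7-i}_{\ast\varphi}(M^7)$ one must first identify $H^i_{\ast\varphi}(M^7)$ with the space of $\Ll_{\ast\varphi}$-harmonic forms $\ker\Ll_{\ast\varphi}\cap\ker\Ll_{\ast\varphi}^\ast$, which \emph{is} $\ast$-invariant. That identification requires the splitting $\Om^l(M)=\ker\Ll_{\ast\varphi;l}^\ast\oplus\Im \Ll_{\ast\varphi;l}$ (the ``regularity'' of $\Ll_{\ast\varphi}$), which the paper establishes by computing the symbol of $\Ll_{\ast\varphi;l}$ and showing it is injective for $l\le 4$ and surjective for $l\ge 6$ (an explicit $G_2$-representation computation), with the case $l=5$ obtained by interpolation. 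Your proposal contains no substitute for this ellipticity input, and without it neither Part (2) nor the harmonic-representative description on which Part (3) rests goes through.

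Part (3) in degrees $2,3,4,5$ is announced but not carried out, as you acknowledge. The difficulty sits exactly where the missing Hodge theory is needed: producing infinite families of $\Ll_{\ast\varphi}$-closed exact or coexact forms is the easy half; showing they remain independent modulo $\Im\Ll_{\ast\varphi}$ is done in the paper by exhibiting them as $\Ll_{\ast\varphi}$-harmonic (hence $L^2$-orthogonal to the image) and then proving that spaces such as $\Om^{2;d^\ast}_{14}(M)$ and $\Om^{3;d^\ast}_{27}(M)$ are infinite dimensional, via overdetermined ellipticity of auxiliary operators like $\alpha^1\mapsto \pi^2_{14}(d\alpha^1)+d^\ast\alpha^1$ together with a rank-counting lemma for linear differential operators. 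Parts (1), (4) and (5) follow essentially the paper's route and are sound in outline, with two caveats: in (4) the equivalence $[X,\p_g(\ast\varphi)]^{FN}=0\iff \Ll_X\varphi=0$ rests on the nontrivial fact that the stabilizer of $\p_g(\ast\varphi)$ in $Gl^+(7,\R)$ is exactly $G_2$ (which the paper proves via the classification of intermediate subgroups), and in (3) the case $i=6$ leans on the unproven Part (2).
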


In fact, in Theorem \ref{thm:cohomG2} we shall give a precise description of the cohomology ring $H^\ast_{\ast \varphi}(M^7)$. Analogously, for $\Sp$-manifolds we have

\begin{theorem}\label{thm:hSp} Let $(M^8, \Phi)$ be a closed $\Sp$-manifold. Then for the cohomologies $H^i_{\Phi}(M^8)$ and $H^i_{\Phi}(M^8, TM^8)$ defined above, the following hold.
\begin{enumerate}
\item There is a Hodge decomposition
\begin{align*}
H^i_{\Phi}(M^8) =\;& \Hh^i(M^8) \oplus (H^i_{\Phi}(M^8) \cap d\Om^{i-1}(M^8))\\ & \oplus (H^i_{\Phi}(M^8) \cap d^\ast\Om^{i+1}(M^8)),
\end{align*}
where $\Hh^i(M^8)$ denotes the spaces of harmonic forms.
\item The Hodge-$\ast$ induces an isomorphism $\ast: H^i_{\Phi}(M^8) \to H^{8-i}_{\Phi}(M^8)$.
\item $H^i_{\Phi}(M^8) = \Hh^i(M^8)$ for $i = 0,1,2,6,7,8$. For $i = 3,4,5$, $H^i_\Phi(M^8)$ is infinite dimensional.
\item $\dim H^0_\Phi(M^8, TM^8) = b^1(M^8)$; in particular, $H^0_\Phi(M^8) = 0$ if $M^8$ has full holonomy $\Sp$.
\item $\dim H^3_\Phi(M^8, TM^8) \geq b^4_1(M^8) + b^4_7(M^8) + b^4_{35}(M^8) > 0$, as it contains all torsion free deformations of the $\Sp$-structure modulo deformations by diffeomorphisms. Here, $b^4_k(M)$ stands for the dimension of the space of harmonic forms in the parallel rank $k$-subbundle $\Lambda^4_k T^\ast M \subset \Lambda^4 T^\ast M$; cf. (\ref{eq:bkl}).
\end{enumerate}
\end{theorem}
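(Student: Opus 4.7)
The strategy mirrors that of Theorem \ref{thm:hG2}, exploiting throughout that $\Phi$ is parallel (so $d\Phi=d^\ast\Phi=0$ and the wedge operator $L_\Phi:=\cdot\wedge\Phi$ commutes with the Hodge Laplacian $\Delta$). Rewriting $\Ll_\Phi\alpha = [L_\Phi,d^\ast]\alpha$, a direct computation yields the anticommutation $\{d,\Ll_\Phi\}=0$ (since $L_\Phi\Delta=\Delta L_\Phi$ and $d\Phi=0$), and passing to the formal adjoint $\Ll_\Phi^\ast=[d,i_\Phi]$ one likewise obtains $\{d^\ast,\Ll_\Phi\}=0$. Consequently $\Ll_\Phi$ respects the standard $L^2$-orthogonal decomposition $\Om^i=\Hh^i\oplus d\Om^{i-1}\oplus d^\ast\Om^{i+1}$ and vanishes on $\Hh^i$, since for harmonic $\alpha$ the form $\alpha\wedge\Phi$ is again harmonic and hence $d^\ast(\alpha\wedge\Phi)=0$. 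Passing to cohomology summand by summand proves (1). For (2), the self-duality $\ast\Phi=\Phi$ together with the algebraic identity $\ast L_\Phi=\pm i_\Phi\ast$ gives the conjugation $\ast\Ll_\Phi\ast^{-1}=\pm\Ll_\Phi^\ast$, so the self-adjoint operator $\Delta_\Phi:=\Ll_\Phi\Ll_\Phi^\ast+\Ll_\Phi^\ast\Ll_\Phi$ is $\ast$-invariant; the Hodge identification $H^i_\Phi(M^8)\cong\ker\Delta_\Phi\cap\Om^i$ then yields the Hodge-$\ast$ isomorphism.

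For (3), the analysis proceeds by dissecting $L_\Phi\colon\Lambda^i\to\Lambda^{i+4}$ on the irreducible $\Sp$-isotypical components, using the splittings $\Lambda^2=\Lambda^2_7\oplus\Lambda^2_{21}$, $\Lambda^3=\Lambda^3_8\oplus\Lambda^3_{48}$, and $\Lambda^4=\Lambda^4_1\oplus\Lambda^4_7\oplus\Lambda^4_{27}\oplus\Lambda^4_{35}$. In degree $0$, $\Ll_\Phi f = -\ast(df\wedge\Phi)$, and Schur's lemma applied to $L_\Phi\colon\Lambda^1_8\to\Lambda^5_8$ (both irreducible of dimension $8$) forces $df=0$, so $H^0_\Phi=\Hh^0$; the cases $i=1,2$ are handled analogously, using injectivity of $L_\Phi$ on each non-trivial isotypical piece. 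For $i=3,4,5$, infinite-dimensionality is obtained by exhibiting an infinite family of $\Ll_\Phi$-closed forms with independent classes; concretely, because $\{d,\Ll_\Phi\}=0$, any smooth function $h\colon M\to\R$ yields an $\Ll_\Phi$-closed form built from $dh$ together with components on which $L_\Phi$ fails to be an isomorphism, and one shows these classes are linearly independent modulo $\Im\Ll_\Phi$. The dual cases $i=6,7,8$ then follow from (2).

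For (4), we extend $\Ll_\Phi$ to $\Om^\ast(M^8,TM^8)$ via the Fr\"olicher--Nijenhuis bracket with $\Phi$; a vector field $X\in\X(M^8)$ satisfies $\Ll_\Phi X=0$ precisely when $L_X\Phi=0$, i.e., $X$ generates an infinitesimal $\Sp$-automorphism. Since every closed $\Sp$-manifold is Ricci-flat, a Bochner argument forces such $X$ to be parallel, and parallel vector fields correspond bijectively to parallel $1$-forms, which coincide with the harmonic $1$-forms; hence $\dim H^0_\Phi(M^8,TM^8) = b^1(M^8)$, vanishing in the full-holonomy case because $\R^8$ has no non-zero $\Sp$-invariant vector. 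For (5), we construct an injection of the tangent space to the moduli of torsion-free $\Sp$-structures modulo diffeomorphisms into $H^3_\Phi(M^8,TM^8)$; by Joyce's deformation theory this tangent space is isomorphic to the space of harmonic self-dual $4$-forms, of dimension $b^4_1+b^4_7+b^4_{35}$. The main obstacles I anticipate are (a) the infinite-dimensionality assertion in (3), which requires producing a genuinely infinite-dimensional family of classes and verifying their independence modulo $\Im\Ll_\Phi$, and (b) the injectivity in (5), which needs a careful matching between diffeomorphism-equivalence on the $\Sp$-side and $\Ll_\Phi$-exactness on the $TM$-valued side via the Fr\"olicher--Nijenhuis formalism.
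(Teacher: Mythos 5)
Your overall architecture (anticommutation of $\Ll_\Phi$ with $d$ and $d^\ast$, decomposition into $\Sp$-isotypical components, Bochner for the $TM$-valued degree $0$, Joyce's deformation theory for degree $3$) matches the paper's, but there are genuine gaps. The central one is your appeal to ``the Hodge identification $H^i_\Phi(M^8)\cong\ker\Delta_\Phi\cap\Om^i$'': this is precisely the point the paper flags as \emph{not} automatic. Since $\Ll_{\Phi;l}\colon\Om^{l-3}(M)\to\Om^l(M)$ shifts degree by three, $\Delta_\Phi$ is not elliptic and standard Hodge theory does not apply; one needs the $l$-regularity condition (\ref{eq:condition-split}), which the paper establishes by computing the symbol $\alpha\mapsto(\imath_{\xi^\#}\Phi)\wedge\alpha$ and showing it is injective for $l\le 5$ and surjective for $l\ge 6$ (reducing, for $\xi=e^0$, to the behaviour of $\wedge\varphi$ on $\Lambda^\ast V_7^\ast$). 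Without this step parts (2) and (3) do not follow; part (1), by contrast, really does need only the anticommutation relations, as you say. Relatedly, your mechanism for infinite-dimensionality in degrees $3,4,5$ is off target: classes ``built from $dh$'' lie in the $\Om^3_8$-component and do not survive in $H^3_\Phi(M^8)\cong\Hh^3(M^8)\oplus\Om^{3;d^\ast}_{48}(M^8)$; the infinite-dimensional contribution is $\Om^{3;d^\ast}_{48}(M^8)$, which the paper handles via the rank count of Lemma \ref{lem:codim-inf} applied to $\alpha^2\mapsto\pi^3_{48}(d\alpha^2)$, using that $\Lambda^2 T^\ast M$ has rank $28<48$. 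You flagged this as an anticipated obstacle, but the construction you propose would not close it. Your treatment of $i=1,2$ by ``injectivity of $L_\Phi$ on isotypical pieces'' is also too coarse: $i=1$ needs the Killing/Bochner argument, and $i=2$ needs the identities $\alpha^2_7\wedge\Phi=3\ast\alpha^2_7$, $\alpha^2_{21}\wedge\Phi=-\ast\alpha^2_{21}$ to conclude that a coexact element of $\Hh^2_\Phi$ is harmonic and hence zero.

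For (4) and (5) two further points. The equivalence ``$ad_{\hat\Phi}X=0$ iff $\Ll_X\Phi=0$'' is not formal: it requires that $\Phi\mapsto\p_{g_\Phi}\Phi$ be an injective immersion, i.e.\ that the stabilizer of $\p_{g_\Phi}\Phi$ in $Gl^+(8,\R)$ is exactly $\Sp$, which the paper proves via the classification of intermediate subgroups between $\Sp$ and $Gl^+(8,\R)$. More seriously, in (5) you never explain why a \emph{torsion-free} infinitesimal deformation $\dot\Phi_0$ satisfies $ad_{P_{\Phi_0}}\bigl(d\frak{P}(\dot\Phi_0)\bigr)=0$; this is the heart of the matter and rests on the identity $[P_\Phi,P_\Phi]^{FN}=\tau(T)$ relating the Fr\"olicher--Nijenhuis square to the torsion (from \cite{KLS}), whose linearization at a torsion-free structure yields exactly the required closedness. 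Finally, a small slip: $b^4_1+b^4_7+b^4_{35}$ is not the dimension of the harmonic \emph{self-dual} $4$-forms ($\Lambda^4_{35}$ is the anti-self-dual summand); the count comes from Joyce's description of the moduli space, as you intend.
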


In fact, in Theorem \ref{thm:cohomSpin7} we shall give a precise description of the cohomology ring $H^\ast_\Phi(M^8)$.

In order to define these cohomologies, we utilize the {\em Fr\"olicher-Nijenhuis bracket $[\;,\; ]^{FN}$} which provides the graded space $\Om^\ast(M, TM)$ with the structure of a graded Lie algebra, acting on the graded algebra $\Om^\ast(M)$ by graded derivations, say, $\Ll_K: \Om^\ast(M) \to \Om^\ast(M)$ for $K \in \Om^\ast(M, TM)$, and on $\Om^\ast(M, TM)$ by the adjoint $ad_K$ \cite{FN1956}. If $\Psi^{2k}$ is a parallel form and $\hat \Psi := \p_g \Psi^{2k} \in \Om^{2k-1}(M, TM)$ is the contraction with the metric $g$, then $[\hat \Psi^{2k}, \hat \Psi^{2k}]^{FN} = 0$, which implies that $\Ll_{\hat \Psi^{2k}}$ (which is also denoted by $\Ll_{\Psi^{2k}}$) is a differential, so its cohomology can be defined as above.

If the underlying manifold is closed, then this cohomology has a Hodge decomposition and contains all harmonic forms, as will be shown in Theorems \ref{thm:hG2} and \ref{thm:hSp}. In fact, in this case the cohomology algebra $(H^\ast_\Psi, d)$ is {\em formally equivalent }to the deRham algebra $(\Om^\ast(M), d)$ (cf. Corollary \ref{cor:formal}) and hence by \cite{Sullivan} contains all information on the rational homotopy type of $M$. This is one of our motivations for studying this cohomology in detail.

If the differential $\Ll_\Psi$ satisfies some regularity condition, then we show that the Hodge-duality of the cohomology holds. As it turns out, this regularity condition is satisfied for the parallel $4$-forms on $G_2$- and $\Sp$-manifolds, respectively, allowing us to give the explicit descriptions of the cohomology in Theorems \ref{thm:hG2} and \ref{thm:hSp}.

Our paper is organized as follows. In Section \ref{sec:pre} we provide the basic definitions of graded Lie algebras and the Fr\"olicher-Nijenhuis bracket, and show that a parallel form of even degree on a Riemannian manifold yields a differential on both $\Om^\ast(M)$ and $\Om^\ast(M, TM)$ which has the properties asserted above for closed $M$. In Section \ref{sec:g2} we apply this to $G_2$-manifolds, giving explicit descriptions of the cohomology $H^\ast_{\ast \varphi}(\Om^\ast(M))$ and repeat this in Section \ref{sec:spin7} for $\Sp$-manifolds. In Section \ref{sec:deform} we discuss the relation of the cohomology groups $H^\ast(M, TM)$ and deformations, and show the results on the cohomologies $H^\ast_{\ast \varphi}(M^7, TM^7)$ and $H^\ast_\Phi(M^8, TM^8)$, respectively.
Finally, we outsourced some technical results into the appendix.

%%%%%%%%%%%%%%%%%%%%%%%%%%%%%%%%%%%%%%%%%%%%%%%%%%%%%%%%%
\section{Preliminaries}\label{sec:pre}
%%%%%%%%%%%%%%%%%%%%%%%%%%%%%%%%%%%%%%%%%%%%%%%%%%%%%%%%%
\subsection{The Hodge-$\ast$ operator}
%%%%%%%%%%%%%%%%%%%%%%%%%%%%%%%%%%%%%%%%%%%%%%%%%%%%%%%%%

Let $(V, g)$ be an $n$-dimensional oriented real vector space with a scalar product $g$ and volume form $\vol$. Then the {\em musical operators} allow us to identify $V$ and $V^\ast$ by
\[
\begin{array}{rrl}& \flat: & V \longrightarrow V^\ast, \qquad v \longmapsto v^\flat := g(v,\cdot)\\
\mbox{and}\\ & \#:= \flat^{-1}: & V^\ast \longrightarrow V, \qquad \alpha \longmapsto \alpha^\#.
\end{array}
\]

$g$ also induces an inner product on $\Lambda^k V^\ast$ in a canonical way, and the {\em Hodge-$\ast$ operator $\ast: \Lambda^k V^\ast \to \Lambda^{n-k} V^\ast$ }is the unique linear map satisfying the identity
\[
\alpha^k \wedge \beta^{n-k} = g(\ast \alpha^k, \beta^{n-k})\; \vol = g(\ast \alpha^k, \beta^{n-k}) \ast 1
\]
for all $\alpha^k \in \Lambda^k V^\ast$ and $\beta^{n-k} \in \Lambda^{n-k} V^\ast$. In particular,
\begin{equation} \label{prop-Hodge}
\ast^2|_{\Lambda^k V^\ast} = (-1)^{k(n-k)} Id_{\Lambda^k V^\ast} \qquad \mbox{and} \qquad g(\ast \alpha^k, \ast \beta^k) = g(\alpha^k, \beta^k).
\end{equation}
Moreover, for $\alpha^k \in \Lambda^k V^\ast$ and $v \in V$, we have the relation
\begin{equation} \label{eq:ast-contr}
\imath_v (\ast \alpha^k) = (-1)^k \ast (v^\flat \wedge \alpha^k) \qquad \mbox{and} \qquad \ast( \imath_v \alpha^k) = (-1)^{k+1} v^\flat \wedge \ast \alpha^k,
\end{equation} 
where $\imath_v$ denotes the insertion of the vector $v$ into the form $\alpha^k$.

Let $(M,g)$ be an $n$-dimensional oriented Riemannian manifold and let $\nabla$ denotes its Levi-Civita connection. Then $\nabla$ is compatible with $\ast$, i.e.,
\[
\ast (\nabla_v \alpha^k) = \nabla_v (\ast \alpha^k).
\]

A (local) oriented orthonormal frame $(e_i)$ with dual (local) coframe $(e^i) = (e_i^\flat)$ is called {\em normal in $p \in M$ }if $\nabla e_i|_p = 0$ or, equivalently, $\nabla e^i|_p = 0$ for all $i$. The existence of such a (co-)frame can be shown e.g. by choosing normal coordinates $(x^i)$ around $p$, so that $p \cong 0$ in these coordinates, and letting $e_i := {\sqrt g}^{ij} \p_j$, where $\p_j := \p/\p x_j$ denotes the coordinate vector field. If $(e_i)$ is normal in $p$ then the exterior derivative $d$ and its formal dual $d^\ast$ at $p$ are given as
\begin{equation} \label{d*-formula}
d\alpha^k|_p = e^i \wedge \nabla_{e_i} \alpha^k, \qquad
d^\ast\alpha^k|_p = (-1)^{n(n-k)+1} \ast d \ast \alpha^k = - \imath_{e_i} \nabla_{e_i} \alpha^k.
\end{equation}

Finally, for later reference, we also recall the map defined by contraction of a form with the metric $g$, i.e.
\begin{equation} \label{eq:def-partial}
\p = \p_g: \Lambda^k V^\ast \longrightarrow \Lambda^{k-1} V^\ast \otimes V, \qquad \p_g(\alpha^k) := (\imath_{e_i} \alpha^k) \otimes (e^i)^\#,
\end{equation}
where the sum is taken over some basis $(e_i)$ of $V$ with dual basis $(e^i)$.

%%%%%%%%%%%%%%%%%%%%%%%%%%%%%%%%%%%%%%%%%%%%%%%%%%%%%%%%%
\subsection{Graded Lie algebras and differentials}\label{sec:coh}
%%%%%%%%%%%%%%%%%%%%%%%%%%%%%%%%%%%%%%%%%%%%%%%%%%%%%%%%%

In this section, we briefly recall some basic notions and properties of graded algebras and graded Lie algebras. Let $V := (\bigoplus_{k \in \Z} V_k, \cdot)$ be a graded real vector space with a graded bilinear map $\cdot: V \times V \to V$, called a {\em product on $V$}. A {\em graded derivation of $(V, \cdot)$ of degree $l$ }
is a linear map $D^l: V \to V$ of degree $l$ (i.e., $D^l (V_k) \subset V_{k+l}$) such that 
\begin{equation} \label{eq:derivations}
D^l(x \cdot y) = (D^l x) \cdot y + (-1)^{l|x|} x \cdot (D^l y),
\end{equation}
where $|x|$ denotes the degree of an element, i.e. $|x| = k$ for $x \in V_k$.
If we denote by ${\mathcal D}^l(V)$ the graded derivations of $(V, \cdot)$ of degree $l$, then ${\mathcal D}(V) := \bigoplus_{l \in \Z} {\mathcal D}^l(V)$
 is a graded Lie algebra with the Lie bracket
\begin{equation} \label{eq:derivations-bracket}
{}[D_1, D_2] := D_1 D_2 - (-1)^{|D_1| |D_2|} D_2 D_1,
\end{equation}
i.e., the Lie bracket is graded anti-symmetric and satisfies the graded Jacobi identity,
\begin{eqnarray}
\label{GLA-skew} && [x, y] = -(-1)^{|x||y|} [y,x]\\
\label{GLA-Jac} && (-1)^{|x||z|} [x, [y,z]] + (-1)^{|y||x|} [y, [z,x]] + (-1)^{|z||y|} [z, [x,y]] = 0.
\end{eqnarray}

In general, if $L = (\bigoplus_{k \in \Z} L_k, [\cdot, \cdot])$ is a graded Lie algebra, then an {\em action of $L$ on $V$ }is a Lie algebra homomorphism $\pi: L \to {\mathcal D}(V)$, which yields a graded bilinear map $L \times V \to V$, $(x, v) \mapsto \pi(x)(v)$ such that the map $\pi(x): V \to V$ is a graded derivation of degree $|x|$ and such that
\[
{}[\pi(x), \pi(y)] = \pi[x,y].
\]
For instance, a graded Lie algebra acts on itself via the adjoint representation $ad: L \to {\mathcal D}(L)$, where $ad_x(y) := [x,y]$.

For a graded Lie algebra $L$ we define the set of {\em Maurer-Cartan elements of $L$ of degree $2k+1$ }as
\[
{\mathcal {MC}}^{2k+1}(L) := \{ \xi \in L_{2k+1} \mid [\xi,\xi] = 0\}.
\]
If $\pi: L \to {\mathcal D}(V)$ is an action of $L$ on $(V, \cdot)$, then for $\xi \in {\mathcal {MC}}^{2k+1}(L)$ we have $0 = [\pi(\xi), \pi(\xi)] = 2 \pi(\xi)^2$, so that $\pi(\xi): V \to V$ is a differential on $V$. We define the {\em cohomology of $(V, \cdot)$ w.r.t. $\xi$ }as
\begin{equation} \label{eq:cohom-GA}
H^i_\xi(V): = 
\frac{\ker (\pi(\xi): V_i \to V_{i+2k+1})}{\Im (\pi(\xi): V_{i-(2k+1)} \to V_i)} \qquad \mbox{for $\xi \in {\mathcal {MC}}^{2k+1}(L)$.}
\end{equation}
Since $\pi(\xi)$ is a derivation, it follows that $\ker \pi(\xi) \cdot \ker \pi(\xi) \subset \ker \pi(\xi)$, whence there is an induced product on $H^\ast_\xi(V) := \bigoplus_{i \in \Z} H^i_\xi(V)$.

If $L = \bigoplus_{k \in \Z} L_k$ is a graded Lie algebra, then for $v \in L_0$ and $t \in \R$, we define the formal power series
\begin{equation} \label{eq:formal-exp}
\exp(tv): L \longrightarrow L[[t]], \qquad \exp(tv)(x) := \sum_{k=0}^\infty \dfrac{t^k}{k!} ad_v^k(x).
\end{equation}
Observe that $ad_\xi(v) = 0$ for some $v \in L_0$ iff $ad_v(\xi) = 0$ iff $\exp(tv)(\xi) = \xi$ for all $t \in \R$. In this case, we call $v$ an {\em infinitesimal stabilizer of $\xi$}.

For $\xi \in {\mathcal {MC}}^{2k+1}(L)$, we say that $x \in L_{2k+1}$ is an {\em infinitesimal deformation of $\xi$ within ${\mathcal {MC}}^{2k+1}(L)$ }if $[\xi + t x, \xi + t x] = 0 \mod t^2$. Evidently, this is equivalent to $[\xi, x] = 0$ or $x \in \ker ad_\xi$. Such an infinitesimal deformation is called {\em trivial} if $x = [\xi, v]$ for some $v \in L_0$, since in this case, $\xi + t x = \exp(-tv)(\xi) \mod t^2$, whence up to second order, it coincides with elements in the orbit of $\xi$ under the (formal) action of $\exp(tv)$.
Thus, we have the following interpretation of some cohomology groups.

\begin{proposition} \label{prop:homology-general}
Let $(L = \bigoplus_{i \in \Z} L_i, [\cdot, \cdot])$ be a real graded Lie algebra, acting on itself by the adjoint representation, and let $\xi \in {\mathcal {MC}}^{2k+1}(L)$. Then the following holds.
\begin{enumerate}
\item If $L_{-(2k+1)} = 0$, then $H_\xi^0(L)$ is the Lie algebra of infinitesimal stabilizers of $\xi$.
\item $H_\xi^{2k+1}(L)$ is the space of infinitesimal deformations of $\xi$ within \linebreak ${\mathcal {MC}}^{2k+1}(L)$ modulo trivial deformations.
\end{enumerate}
\end{proposition}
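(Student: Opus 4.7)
The proposition is essentially a dictionary between the cohomological definitions (\ref{eq:cohom-GA}) and the geometric notions of infinitesimal stabilizers and deformations, so the plan is to unwind definitions and track graded signs; no substantive obstacle is anticipated.

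For part (1), the differential on $L$ in this setting is $ad_\xi$, of degree $|\xi| = 2k+1$, and so
\[
H^0_\xi(L) = \frac{\ker(ad_\xi\colon L_0 \to L_{2k+1})}{\Im(ad_\xi\colon L_{-(2k+1)} \to L_0)}.
\]
The hypothesis $L_{-(2k+1)} = 0$ kills the denominator, and $v \in \ker(ad_\xi|_{L_0})$ means $[\xi,v] = 0$, which by the remark preceding the proposition is precisely the condition that $v$ be an infinitesimal stabilizer of $\xi$. To justify calling this a Lie algebra, I would check that this kernel is closed under $[\cdot,\cdot]$, which follows from $ad_\xi$ being a derivation (an immediate consequence of the graded Jacobi identity (\ref{GLA-Jac})): for $v_1, v_2 \in \ker(ad_\xi|_{L_0})$, $ad_\xi[v_1,v_2] = [ad_\xi v_1, v_2] + [v_1, ad_\xi v_2] = 0$, the graded signs being harmless since $|v_i| = 0$.

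For part (2), the key computation is the expansion
\[
[\xi+tx,\xi+tx] = [\xi,\xi] + t\,([\xi,x] + [x,\xi]) + t^2\,[x,x].
\]
Applying graded antisymmetry (\ref{GLA-skew}) with $|\xi|=|x|=2k+1$, the sign $(-1)^{(2k+1)^2} = -1$ gives $[x,\xi] = [\xi,x]$, so the linear term equals $2t[\xi,x]$. Since $[\xi,\xi] = 0$, the mod-$t^2$ Maurer-Cartan condition reduces to $[\xi,x] = 0$, i.e. $x \in \ker(ad_\xi\colon L_{2k+1} \to L_{4k+2})$. Trivial deformations are by definition $x = [\xi,v] = ad_\xi v$ with $v \in L_0$, which form exactly $\Im(ad_\xi\colon L_0 \to L_{2k+1})$; the identification with the tangent vector to the formal orbit $\exp(-tv)(\xi) = \xi + t[\xi,v] + O(t^2)$ follows by expanding (\ref{eq:formal-exp}) to first order together with one further application of graded antisymmetry. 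The quotient is then exactly $H^{2k+1}_\xi(L)$, as required.

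The only step requiring real attention is the sign check $[\xi,x] = [x,\xi]$ in odd-odd degree; this is what produces the factor of $2$ in the linear term and hence makes $[\xi,x]=0$ the correct linearized Maurer-Cartan equation.
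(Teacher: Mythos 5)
Your proof is correct and takes essentially the same route as the paper, which presents the proposition as an immediate consequence of the definitional discussion preceding it (the equivalence of $[\xi+tx,\xi+tx]=0 \bmod t^2$ with $[\xi,x]=0$, and the identification of trivial deformations with $\Im(ad_\xi\colon L_0\to L_{2k+1})$ via $\exp(-tv)(\xi)=\xi+t[\xi,v] \bmod t^2$). Your explicit sign checks and the verification that $\ker(ad_\xi|_{L_0})$ is closed under the bracket are accurate and merely spell out what the paper leaves implicit.
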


%%%%%%%%%%%%%%%%%%%%%%%%%%%%%%%%%%%%%%%%%%%%%%%%%%%%%%%%%
\subsection{The Fr\"olicher-Nijenhuis bracket}\label{subs:fnb}
%%%%%%%%%%%%%%%%%%%%%%%%%%%%%%%%%%%%%%%%%%%%%%%%%%%%%%%%%

We shall apply our discussion from the preceding section to the following example. Let $M$ be a manifold and $(\Om^\ast(M), \wedge) = (\bigoplus_{k \geq 0} \Om^k(M), \wedge)$ be the graded algebra of differential forms. Evidently, the exterior derivative $d: \Om^k(M) \to \Om^{k+1}(M)$ is a derivation of $\Om^\ast(M)$ of degree $1$, whereas insertion $\imath_X: \Om^k(M) \to \Om^{k-1}(M)$ of a vector field $X \in {\frak X}(M)$ is a derivation of degree $-1$.

More generally, for $K \in \Om^k(M, TM)$ we define $\imath_K \alpha^l$ as the {\em insertion of $K$ into $\alpha^l \in \Om^l(M)$ }pointwise by
\[
\imath_{\kappa^k \otimes X} \alpha^l := \kappa^k \wedge (\imath_X \alpha^l) \in \Om^{k+l-1}(M),
\]
where $\kappa^k \in \Om^k(M)$ and $X \in {\frak X}(M)$, and this is a derivation of $\Om^\ast(M)$ of degree $k-1$. Thus, the {\em Nijenhuis-Lie derivative along $K \in \Om^k(M, TM)$ }defined as
\begin{equation} \label{eq:LK-deriv}
\Ll_K (\alpha^l) := [\imath_K, d] (\alpha^l) = \imath_K (d\alpha^l) + (-1)^k d(\imath_K \alpha^l) \in \Om^{k+l}(M)
\end{equation}
is a derivation of $\Om^\ast(M)$ of degree $k$, and evidently,
\begin{equation} \label{eq:L-d-commute}
\Ll_K(d\alpha) = (-1)^k d\Ll_K(\alpha).
\end{equation}
Moreover, if $K = \kappa_i^k \otimes e_i$ w.r.t. some local frame $(e_i)$, then
\begin{equation} \label{eq:Lie-2}
\Ll_K (\alpha^l) := \kappa_i^k \wedge \Ll_{e_i} \alpha^l + (-1)^k  d\kappa_i^k \wedge \imath_{e_i} \alpha^l.
\end{equation}

Observe that for $k = 0$ in which case $K \in \Om^0(M, TM)$ is a vector field, both $\imath_K$ and $\Ll_K$ coincide with the standard notion of insertion of and Lie derivative along a vector field.

In \cite{FN1956} \cite{FN1956b}, it was shown that $\Om^\ast(M, TM)$ carries a unique structure of a graded Lie algebra, defined by the so-called {\em Fr\"olicher-Nijenhuis bracket},
\[
[\cdot, \cdot]^{FN}:  \Om^k (M, TM) \times \Om^l (M, TM) \to \Om^{k+l} (M, TM)
\]
such that $\Ll$ defines an action of $\Om^\ast(M, TM)$ on $\Om^\ast(M)$, that is,
\begin{equation} \label{eq:FN-homom}
\Ll_{[K_1, K_2]^{FN}} = [\Ll_{K_1}, \Ll_{K_2}] = \Ll_{K_1} \circ \Ll_{K_2} - (-1)^{|K_1||K_2|} \Ll_{K_2} \circ \Ll_{K_1}.
\end{equation}
It is given by the following formula for $\alpha^k \in \Om^k(M)$, $\beta^l \in\Om ^l (M)$, $X_1, X_2 \in \X (M)$ \cite[Theorem 8.7 (6), p. 70]{KMS1993}:
\begin{align}
\nonumber [\alpha^k \otimes  X_1, &\beta^l \otimes  X_2]^{FN} = \alpha^k \wedge \beta^l \otimes [ X_1,  X_2]\nonumber \\
& + \alpha^k \wedge \Ll_{X_1} \beta^l \otimes X_2 
- \Ll_{X_2} \alpha^k \wedge \beta^l \otimes X_1 \label{eq:kms}\\ 
&+ (-1)^{k} \left( d \alpha^k \wedge (\imath_{X_1} \beta^l) \otimes X_2 
+ (\imath_{X_2} \alpha^k) \wedge d \beta^l \otimes X_1 \right).
\nonumber
\end{align}
In particular, for a vector field $X \in  \X(M)$ and $K \in \Om^\ast(M, TM)$ we have \cite[Theorem 8.16 (5), p. 75]{KMS1993}
\begin{align*}
\Ll_X (K) = [X, K] ^{FN},
\end{align*}
that is, the Fr\"olicher-Nijenhuis bracket with a vector field coincides with the Lie derivative of the tensor field $K \in \Om^\ast(M, TM)$. This means that $\exp(tX): \Om^\ast(M, TM) \to \Om^\ast(M, TM)[[t]]$ is the action induced by (local) diffeomorphisms of $M$. Thus, Proposition \ref{prop:homology-general} now immediately implies the following result.

\begin{theorem} \label{thm:FN-cohomology}
Let $M$ be a manifold and $K \in \Om^{2k+1}(M, TM)$ be such that $[K,K]^{FN}= 0$, and define the differential $d_K(K') := [K, K']^{FN}$. Then
\begin{enumerate}
\item $H_K^0(\Om^\ast(M, TM))$ is the Lie algebra of vector fields stabilizing $K$.
\item $H_K^{2k+1}(\Om^\ast(M, TM))$ is the space of infinitesimal deformations of $K$ within the differentials of $\Om^\ast(M, TM)$ of the form $ad_{\xi^{2k+1}}$, modulo (local) diffeomorphisms.
\end{enumerate}
\end{theorem}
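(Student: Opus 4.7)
The strategy is to read off both statements directly from Proposition~\ref{prop:homology-general}, applied to the graded Lie algebra $L := (\Om^\ast(M, TM), [\cdot,\cdot]^{FN})$ with Maurer--Cartan element $\xi := K \in {\mathcal{MC}}^{2k+1}(L)$; the hypothesis $[K,K]^{FN} = 0$ is precisely the Maurer--Cartan equation. Letting $L$ act on itself by the adjoint representation, the induced differential is $ad_K = d_K$, and its cohomology is by definition $H^\ast_K(\Om^\ast(M, TM))$. So the whole statement reduces to interpreting the two conclusions of Proposition~\ref{prop:homology-general} in geometric terms specific to the Fr\"olicher--Nijenhuis setup.

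For part (1), note that the grading on $\Om^\ast(M, TM)$ is non-negative, so $L_{-(2k+1)} = 0$ since $2k+1 \geq 1$. Hence the hypothesis of Proposition~\ref{prop:homology-general}(1) holds and $H^0_K(L)$ is the Lie algebra of infinitesimal stabilizers of $K$ inside $L_0 = \X(M)$. For a vector field $X$, the identity $\Ll_X K = [X,K]^{FN}$ recorded just before the theorem, taken from \cite[Thm.~8.16(5)]{KMS1993}, shows that $ad_X(K)$ coincides with the ordinary Lie derivative of the tensor field $K$. Therefore $X \in H^0_K(L)$ if and only if $X$ preserves $K$ in the usual geometric sense, giving (1).

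For part (2), Proposition~\ref{prop:homology-general}(2) identifies $H_K^{2k+1}(L)$ with the space of $K' \in \Om^{2k+1}(M, TM)$ satisfying $[K,K']^{FN} = 0$, modulo elements of the form $[K,v]^{FN}$ with $v \in \X(M)$. The cocycle condition is exactly the first-order requirement that $K + tK'$ remain in ${\mathcal{MC}}^{2k+1}(L)$, equivalently that $ad_{K+tK'}$ remain a differential modulo $t^2$, so it describes infinitesimal deformations of the differential $d_K = ad_K$. A trivial deformation $K' = [K,v]^{FN} = \Ll_v K$ is, via the formal exponential \eqref{eq:formal-exp}, the derivative at $t=0$ of the orbit of $K$ under $\exp(tv)$; because $\exp(tv)$ acts on tensor fields by the flow of $v$, these orbits are precisely the orbits of $K$ under (local) diffeomorphisms of $M$. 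Modding out yields the statement.

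The main obstacle is conceptual rather than computational: one has to be confident that the formal action $\exp(tv)$ on $\Om^\ast(M, TM)$ produced algebraically from the Fr\"olicher--Nijenhuis bracket coincides with the geometric action by pull-back along the flow of $v$, so that trivial deformations really are the diffeomorphism-orbit tangent directions. This reduces to the identity $\Ll_v K = [v, K]^{FN}$, which is exactly the fact quoted before the theorem from \cite{KMS1993}. Once this identification is in hand, both parts of the theorem are immediate corollaries of Proposition~\ref{prop:homology-general}.
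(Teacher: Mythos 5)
Your proposal is correct and matches the paper's own argument: the paper also deduces the theorem directly from Proposition~\ref{prop:homology-general} applied to the adjoint action of $(\Om^\ast(M,TM),[\cdot,\cdot]^{FN})$ on itself, using the identity $\Ll_X K = [X,K]^{FN}$ to identify infinitesimal stabilizers with vector fields preserving $K$ and trivial deformations with the action of (local) diffeomorphisms. No gaps.
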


%%%%%%%%%%%%%%%%%%%%%%%%%%%%%%%%%%%%%%%%%%%%%%%%%%%%%%%%%
\subsection{Riemannian manifolds with parallel forms} \label{subs:parallel}
%%%%%%%%%%%%%%%%%%%%%%%%%%%%%%%%%%%%%%%%%%%%%%%%%%%%%%%%%

Suppose that $(M,g)$ is an $n$-dimensional Riemannian manifold with Levi-Civita connection $\nabla$, and suppose that $\Psi$ is a parallel form of even degree. We now make the following simple but crucial observation.

\begin{proposition} \label{prop:FN}
Let $(M, g)$ be a Riemannian manifold and $\Psi \in \Om^{2k}(M)$ be a parallel form of even degree, and let $\hat \Psi := \p_g \Psi \in \Om^{2k-1}(M, TM)$ with the contraction map $\p_g$ from (\ref{eq:def-partial}).

Then $\hat \Psi$ is a Maurer-Cartan element, i.e., $[\hat \Psi, \hat \Psi]^{FN} = 0$.
\end{proposition}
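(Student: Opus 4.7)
The plan is to verify $[\hat\Psi,\hat\Psi]^{FN}=0$ pointwise at an arbitrary $p\in M$, by expanding the bracket via formula (\ref{eq:kms}) in a local orthonormal frame that is normal at $p$. The intuition is that $\hat\Psi$, being built from the parallel tensors $\Psi$ and $g$, is itself parallel; and since every term in (\ref{eq:kms}) is a first-order differential expression in its arguments, all such terms should vanish at $p$ once we work in a frame in which the relevant covariant derivatives vanish at $p$.

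Concretely, fix $p$ and choose a local oriented orthonormal frame $(e_i)$ normal at $p$, so $\nabla e_i|_p=0$. Setting $\psi_i := \imath_{e_i}\Psi\in\Om^{2k-1}(M)$, one has $\hat\Psi = \sum_i \psi_i\otimes e_i$ on the domain of the frame, so bilinearity of the Fr\"olicher--Nijenhuis bracket gives
\begin{equation*}
[\hat\Psi,\hat\Psi]^{FN}\big|_p = \sum_{i,j}[\psi_i\otimes e_i,\;\psi_j\otimes e_j]^{FN}\big|_p.
\end{equation*}
Applying (\ref{eq:kms}) (with the role of the first-factor degree played by $2k-1$, so that the sign factor is $(-1)^{2k-1}=-1$) expands each summand into five terms, respectively involving $[e_i,e_j]$, $\Ll_{e_i}\psi_j$, $\Ll_{e_j}\psi_i$, $d\psi_i$, and $d\psi_j$.

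The core of the proof is then to check that each of these five expressions vanishes at $p$. Since $\nabla\Psi=0$ and $\nabla e_i|_p=0$, the Leibniz rule yields $\nabla_X\psi_j|_p=0$ and, more generally, $\nabla_X(\imath_{e_i}\imath_{e_j}\Psi)|_p=0$ for every $X\in T_p M$. Combined with the identity $d\omega|_p = e^\ell\wedge \nabla_{e_\ell}\omega|_p$ valid in a normal frame (cf.\ (\ref{d*-formula})) and Cartan's formula $\Ll_{e_i}\omega = \imath_{e_i}d\omega + d(\imath_{e_i}\omega)$, this gives $[e_i,e_j]|_p=0$, $d\psi_i|_p=0$, and $\Ll_{e_i}\psi_j|_p=0$. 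All five terms therefore vanish at $p$, and since $p$ was arbitrary we conclude $[\hat\Psi,\hat\Psi]^{FN}\equiv 0$.

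I do not foresee a serious conceptual obstacle; the argument merely exhibits the general principle that the Fr\"olicher--Nijenhuis self-bracket of any parallel vector-valued form vanishes, because the right-hand side of (\ref{eq:kms}) involves only first-order derivatives of the factors, which can be forced to vanish at any preassigned point by an appropriate choice of frame. The only care needed is bookkeeping of the signs in (\ref{eq:kms}); one should also note that the evenness of $\deg\Psi$ is used only to the extent that it makes $\hat\Psi$ of odd degree, so that $[\hat\Psi,\hat\Psi]^{FN}=0$ is a genuine condition rather than a tautology from graded skew-symmetry.
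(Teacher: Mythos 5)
Your proof is correct and follows essentially the same route as the paper: expand $[\hat\Psi,\hat\Psi]^{FN}$ via (\ref{eq:kms}) and check that every term vanishes at an arbitrary point $p$ using a frame adapted at $p$ together with $\nabla\Psi=0$. The only cosmetic difference is that you use an orthonormal frame normal at $p$ (so $[e_i,e_j]|_p=0$ follows from torsion-freeness) where the paper uses geodesic normal coordinates (so $[\p_i,\p_j]=0$ identically); both work equally well.
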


\begin{proof} Choose geodesic normal coordinates $(x^i)$ around $p \in M$ in such a way that $(\p_i)_p := (\p / \p x^i)_p$ is an orthonormal basis of $T_pM$.
The dual basis of $\p_i$ is $dx^i$, whence $(dx^i)^\# = g^{ij} \p_j$. Thus,
\[
\hat \Psi = (\imath_{\p_i} \Psi) \otimes (dx^i)^\# = g^{ij} (\imath_{\p_i} \Psi) \otimes \p_j.
\]
Thus, by (\ref{eq:kms})
\begin{align*}
{}[\hat \Psi,  \hat \Psi]^{FN} = 
&[g^{ij} (\imath_{\p_i} \Psi) \otimes \p_j, g^{rs} (\imath_{\p_r} \Psi) \otimes \p_s]^{FN}\\
=  
&\left(
g^{ij} (\imath_{\p_i} \Psi) \wedge \Ll_{\p_j} (g^{rs} (\imath_{\p_r} \Psi))) \otimes \p_s \right.\\
&-
\Ll_{\p_s} (g^{ij} (\imath_{\p_i} \Psi)) \wedge g^{rs} (\imath_{\p_r} \Psi) \otimes \p_j \\
&- d (g^{ij} (\imath_{\p_i} \Psi)) \wedge \imath_{\p_j} (g^{rs} (\imath_{\p_r} \Psi)) \otimes \p_s \\
&- (\imath_{\p_s}(g^{ij} (\imath_{\p_i} \Psi)) \wedge d(g^{rs} (\imath_{\p_r} \Psi))) \otimes \p_j.
\end{align*}
Since at $p$, $g_{ij} = g^{ij} = \delta_{ij}, \p_r g_{ij} = 0$,
$\Ll_{\p_j} \Psi = \nabla_{e_j} \Psi = 0$, $\nabla_{\p_i} \p_j = 0$, and $\p_j = (e^j)^\#$, it follows that $[\hat \Psi, \hat \Psi]^{FN}_p = 0$, and $p \in M$ was arbitrary.
\end{proof}

Thus, by the discussion in Section \ref{sec:coh}, the Lie derivative $\Ll_{\hat \Psi}$ and the adjoint map $ad_{\hat \Psi}$ are differentials on $\Om^\ast(M)$ and $\Om^\ast(M, TM)$, respectively, and for simplicity, we shall denote these by
\[
\Ll_\Psi: \Om^\ast(M) \longrightarrow \Om^\ast(M) \qquad \mbox{and} \qquad 
ad_\Psi: \Om^\ast(M, TM) \longrightarrow \Om^\ast(M, TM),
\]
or, if we wish to specify the degree,
\begin{equation} \label{eq:definf-Lpsil}
\begin{array}{lllll}
\Ll_{\Psi; l}: & \Om^{l-2k+1}(M) & \longrightarrow & \Om^l(M) & \mbox{and}\\[2mm]
ad_{\Psi; l}: & \Om^{l-2k+1}(M, TM) & \longrightarrow & \Om^l(M, TM).
\end{array}
\end{equation}
The cohomology algebras we denote by $H_\Psi^\ast(M)$ and $H_\Psi^\ast(TM)$ instead of $H_{\hat \Psi}^\ast(\Om^\ast(M))$ and $H_{\hat \Psi}^\ast(\Om^\ast(M, TM))$, respectively. That is,
\begin{equation}
\label{eq:def-HKlM}
H_\Psi^l(M) = \dfrac{ \ker (\Ll_{\Psi; l + 2k-1})}{\Im (\Ll_{\Psi;l})} \quad \mbox{and} \quad 
H_\Psi^l(TM) = \dfrac{ \ker (\ad_{\Psi; l + 2k - 1})}{\Im (ad_{\Psi; l})}.
\end{equation}
Observe that a priori we do not have any well behaved topology on $H_\Psi^l(M)$ and $H_\Psi^l(TM)$, since it is not clear if the denominators in (\ref{eq:def-HKlM}) are closed subspaces of the respective numerator in the Frech\'et topology.

We assert that
\begin{equation}\label{eq:LK-d*}
\Ll_\Psi \alpha^l = (d^\ast\alpha^l) \wedge \Psi - d^\ast(\alpha^l \wedge \Psi).
\end{equation}
In order to see this, observe that just as in the proof of Proposition \ref{prop:FN} for an orthonormal frame $(e_i)$ which is normal at $p \in M$ (\ref{eq:Lie-2}) implies that
\begin{equation}\label{eq:LK-nabla}
\Ll_\Psi \alpha^l|_p = (\imath_{e_i} \Psi) \wedge \nabla_{e_i} \alpha^l|_p,
\end{equation}
and since $\nabla \Psi = 0$ we have
\begin{align*}
(\imath_{e_i} \Psi) \wedge \nabla_{e_i} \alpha^l|_p = \imath_{e_i} \nabla_{e_i} (\Psi \wedge \alpha^l)|_p - \Psi \wedge (\imath_{e_i} \nabla_{e_i} \alpha^l)|_p,
\end{align*}
from which (\ref{eq:LK-d*}) follows at $p \in M$ by (\ref{d*-formula}).
In particular, (\ref{eq:LK-d*}) together with (\ref{eq:L-d-commute}) implies
\begin{equation} \label{eq:LK-commutes}
\Ll_\Psi d\alpha^l = -d \Ll_\Psi \alpha^l \qquad \mbox{and} \qquad \Ll_\Psi d^\ast \alpha^l = - d^\ast \Ll_\Psi \alpha^l
\end{equation}
and therefore,
\begin{equation} \label{eq:LK-comm-Laplace}
\Ll_\Psi \triangle \alpha^l = \triangle \Ll_\Psi \alpha^l.
\end{equation}

By the Weitzenb\"ock formula (see e.g. \cite[(1.154)]{Besse1987}), it follows that exterior multiplication with a parallel form commutes with the Laplacian. That is, we have for all $\alpha \in \Om^\ast(M)$
\begin{equation} \label{eq:Laplace-commute}
\triangle (\alpha \wedge \Psi) = (\triangle \alpha) \wedge \Psi \qquad \mbox{and} \qquad \triangle (\alpha \wedge \ast \Psi) = (\triangle \alpha) \wedge \ast \Psi.
\end{equation}

\begin{example} \label{ex:Kaehler}
Let $(M,J,g)$ be a K\"ahler manifold with K\"ahler form $\om$. Then $\om$ is parallel, and the map $L: \Om^\ast(M) \to \Om^{\ast+2}(M)$, $\alpha \mapsto \alpha \wedge \om$ is called the {\em Lefschetz map}. In this case, $\Ll_\om = [L, d^\ast] = d^c = i (\bar{\p} - \p)$ is the {\em complex differential}, where $d = \p + \bar{\p}$ is the decomposition of the exterior differential into its holomorphic and anti-holomorphic part \cite[(4.1),(4.2)]{FN1956b}. 
Likewise, a straightforward calculation using (\ref{eq:kms}) shows that the adjoint map $ad_\om: \Om^\ast(M, TM) \to \Om^\ast(M, TM)$ coincides with the {\em Dolbeault differential} $\bar \p: \Om^{p,q}(M, TM) \to \Om^{p, q+1}(M, TM)$ \cite[Lemma 4]{FN1956b}. 
In particular, the cohomology algebras $H^\ast_\om(M)$ and $H^\ast_\om(M, TM)$ are finite dimensional if $M$ is closed.
\end{example}

Recall that for a closed oriented Riemannian manifold $(M,g)$ there is the {\em Hodge decomposition }of differential forms
\begin{equation} \label{Hodge-dec}
\Om^l(M) = \Hh^l(M) \oplus d\Om^{l-1} (M) \oplus d^\ast\Om^{l+1} (M),
\end{equation}
where $\Hh^l(M) \subset \Om^l(M)$ denotes the space of harmonic forms. Moreover $\dim \Hh^l(M) = \dim H^l_{dR}(M) = b^l(M)$ is a topological invariant.

Let us now compute the formal adjoint of $\Ll_\Psi$.

\begin{proposition}\label{prop:formal-adjointLK}
Let $(M, g)$, $\Psi \in \Om^{2k}(M)$ and $\hat \Psi := \p_g \Psi \in \Om^{2k-1}(M,TM)$ be as before. Then the formal adjoint
$\Ll_{\Psi;l}^\ast: \Om^l (M) \rightarrow \Om^{l-2k+1}(M)$ 
of 
$\Ll_{\Psi;l}: \Om^{l-2k+1}(M) \rightarrow \Om^l (M)$ 
is given by 
\begin{equation}\label{eq:formal-adjointLK}
\Ll_{\Psi;l}^\ast \alpha^l = (-1)^{n(n-l) + 1} \ast \Ll_{\Psi} \ast \alpha^l
\end{equation}
\end{proposition}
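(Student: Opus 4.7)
The plan is to factor $\Ll_\Psi$ through a graded commutator of operators whose $L^2$-adjoints and Hodge conjugates are elementary, and then to pin down the final sign by a parity argument. By (\ref{eq:LK-d*}),
\[
\Ll_\Psi = L_\Psi \circ d^\ast - d^\ast \circ L_\Psi = [L_\Psi, d^\ast],
\]
where $L_\Psi(\alpha) := \alpha \wedge \Psi$; because $|L_\Psi| = 2k$ is even, no sign appears in the graded commutator. I would then take the $L^2$-adjoint termwise, using $(d^\ast)^\ast = d$ and writing $\Lambda_\Psi := L_\Psi^\ast$, to obtain
\[
\Ll_\Psi^\ast = d \circ \Lambda_\Psi - \Lambda_\Psi \circ d,
\]
so it remains to match this, on $\Om^l(M)$, with $(-1)^{n(n-l)+1} \ast \Ll_\Psi \ast$.

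For the matching I would invoke two Hodge-star conjugation formulas. First, (\ref{d*-formula}) together with (\ref{prop-Hodge}) yields $d = (-1)^{n(p+1)} \ast d^\ast \ast$ on $p$-forms, which rewrites the outer $d$ in $d\Lambda_\Psi$ as $\ast d^\ast \ast$. Second, the identity $\langle \Psi \wedge \alpha, \beta\rangle\,\vol = \alpha \wedge (\Psi \wedge \ast \beta)$, which carries no sign because $|\Psi|$ is even, rearranges to $\Lambda_\Psi = \ast^{-1} \circ L_\Psi \circ \ast$; this converts each $\Lambda_\Psi$ into $\ast L_\Psi \ast$ up to an $\ast \ast$ factor. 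Substituting both identities into $d \Lambda_\Psi - \Lambda_\Psi d$ produces exactly $\pm \ast (L_\Psi d^\ast - d^\ast L_\Psi) \ast = \pm \ast \Ll_\Psi \ast$, where the overall sign is still to be determined.

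The hard part will be the sign bookkeeping: the four intermediate forms that appear have degrees $l$, $n-l$, $l\pm 1$, and $l \pm 2k$, so each collapse $\ast\ast = (-1)^{p(n-p)}$ and each Hodge-interchange contributes a different exponent. Using that $2k$ is even and that $p(p+1)$ is always even, I expect the coefficient of $\ast L_\Psi d^\ast \ast$ and the coefficient of $\ast d^\ast L_\Psi \ast$ in $d \Lambda_\Psi - \Lambda_\Psi d$ to both collapse to the common sign $(-1)^{n(n-l)+1}$, proving (\ref{eq:formal-adjointLK}). A more pedestrian alternative would be to start from the $L^2$-pairing $\int_M (\Ll_\Psi \alpha) \wedge \ast \beta$, apply (\ref{eq:LK-d*}) and Stokes' theorem to transfer $d^\ast$ onto the other factor, and then rewrite the result via $\ast$; this avoids the commutator formalism at the cost of a slightly longer sign check.
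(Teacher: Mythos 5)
Your proposal is correct, and your primary route is genuinely different from the paper's; in fact the paper's proof is precisely the ``pedestrian alternative'' you mention in your last sentence: it expands $\la \Ll_\Psi\beta^{l-2k+1},\alpha^l\ra_{L^2}$ using (\ref{eq:LK-d*}), moves the resulting $d^\ast$ and $d$ across the $L^2$-pairing, and rewrites both integrals via (\ref{prop-Hodge}) and (\ref{d*-formula}) until the pairing reads $(-1)^{n(n-l)}\la\beta^{l-2k+1},-\ast\Ll_\Psi\ast\alpha^l\ra_{L^2}$. Your commutator route --- $\Ll_\Psi=[L_\Psi,d^\ast]$ with $L_\Psi:=\cdot\wedge\Psi$, hence $\Ll_\Psi^\ast=d\Lambda_\Psi-\Lambda_\Psi d$ with $\Lambda_\Psi:=L_\Psi^\ast=\ast^{-1}L_\Psi\ast$ --- is an operator-level argument in the spirit of the K\"ahler identities: it makes the structural analogy with $d^\ast=\pm\ast d\ast$ transparent and isolates the analytic input in the single fact $(d^\ast)^\ast=d$, at the cost of the $\ast\ast$ bookkeeping, whereas the paper's integral computation keeps every sign attached to a concrete form of degree $l$ or $n-l$ and never needs the conjugation formula for $d$ or the operator $\Lambda_\Psi$. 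For completeness, the sign collapse you anticipate does hold: in $d\Lambda_\Psi\alpha^l$ the factor $(-1)^{(l-2k)(n-l+2k)}$ coming from $\ast^{-1}$ on $\Lambda^{n-l+2k}$ cancels against the identical factor produced when the inner $\ast\ast$ collapses, leaving $(-1)^{n(l-2k+1)}\ast d^\ast L_\Psi\ast\alpha^l=(-1)^{n(l+1)}\ast d^\ast L_\Psi\ast\alpha^l$; in $-\Lambda_\Psi d\alpha^l$ the $\ast^{-1}$-sign combines with $\ast d\alpha^l=(-1)^{l+1}d^\ast\ast\alpha^l$ to give $-(-1)^{n(l+1)}\ast L_\Psi d^\ast\ast\alpha^l$ (using that $l(l+1)$ and $2kn$ are even); since $n(l+1)\equiv n(n-l)\pmod 2$, the two terms assemble into $(-1)^{n(n-l)+1}\ast(L_\Psi d^\ast-d^\ast L_\Psi)\ast\alpha^l$, which is exactly (\ref{eq:formal-adjointLK}).
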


Observe the analogy of the formula for $\Ll_{\Psi;l}^\ast$ in (\ref{eq:formal-adjointLK}) and that for $d^\ast$ in (\ref{d*-formula}).

\begin{proof}
For 
$\alpha^l \in \Om^l(M)$ and 
$\beta^{l-2k+1} \in \Om^{l-2k+1}(M)$, we have 
\begin{align}
\nonumber
\la \Ll_\Psi \beta^{l-2k+1}, \alpha^l \ra_{L^2}
&
\stackrel{(\ref{eq:LK-d*})} =
\la (d^\ast \beta^{l-2k+1}) \wedge \Psi - d^\ast(\beta^{l-2k+1} \wedge \Psi), \alpha^l \ra_{L^2}\\
\label{eq:LKad-terms} &=
\int_M d^\ast \beta^{l-2k+1} \wedge \Psi \wedge \ast \alpha^l - \int_M \beta^{l-2k+1} \wedge \Psi \wedge \ast d \alpha^l.
\end{align}
Evaluating the two summands, we get by (\ref{prop-Hodge}) and (\ref{d*-formula}) 
\begin{align*}
\int_M &d^\ast \beta^{l-2k+1} \wedge \Psi \wedge \ast \alpha^l\\
&\stackrel{(\ref{prop-Hodge})}=
(-1)^{(n-l)l} \la d^\ast \beta^{l-2k+1}, \ast(\Psi \wedge \ast \alpha^l)\ra\\
&=
(-1)^{(n-l)l} \la \beta^{l-2k+1}, d \ast ( \Psi \wedge \ast \alpha^l) \ra_{L^2}\\
&\stackrel{(\ref{prop-Hodge}), (\ref{d*-formula})}=
(-1)^{n(n-l)} \la \beta^{l-2k+1},\ast d^\ast ( \Psi \wedge \ast \alpha^l) \ra_{L^2}.
\end{align*}
and
\begin{align*}
\int_M \beta^{l-2k+1} \wedge& \Psi \wedge \ast d \alpha^l\\
\stackrel{(\ref{prop-Hodge}), (\ref{d*-formula})}=&
(-1)^{l(n-l) + nl+1} 
\int_M \beta^{l-2k+1} \wedge \Psi \wedge d^\ast \ast \alpha^l \\
\stackrel{(\ref{prop-Hodge})}=&
(-1)^{n (n-l)} \left \la \beta^{l-2k+1}, \ast \left( \Psi \wedge d^\ast \ast \alpha^l \right) \right \ra_{L^2},
\end{align*}
Substituting these into (\ref{eq:LKad-terms}) yields
\begin{align*}
\la \Ll_\Psi \beta^{l-2k+1}, \alpha^l \ra_{L^2} =& (-1)^{n (n-l)} \left \la \beta^{l-2k+1}, \ast \left( d^\ast ( \Psi \wedge \ast \alpha^l) - \Psi \wedge d^\ast \ast \alpha^l  \right) \right \ra_{L^2}\\
=& (-1)^{n (n-l)} \left \la \beta^{l-2k+1}, -\ast \Ll_\Psi \ast \alpha^l \right \ra_{L^2},
\end{align*}
from which (\ref{eq:formal-adjointLK}) follows.
\end{proof}

We define the {\em space of $\Ll_\Psi$-harmonic forms} as
\begin{equation} \label{eq:LK-harmonic}
\begin{array}{lll} \Hh_\Psi^l(M) & := & \{ \alpha \in \Om^l(M) \mid \Ll_\Psi \alpha = \Ll_\Psi^\ast \alpha = 0\}\\[3mm]
& \stackrel{(\ref{eq:formal-adjointLK})}= & \{ \alpha \in \Om^l(M) \mid \Ll_\Psi \alpha = \Ll_\Psi \ast \alpha = 0\}
\end{array}
\end{equation}
Evidently, the Hodge-$\ast$ yields an isomorphism
\begin{equation} \label{eq:ast-Hodge}
\ast: \Hh_\Psi^l(M) \longrightarrow \Hh_\Psi^{n-l}(M).
\end{equation}

Since $\Hh^l_\Psi (M) \subset \ker \Ll_{\Psi;l+2k-1}$ and $\Hh^l_\Psi (M) \cap \Im(\Ll_{\Psi;l}) = 0$, there is a canonical injection
\begin{equation} \label{eq:inject-harmonic}
\imath_l: \Hh^l_\Psi(M) \hookrightarrow H^l_\Psi(M).
\end{equation}

This is analogous to the inclusion of harmonic forms into the deRham cohomology of a manifold, which for a closed manifold is an isomorphism due to the Hodge decomposition (\ref{Hodge-dec}). Therefore, one may hope that the maps $\imath_l$ are isomorphism as well. It is not clear if this is always true, but we shall give conditions which assure this to be the case and show that in the applications we have in mind, this condition is satisfied.

\begin{definition} \label{def:reg-cohom}
Let $(M, g)$ be an oriented Riemannian manifold, $\Psi \in \Om^{2k}(M)$ a parallel form and $\hat \Psi := \p_g \Psi \in {\mathcal {MC}}^{2k-1}(\Om^\ast(M, TM))$ as before.

We say that the differential $\Ll_\Psi$ is {\em $l$-regular }for $l \in \N$ if there is a direct sum decomposition
\begin{equation} \label{eq:condition-split}
\Om^l(M) = \ker (\Ll_{\Psi;l}^\ast) \oplus \Im(\Ll_{\Psi;l}).
\end{equation}
Furthermore, we call $\Ll_\Psi$ {\em regular} if it is $l$-regular for all $l \in \N$.
\end{definition}

A standard result from ellipticity theory states that $\Ll_\Psi$ is $l$-regular if the differential operator $\Ll_{\Psi;l}: \Om^{l-2k+1}(M) \to \Om^l(M)$ is elliptic, overdetermined elliptic or underdetermined elliptic, see e.g. \cite[p.464, 32 Corollary]{Besse1987}.

The following theorem now relates the cohomology $H^\ast_\Psi(M)$ to the $\Ll_\Psi$-harmonic forms $\Hh^\ast_\Psi(M)$.

\begin{theorem} \label{thm:cohom-harmonic}
Let $(M, g)$ be an oriented Riemannian manifold, $\Psi \in \Om^{2k}(M)$ a parallel form and $\hat \Psi := \p_g \Psi \in {\mathcal {MC}}^{2k-1}(\Om^\ast(M, TM))$ as before.
\begin{enumerate}
\item If $\Ll_\Psi$ is $l$-regular, then the map $\imath_l$ from (\ref{eq:inject-harmonic}) is an isomorphism.\item There are direct sum decompositions
\begin{eqnarray}
\label{eq:HL-decomp}
H_\Psi^l(M) & \cong & \Hh^l(M) \oplus H_\Psi^l(M)_d \oplus H_\Psi^l(M)_{d^\ast}\\
\label{eq:decomp-Hh_K}
\Hh_\Psi^l(M) & = & \Hh^l(M) \oplus \Hh_\Psi^l(M)_d \oplus \Hh_\Psi^l(M)_{d^\ast},
\end{eqnarray}
where $\Hh^l(M)$ is the space of harmonic $l$-forms on $M$, $H_\Psi^l(M)_d$ and $H_\Psi^l(M)_{d^\ast}$ are the cohomologies of $(d\Om^\ast(M), \Ll_\Psi)$ and $(d^\ast\Om^\ast(M), \Ll_\Psi)$, respectively, and where $\Hh_\Psi^l(M)_d := \Hh_\Psi^l(M) \cap d\Om^{l-1}(M)$, $\Hh_\Psi^l(M)_{d^\ast} := \Hh_\Psi^l(M) \cap d^\ast\Om^{l+1}(M)$. Moreover, the injective map $\imath_l$ from (\ref{eq:inject-harmonic}) preserves this decomposition, i.e.,
\[
\imath_l: \Hh_\Psi^l(M)_d \hookrightarrow H_\Psi^l(M)_d \quad \mbox{and} \quad \imath_l: \Hh_\Psi^l(M)_{d^\ast} \hookrightarrow H_\Psi^l(M)_{d^\ast}.
\]
\item
There are isomorphisms
\begin{align}
\label{eq:Hl-split1} d: H_\Psi^l(M)_{d^\ast} \to H_\Psi^{l+1}(M)_d \quad \mbox{and} \quad d^\ast: H_\Psi^l(M)_d \to H_\Psi^{l-1}(M)_{d^\ast}\\
\label{eq:Hl-split} d: \Hh_\Psi^l(M)_{d^\ast} \to \Hh_\Psi^{l+1}(M)_d \quad \mbox{and} \quad d^\ast: \Hh_\Psi^l(M)_d \to \Hh_\Psi^{l-1}(M)_{d^\ast}
\end{align}
\item If $\Ll_\Psi$ is $(l+1)$-regular and $(l-1)$-regular, then it is also $l$-regular.
\end{enumerate}
\end{theorem}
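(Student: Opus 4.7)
The plan is to exploit systematically that both $\Ll_\Psi$ and its formal adjoint $\Ll^*_\Psi$ preserve each piece of the usual Hodge decomposition of $\Om^\ast(M)$ and that they vanish on harmonic forms, so that the entire statement reduces to standard Hodge-theoretic arguments applied to the three resulting subcomplexes. Part (1) is then the standard ``harmonic representative'' argument: given $\alpha \in \ker \Ll_{\Psi;l+2k-1}$, invoke the $l$-regular splitting (\ref{eq:condition-split}) to write $\alpha = \alpha_0 + \Ll_\Psi\beta$ with $\alpha_0 \in \ker\Ll^*_{\Psi;l}$; then $\Ll_\Psi\alpha_0 = \Ll_\Psi\alpha - \Ll_\Psi^2\beta = 0$, so $\alpha_0 \in \Hh^l_\Psi(M)$ and represents $[\alpha]$. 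Together with the already-noted injectivity (\ref{eq:inject-harmonic}), this gives the isomorphism.

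For part (2), I would first verify that both $\Ll_\Psi$ and $\Ll^*_\Psi$ preserve each of the three summands of the Hodge decomposition (\ref{Hodge-dec}). On $d\Om^{l-1}$ and $d^*\Om^{l+1}$ this is immediate from (\ref{eq:LK-commutes}), and the analogous property for $\Ll^*_\Psi$ follows from (\ref{eq:formal-adjointLK}) together with the way $\ast$ permutes the Hodge pieces. On $\Hh^l(M)$ one checks that $\Ll_\Psi$ actually vanishes: for harmonic $\alpha$ we have $d^*\alpha = 0$, and by (\ref{eq:Laplace-commute}), $\alpha \wedge \Psi$ is harmonic, so $d^*(\alpha\wedge\Psi) = 0$; formula (\ref{eq:LK-d*}) then gives $\Ll_\Psi\alpha = 0$, and repeating the argument with $\ast\alpha$ yields $\Ll^*_\Psi\alpha = 0$. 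Consequently $(\Om^\ast(M),\Ll_\Psi)$ splits as a direct sum of three subcomplexes, the harmonic one with zero differential, producing (\ref{eq:HL-decomp}); intersecting with $\ker \Ll^*_\Psi$ produces (\ref{eq:decomp-Hh_K}).

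Part (3) reduces to showing that $d$ restricts to an isomorphism $d^*\Om^{l+1}(M) \to d\Om^l(M)$, which is a standard Hodge-theoretic fact: surjectivity from the Hodge decomposition, injectivity since a coexact closed form is both $d$- and $d^*$-closed, hence harmonic, hence zero. Because $d$ anticommutes with $\Ll_\Psi$ via (\ref{eq:LK-commutes}), it intertwines the two subcomplexes $(d^*\Om^\ast, \Ll_\Psi)$ and $(d\Om^\ast, -\Ll_\Psi)$ and descends to the claimed isomorphism (\ref{eq:Hl-split1}); the inverse map is induced by $d^*$ (up to a Laplace-theoretic normalization). Restricting to forms that are also $\Ll^*_\Psi$-closed, which by part (2) respects the Hodge pieces, yields the harmonic versions (\ref{eq:Hl-split}).

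For part (4), the preservation of the Hodge decomposition by both $\Ll_\Psi$ and $\Ll^*_\Psi$ implies that the identity (\ref{eq:condition-split}) splits into three independent statements, one per Hodge summand. The harmonic piece is trivial since $\Ll_\Psi$ vanishes there. On the exact piece of $\Om^l$, I would conjugate via the isomorphism $d: d^*\Om^{l-2k+1}(M) \to d\Om^{l-2k}(M)$ on the source and $d: d^*\Om^l(M) \to d\Om^{l-1}(M)$ on the target to identify $\Ll_\Psi|_{d\Om^{l-2k}}$ with (a sign times) $\Ll_\Psi|_{d^*\Om^{l-2k+1}}$ living inside $\Om^{l-1}$; the latter's closed-range property is exactly the $d^*$-component of the assumed $(l-1)$-regularity. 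The symmetric argument using $d^*$ in place of $d$ reduces regularity on the coexact piece of $\Om^l$ to the $d$-component of $(l+1)$-regularity. The main technical obstacle is keeping the degree shifts straight in these two conjugations and confirming that the closed-range/orthogonal-decomposition property really does transfer along the $d$-isomorphisms; once the subcomplex structure from part (2) is in hand, everything else is diagram chasing.
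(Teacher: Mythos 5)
Your proposal is correct and follows essentially the same route as the paper: both rest on the observations that $\Ll_\Psi$ and $\Ll_\Psi^\ast$ kill harmonic forms and preserve the Hodge decomposition, that $d$ and $d^\ast$ intertwine the exact and coexact subcomplexes (the paper records this as the explicit identities (\ref{eq:kerL-d-commute}) and (\ref{eq:kerL-d*-commute}) and inverts via $dd^\ast=\triangle$, which you repackage as a chain isomorphism), and that regularity transfers along these intertwiners to give part (4). The only differences are presentational, so nothing further is needed.
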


\begin{proof}
Note that (\ref{eq:condition-split}) and $\Ll_\Psi^2 = 0$ imply that $\ker \Ll_\Psi|_{\Om^l(M)} = \Hh_\Psi^l(M) \oplus \Ll_\Psi(\Om^{l-2k+1}(M))$, and from this, the first assertion follows.

If $\alpha_h \in \Hh^l(M)$ is harmonic, then $d^\ast \alpha_h = 0$, and $\alpha_h \wedge \Psi \in \Hh^{l+2k-1}(M)$ by (\ref{eq:Laplace-commute}), so that $d^\ast(\alpha_h \wedge \Psi) = 0$. This implies that $\Ll_\Psi(\alpha_h) = 0$, and since $\ast \alpha_h$ is also harmonic, it follows that $\Ll_\Psi(\ast \alpha_h) = 0$, whence $\Hh^l(M) \subset \Hh_\Psi^l(M)$.

Furthermore, $\Ll_\Psi$ anti-commutes with $d$ and $d^\ast$ by (\ref{eq:LK-commutes}), whence it follows that $\Ll_\Psi$ preserves the Hodge decomposition (\ref{Hodge-dec}), so that, in particular, the image of $\Ll_\Psi$ is contained in $d\Om^\ast(M) \oplus d^\ast \Om^\ast(M)$. From this, the decompositions (\ref{eq:HL-decomp}) and (\ref{eq:decomp-Hh_K}) and hence the second assertion follow.

For the third statement, we first show that
\begin{align}
\nonumber \ker \Ll_\Psi \cap d\Om^{l-1}(M) &= d (\ker \Ll_\Psi \cap d^\ast\Om^l(M)),\\
\label{eq:kerL-d-commute} \ker \Ll_\Psi \cap d^\ast\Om^{l+1}(M) &= d^\ast (\ker \Ll_\Psi \cap d\Om^l(M))\\
\nonumber \Im \Ll_\Psi \cap d\Om^{l-1}(M) &= d(\Im \Ll_\Psi \cap d^\ast \Om^l(M))\\
\nonumber \Im \Ll_\Psi \cap d^\ast\Om^{l-1}(M) &= d^\ast(\Im \Ll_\Psi \cap d \Om^l(M)).
\end{align}
Namely, if $d \alpha^{l-1} \in \ker (\Ll_\Psi)$ then by the Hodge decomposition we may assume w.l.o.g. that $\alpha^{l-1} = d^\ast \alpha^l$ with $\alpha^l \in \Om^l(M)$. Then
\[
0 = \Ll_\Psi(d\alpha^{l-1}) = \Ll_\Psi(dd^\ast \alpha^l) \stackrel{(\ref{eq:LK-commutes})}= dd^\ast \Ll_\Psi(\alpha^l),
\]
so that $0 = d^\ast \Ll_\Psi(\alpha^l) = - \Ll_\Psi(d^\ast \alpha^l) = - \Ll_\Psi(\alpha^{l-1})$. This shows that $\ker \Ll_\Psi \cap d\Om^{l-1}(M) \subset d (\ker \Ll_\Psi \cap d^\ast\Om^l(M))$, and the reverse inclusion follows from (\ref{eq:LK-commutes}). The remaining equalities in (\ref{eq:kerL-d-commute}) are shown analogously.

It follows that $d$ and $d^\ast$ induce the asserted maps in (\ref{eq:Hl-split1}). Observe that the compositions
\[
dd^\ast: H_\Psi^l(M)_d \to H_\Psi^l(M)_d \qquad \mbox{and} \qquad d^\ast d: H_\Psi^l(M)_{d^\ast} \to H_\Psi^l(M)_{d^\ast}
\]
are induced by the restriction of the Laplacian to $\ker \Ll_\Psi \cap d\Om^{l-1}(M)$ and $\ker \Ll_\Psi \cap d^\ast\Om^{l+1}(M)$, respectively, and hence they are isomorphisms. Thus, it follows that the maps in (\ref{eq:Hl-split1}) are isomorphisms as well.

In addition to the equations in (\ref{eq:kerL-d-commute}) it follows from (\ref{eq:formal-adjointLK}) that
\begin{align} 
\label{eq:kerL-d*-commute} \ker \Ll_\Psi^\ast|_{d\Om^{l-1}(M)} &= d (\ker \Ll_\Psi^\ast|_{d^\ast\Om^l(M)}), \qquad \mbox{and}\\
\nonumber \ker \Ll_\Psi^\ast|_{d^\ast\Om^{l+1}(M)} &= d^\ast (\ker \Ll_\Psi^\ast|_{d\Om^l(M)}).
\end{align}
Thus,
\begin{align*}
\Hh_\Psi^l(M)_d &= \ker \Ll_\Psi|_{d\Om^{l-1}(M)} \cap \ker \Ll_\Psi^\ast|_{d\Om^{l-1}(M)}\\
&\stackrel{(\ref{eq:kerL-d-commute}), (\ref{eq:kerL-d*-commute})} = d (\ker \Ll_\Psi|_{d^\ast\Om^l(M)}) \cap d (\ker \Ll_\Psi^\ast|_{d^\ast\Om^l(M)})\\
&\stackrel{(\ast)}= d \big(\ker \Ll_\Psi|_{d^\ast\Om^l(M)} \cap \ker \Ll_\Psi^\ast|_{d^\ast\Om^l(M)}\big) =d \Hh_\Psi^{l-1}(M)_{d^\ast},
\end{align*}
where at $(\ast)$ we used that $d: d^\ast \Om^l(M) \to \Om^{l-1}(M)$ is injective. Thus, the map $d: \Hh_\Psi^{l-1}(M)_{d^\ast} \to \Hh_\Psi^l(M)_d$ is an isomorphism, and the same holds for the other map in (\ref{eq:Hl-split}), and both are continuous in the Frech\'et topology.

For the fourth statement, suppose now that $\Ll_\Psi^\ast$ is $(l \pm 1)$-regular. Then by (\ref{eq:LK-commutes}) and (\ref{eq:kerL-d-commute})
\begin{align*}
d\Om^{l-1}(M) &= d\ker \Ll_{\Psi;l}^\ast|_{\Om^{l-1}(M)} \oplus d\Ll_{\Psi;l}(\Om^{l-2k}(M))\\
&= \ker \Ll_{\Psi;l}^\ast|_{d\Om^{l-1}(M)} \oplus \Ll_{\Psi;l}(d\Om^{l-2k}(M))\\
d^\ast\Om^{l+1}(M) &= d^\ast \ker \Ll_{\Psi;l}^\ast|_{\Om^{l+1}(M)} \oplus d^\ast \Ll_{\Psi;l}(\Om^{l+2-2k}(M))\\
&= \ker \Ll_{\Psi;l}^\ast|_{d^\ast\Om^{l+1}(M)} \oplus \Ll_{\Psi;l}(d^\ast\Om^{l+2-2k}(M)),
\end{align*}
and since $\Hh^l(M) \subset \ker \Ll_\Psi^\ast$ by the proof of (2), the Hodge decomposition of $\Om^l(M)$ implies
\begin{align*}
\Om^l(M) &= \Hh^l(M) \oplus d\Om^{l-1}(M) \oplus d^\ast\Om^{l+1}(M)\\
&= \Hh^l(M) \oplus \ker \Ll_{\Psi;l}^\ast|_{d\Om^{l-1}(M)} \oplus \Ll_{\Psi;l}(d\Om^{l-2k}(M))\\
&\qquad \oplus \ker \Ll_{\Psi;l}^\ast|_{d^\ast\Om^{l+1}(M)} \oplus \Ll_{\Psi;l}(d^\ast\Om^{l+2-2k}(M))\\
& = \ker \Ll_{\Psi;l}^\ast \oplus \Im(\Ll_{\Psi;l}),
\end{align*}
where the last equality follows since $\Ll_{\Psi;l}(\Hh^{l-2k+1}(M)) = 0$ and $\Ll_{\Psi;l}^\ast(\Hh^l(M)) = 0$. This shows that  $\Ll_\Psi^\ast$ is $l$-regular.
\end{proof}

In order to give an application of this result, recall that a {\em differential graded algebra (DGA)} is a graded associative algebra $V = (\bigoplus_{k \in \Z} V_k, \cdot)$ with a {\em differential}, i.e. a graded derivation $d$ of degree $1$ such that $d^2 = 0$. Since $d \in {\mathcal D}(V)$ is a Maurer-Cartan element, it defines the cohomology algebra (\ref{eq:cohom-GA}) $H^\ast(V, d) = \ker d/\Im d$.

A {\em DGA-homomorphism }between two DGAs $(V, d_V)$ and $(W, d_W)$ is a graded algebra morphism $\phi: (V, d_V) \to (W, d_W)$ which commutes with the differentials, i.e., $d_W \phi = \phi d_V$. In this case, $\phi$ induces an algebra homomorphism $\phi_\ast: H^\ast(V, d_V) \to H^\ast(W, d_W)$, and we call $\phi$ a {\em quasi-isomorphism} if $\phi_\ast$ is an isomorphism.

With this terminology, we may now formulate the following result.

\begin{corollary} \label{cor:formal}
Let $\Psi \in \Om^{2k}(M)$ be a parallel form on a closed Riemannian manifold $(M, g)$. Then the maps
\begin{equation} \label{eq:quasiiso}
(\Om^\ast(M), d) \longleftarrow (\ker \Ll_\Psi, d) \longrightarrow (H^\ast_\Psi(M), d)
\end{equation}
defined by the inclusion and the canonical projection, respectively, are quasi-isomorphisms.
\end{corollary}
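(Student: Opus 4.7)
The plan is to prove that each of the two arrows in (\ref{eq:quasiiso}) is a quasi-isomorphism separately. Throughout I will use two ingredients that are already at hand: first, by (\ref{eq:LK-commutes}), $\Ll_\Psi$ anti-commutes with both $d$ and $d^\ast$, hence commutes with the Hodge Laplacian $\triangle$ by (\ref{eq:LK-comm-Laplace}) and preserves the Hodge decomposition (\ref{Hodge-dec}); second, as noted in the proof of Theorem~\ref{thm:cohom-harmonic}(2), every harmonic form lies in $\ker\Ll_\Psi$. A preliminary, essentially routine, step is to verify that $d$ indeed restricts to $\ker\Ll_\Psi$ and descends to $H^\ast_\Psi(M)$, so that both arrows are DGA-morphisms; this is immediate from the anti-commutation.

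For the inclusion $\imath\colon(\ker\Ll_\Psi,d)\hookrightarrow(\Om^\ast(M),d)$, surjectivity on cohomology is free: every de Rham class has a harmonic representative, and such representatives lie in $\ker\Ll_\Psi$. For injectivity I would use the Green operator $G$ of $\triangle$, which commutes with $\Ll_\Psi$ because $\triangle$ does and $\Ll_\Psi$ annihilates the harmonic projection $H$. If $\alpha\in\ker\Ll_\Psi$ is $d$-closed and $d$-exact, then $\beta:=d^\ast G\alpha$ satisfies $d\beta=dd^\ast G\alpha=\alpha-H\alpha=\alpha$ (using $dG\alpha=Gd\alpha=0$ and $H\alpha=0$), while the anti-commutation gives $\Ll_\Psi\beta=-d^\ast G\,\Ll_\Psi\alpha=0$. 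Thus $\alpha$ is already exact inside $\ker\Ll_\Psi$.

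For the projection $\pi\colon(\ker\Ll_\Psi,d)\to(H^\ast_\Psi(M),d)$ the Hodge decomposition is again the main tool. For surjectivity of $\pi_\ast$, take $\beta\in\ker\Ll_\Psi$ with $d\beta=\Ll_\Psi\gamma$ and write $\beta=\beta_h+d\eta+d^\ast\zeta$; since $\Ll_\Psi$ preserves the Hodge decomposition, every summand lies in $\ker\Ll_\Psi$, so $\beta':=\beta_h+d\eta$ is a $d$-closed lift in $\ker\Ll_\Psi$. The relation $dd^\ast\zeta=\Ll_\Psi\gamma$, combined with the identity $Gd^\ast(dd^\ast\zeta)=d^\ast\zeta$ and the commutation of $G$ with $\Ll_\Psi$, yields $d^\ast\zeta=-\Ll_\Psi(Gd^\ast\gamma)\in\Im\Ll_\Psi$, hence $\beta-\beta'\in\Im\Ll_\Psi$ and $\pi([\beta'])=[\beta]$. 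For injectivity, given $\alpha\in\ker\Ll_\Psi$ that is $d$-closed with $\alpha=\Ll_\Psi\gamma$, Hodge-decompose $\gamma=\gamma_h+d\eta+d^\ast\zeta$; then $\alpha=-d\,\Ll_\Psi\eta-d^\ast\Ll_\Psi\zeta$, and since the $d^\ast$-exact component of the $d$-closed form $\alpha$ must vanish, uniqueness of the Hodge decomposition forces $d^\ast\Ll_\Psi\zeta=0$ and $\alpha=d(-\Ll_\Psi\eta)$, with $-\Ll_\Psi\eta\in\ker\Ll_\Psi$ thanks to $\Ll_\Psi^2=0$.

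The one genuinely delicate step I anticipate is the surjectivity of $\pi_\ast$: it is the only point where the Hodge theory and the $\Ll_\Psi$-structure interact nontrivially, very much in the spirit of a $dd^c$-lemma. Everything else reduces to clean bookkeeping once one knows that $\Ll_\Psi$ preserves the Hodge decomposition and commutes with $G$.
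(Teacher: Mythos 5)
Your argument is correct in substance and rests on the same three pillars as the paper's proof: the Hodge decomposition (\ref{Hodge-dec}), the anti-commutation (\ref{eq:LK-commutes}) (so that $\Ll_\Psi$ preserves the Hodge decomposition and commutes with $\triangle$ and hence with the Green operator), and the fact that harmonic forms lie in $\ker\Ll_\Psi$. Only the packaging differs. For the inclusion, where you build an explicit primitive $d^\ast G\alpha$, the paper instead quotes the identity $\ker\Ll_\Psi\cap d\Om^{l-1}(M)=d(\ker\Ll_\Psi\cap d^\ast\Om^l(M))$ from (\ref{eq:kerL-d-commute}), whose proof is essentially your Green-operator computation in disguise; the two are interchangeable. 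For the projection, the paper avoids checking surjectivity and injectivity separately by observing that $0\to(\Im\Ll_\Psi,d)\to(\ker\Ll_\Psi,d)\to(H^\ast_\Psi(M),d)\to 0$ is a short exact sequence of DGAs, so that $\pi_\ast$ is an isomorphism as soon as $(\Im\Ll_\Psi,d)$ is acyclic --- and that acyclicity is exactly your injectivity computation ($\Ll_\Psi\alpha=-d\Ll_\Psi\beta-d^\ast\Ll_\Psi\gamma$, and a closed form has vanishing coexact part). Your separate surjectivity argument, which you flag as the delicate step, is therefore work the long exact sequence gives for free, though it is correct as written. One small imprecision to repair: in the injectivity step you only treat cocycles with $\alpha=\Ll_\Psi\gamma$, whereas $\pi_\ast[\alpha]=0$ only gives $\alpha-d\mu\in\Im\Ll_\Psi$ for some $\mu\in\ker\Ll_\Psi$; replacing $\alpha$ by $\alpha-d\mu$ reduces to your case, so this is a one-line fix rather than a gap.
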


Following standard terminology, the existence of the quasi-isomorphisms in (\ref{eq:quasiiso}) means that that $(H^\ast_\Psi(M), d)$ is {\em formally equivalent }to the deRham algebra $(\Om^\ast(M), d)$, and by \cite{Sullivan} this implies that $(H^\ast_\Psi(M), d)$ determines the rational homotopy type of $M$.

\begin{proof}
As $\Ll_\Psi$ is a derivation, it easily follows that $\ker \Ll_\Psi \subset \Om^\ast(M)$ is a subalgebra and $\Im \Ll_\Psi \subset \ker \Ll_\Psi$ is an ideal; as $\Ll_\Psi$ anti-commutes with $d$ by (\ref{eq:LK-commutes}), the maps in (\ref{eq:quasiiso}) are well defined DGA-homomorphisms.

Let $\alpha^l \in \ker d|_{\ker \Ll_\Psi}$, so that $\alpha^l = \alpha^l_h + d\beta^{l-1}$ with $\alpha^l_h \in \Hh^l(M)$ and $\beta^{l-1} \in \Om^{l-1}(M)$. 
Since $\alpha^l_h \in \ker \Ll_\Psi$, it follows that
\[
d\beta^{l-1} \in \ker \Ll_\Psi \cap d\Om^{l-1}(M) \stackrel{(\ref{eq:kerL-d-commute})}\subset d\ker \Ll_\Psi.
\]
That is, $\ker d|_{\ker \Ll_\Psi} = \Hh^\ast(M) \oplus d\ker \Ll_\Psi$, whence the inclusion 
$(\ker \Ll_\Psi, d) \hookrightarrow (\Om^\ast(M), d)$ is a quasi-isomorphism.

Since $0 \rightarrow (\Im \Ll_\Psi, d) \rightarrow (\ker \Ll_\Psi, d) \rightarrow (H^*_\Psi(M), d) \rightarrow 0$ is a short exact sequence of DGAs, it follows that the projection $\ker \Ll_\Psi \to H^\ast_\Psi(M)$ is a quasi-isomorphism iff $(\Im \Ll_\Psi, d|_{\Im \Ll_\Psi})$ has trivial cohomology. 
The Hodge decomposition of an element $\alpha^l = \alpha^l_h + d\beta^{l-1}�+ d^\ast \gamma^{l+1}$ yields by (\ref{eq:LK-commutes})
\[
\Ll_\Psi \alpha^l = - d \Ll_\Psi \beta^{l-1} - d^\ast \Ll_\Psi \gamma^{l+1},
\]
so that $\Ll_\Psi \alpha^l$ is closed iff it is contained in $d \Im \Ll_\Psi$, showing the triviality of the cohomology of $(\Im \Ll_\Psi, d)$ and hence the claim.
\end{proof}

We call a form $\Psi \in \Om^k(M)$ {\em multi-symplectic}, if $d\Psi = 0$ and if for all $v \in TM$
\begin{equation} \label{eq:Psi-nondeg}
\imath_v \Psi = 0 \Longleftrightarrow v = 0.
\end{equation}

Since for a parallel form $\Psi$ the distribution $\{ v \in TM \mid \imath_v \Psi = 0\}$ is parallel as well, it follows that for an irreducible Riemannian manifold any {\em parallel }form $\Psi \not \equiv 0$ is multi-symplectic.

\begin{lemma} \label{lem:elliptic multi-symplectic}
If $\Psi \in \Om^{2k}(M)$ is multi-symplectic, then the differential operator $\Ll_{\Psi;l}: \Om^{l-2k+1}(M) \to \Om^l(M)$ is overdetermined elliptic for $l = 2k-1$ and underdetermined elliptic for $l = n$.
\end{lemma}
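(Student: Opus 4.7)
The plan is to read off the principal symbol of the first-order operator $\Ll_{\Psi;l}$ directly from the formula (\ref{eq:LK-d*}), and then check that the multi-symplectic condition (\ref{eq:Psi-nondeg}) forces this symbol to be pointwise injective (respectively surjective) for the two specific values of $l$.

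First I would compute the symbol. Starting from $\Ll_\Psi \alpha = (d^\ast\alpha) \wedge \Psi - d^\ast(\alpha \wedge \Psi)$ and using the standard fact that $\sigma(d^\ast)(\xi) = -\imath_{\xi^\#}$, one obtains, for $\xi \in T_x^\ast M$ and $\alpha \in \Lambda^{|\alpha|} T_x^\ast M$,
\[
\sigma(\Ll_{\Psi;l})(\xi)(\alpha) = -\imath_{\xi^\#}\alpha \wedge \Psi + \imath_{\xi^\#}(\alpha \wedge \Psi) = (-1)^{|\alpha|}\, \alpha \wedge \imath_{\xi^\#}\Psi,
\]
where the second equality uses the graded derivation rule for $\imath_{\xi^\#}$ applied to $\alpha \wedge \Psi$. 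In particular, modulo the sign, the symbol is exterior multiplication by $\imath_{\xi^\#}\Psi \in \Lambda^{2k-1} T_x^\ast M$.

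For $l = 2k-1$, the input space is $\Lambda^0 T_x^\ast M = \R$, so the symbol is just multiplication of a scalar by $\imath_{\xi^\#}\Psi$. The multi-symplectic condition (\ref{eq:Psi-nondeg}) says precisely that $\imath_{\xi^\#}\Psi \neq 0$ whenever $\xi \neq 0$, so the symbol is injective at every non-zero $\xi$, which is the definition of overdetermined ellipticity. For $l = n$, the target $\Lambda^n T_x^\ast M$ is one-dimensional, and we must show that for every $\xi \neq 0$ there exists $\alpha \in \Lambda^{n-2k+1} T_x^\ast M$ with $\alpha \wedge \imath_{\xi^\#}\Psi \neq 0$; this is the statement that the wedge pairing $\Lambda^{n-2k+1} T_x^\ast M \times \Lambda^{2k-1} T_x^\ast M \to \Lambda^n T_x^\ast M$ is non-degenerate, applied to the non-zero element $\imath_{\xi^\#}\Psi$. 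Hence the symbol is surjective at every non-zero $\xi$, which gives underdetermined ellipticity.

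There is essentially no obstacle beyond the symbol computation: once the principal symbol is identified with (signed) wedge multiplication by $\imath_{\xi^\#}\Psi$, both claims reduce to elementary linear algebra together with the single hypothesis (\ref{eq:Psi-nondeg}). The only point requiring a bit of care is the sign and index bookkeeping in expanding $\imath_{\xi^\#}(\alpha \wedge \Psi)$, which is routine.
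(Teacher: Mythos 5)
Your proof is correct and follows essentially the same route as the paper: identify the principal symbol of $\Ll_{\Psi;l}$ as (signed) wedge multiplication by $\imath_{\xi^\#}\Psi$, then use that the domain (for $l=2k-1$) or the target (for $l=n$) is one-dimensional together with the multi-symplectic condition. The only cosmetic difference is that for $l=n$ the paper exhibits the explicit preimage $\ast\,\imath_{\xi^\#}\Psi$, whose image is $|\imath_{\xi^\#}\Psi|^2\,\vol_p\neq 0$, whereas you invoke the non-degeneracy of the wedge pairing $\Lambda^{n-2k+1}\times\Lambda^{2k-1}\to\Lambda^{n}$ — the same fact in abstract form.
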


\begin{proof}
By (\ref{eq:LK-nabla}), the symbol $\sigma_\xi (\Ll_{\Psi, l})$ of $\Ll_\Psi: \Om^{l-2k+1}(M) \rightarrow \Om^l (M)$ 
for $\xi \in T_p^\ast M$ is given by 
\begin{align} \label{eq:symbol of L_K}
\sigma_\xi (\Ll_{\Psi, l}): \Lambda^{l-2k+1} T_p^\ast M &\longrightarrow  \Lambda^l T_p^\ast M,\\
\nonumber
\alpha^{l-2k+1} &\longmapsto (\imath_{\xi^\#} \Psi) \wedge \alpha^{l+2k-1}.
\end{align}

If $l = 2k-1$ or $l = n$ then either the domain or the range of $\sigma_\xi (\Ll_{\Psi, l})$ is one dimensional, whence the injectivity or surjectivity of $\sigma_\xi (\Ll_{\Psi, l})$ is given unless $\sigma_\xi (\Ll_{\Psi, l}) = 0$.

For $l = 2k-1$, observe that $\sigma_\xi (\Ll_{\Psi, l}) (1) = (\imath_{\xi^\#} \Psi) \neq 0$ for $\xi \neq 0$ as $\Psi$ is multi-symplectic; for $l = n$,
\[
\sigma_\xi (\Ll_{\Psi, n}) (\ast \imath_{\xi^\#} \Psi) = (\imath_{\xi^\#} \Psi) \wedge (\ast \imath_{\xi^\#} \Psi) = |\imath_{\xi^\#} \Psi|^2 \vol_p,
\]
which again is non-zero for $\xi \neq 0$ as $\Psi$ is multi-symplectic.
\end{proof}

As an immediate consequence of this and Theorem \ref{thm:cohom-harmonic}, we obtain the 
\begin{corollary} \label{cor:elliptic multi-symplectic}
Let $\Psi \in \Om^{2k}(M)$ be a parallel multi-symplectic form on the Riemannian manifold $(M, g)$. Then
\[
H_\Psi^{2k-1}(\Om^\ast(M)) \cong \Hh^{2k-1}_\Psi(M).
\]
\end{corollary}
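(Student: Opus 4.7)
The plan is to combine Lemma \ref{lem:elliptic multi-symplectic} with Theorem \ref{thm:cohom-harmonic}(1) directly. Since $\Psi$ is parallel and multi-symplectic, Lemma \ref{lem:elliptic multi-symplectic} tells us that the differential operator
\[
\Ll_{\Psi; 2k-1}: \Om^{0}(M) \longrightarrow \Om^{2k-1}(M)
\]
is overdetermined elliptic (note $l - 2k + 1 = 0$ when $l = 2k-1$). By standard elliptic theory for overdetermined operators, as cited in the paper at \cite[p.464, 32 Corollary]{Besse1987}, this implies the existence of a topological direct sum decomposition
\[
\Om^{2k-1}(M) = \ker(\Ll_{\Psi;2k-1}^\ast) \oplus \Im(\Ll_{\Psi;2k-1}),
\]
which is precisely the $(2k-1)$-regularity condition of Definition \ref{def:reg-cohom}.

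Having established $(2k-1)$-regularity of $\Ll_\Psi$, I would simply invoke Theorem \ref{thm:cohom-harmonic}(1), which asserts that the natural inclusion map
\[
\imath_{2k-1}: \Hh^{2k-1}_\Psi(M) \hookrightarrow H^{2k-1}_\Psi(M)
\]
from (\ref{eq:inject-harmonic}) is an isomorphism under this regularity hypothesis. This yields the asserted isomorphism $H_\Psi^{2k-1}(\Om^\ast(M)) \cong \Hh^{2k-1}_\Psi(M)$ and completes the proof.

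There is no real obstacle here, since all the substantive work has been done in Lemma \ref{lem:elliptic multi-symplectic} (where the multi-symplectic assumption was used to guarantee non-vanishing of the principal symbol on one-dimensional domain/range) and in Theorem \ref{thm:cohom-harmonic} (where regularity was shown to imply the harmonic representation of the cohomology). The corollary is thus a direct concatenation of these two results in the specific degree $l = 2k-1$, for which the domain $\Om^0(M)$ of $\Ll_{\Psi;l}$ is one-dimensional at each point, ensuring ellipticity without further assumptions.
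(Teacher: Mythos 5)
Your proposal is correct and is precisely the argument the paper intends: the corollary is stated as an immediate consequence of Lemma \ref{lem:elliptic multi-symplectic} (overdetermined ellipticity of $\Ll_{\Psi;2k-1}$, giving $(2k-1)$-regularity via the cited result in \cite{Besse1987}) combined with Theorem \ref{thm:cohom-harmonic}(1). Nothing further is needed.
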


We also can make some statement for $H_\Psi^l(M)$ for special values of $l$.

\begin{proposition}\label{prop:chomom-LK_0,n}
Let $\Psi \in \Om^{2k}(M)$ be a parallel form on the oriented Riemannian manifold $(M, g)$. Then
\[
H_\Psi^0(\Om^\ast(M)) \cong \Hh_\Psi^0(M) = \{ f \in C^{\infty}(M) \mid \imath_{df^\#} \Psi = 0 \}.
\]

If $\Psi$ is multi-symplectic, then $\Hh_\Psi^0(M) = \Hh^0(M)$ and $H_\Psi^n (\Om^\ast(M)) \cong \Hh_\Psi^n(M) = \Hh^n(M)$.
\end{proposition}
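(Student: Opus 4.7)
The plan is to handle the two extreme degrees $l=0$ and $l=n$ separately, exploiting degree vanishing to trivialize the quotient defining $H^l_\Psi(M)$ on one side, an explicit pointwise formula for $\Ll_\Psi f$ to pin down $\Hh^0_\Psi(M)$, and then Hodge duality together with the ellipticity result of Lemma \ref{lem:elliptic multi-symplectic} to transport the statement to degree $n$.

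For $l=0$, the key observation is that $\Om^{1-2k}(M)=0$, so $\Im \Ll_{\Psi;0}=0$ and, simultaneously, the target of $\Ll_\Psi^{*}|_{\Om^0(M)}$ is the zero space. Hence both $H^0_\Psi(M)$ and $\Hh^0_\Psi(M)$ reduce to $\ker(\Ll_{\Psi;2k-1}\colon \Om^0(M)\to \Om^{2k-1}(M))$, which already yields the first isomorphism without any regularity hypothesis. To describe this kernel concretely, I would apply (\ref{eq:LK-nabla}) at a point $p\in M$ with an orthonormal frame $(e_i)$ normal at $p$: for $f\in C^\infty(M)$ we have
\[
\Ll_\Psi f|_p = \sum_i (\nabla_{e_i} f)\,(\imath_{e_i}\Psi)|_p = \imath_{df^{\#}}\Psi|_p,
\]
and since $p$ was arbitrary, $\Ll_\Psi f = \imath_{df^{\#}}\Psi$ globally, yielding the claimed description of $\Hh^0_\Psi(M)$. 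If in addition $\Psi$ is multi-symplectic, then condition (\ref{eq:Psi-nondeg}) forces $df^{\#}=0$, so $f$ is locally constant and $\Hh^0_\Psi(M) = \Hh^0(M)$.

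For $l=n$, now $\Om^{n+2k-1}(M)=0$, so the condition $\Ll_\Psi \alpha^n = 0$ is automatic and $\Hh^n_\Psi(M) = \ker(\Ll_\Psi^{*}|_{\Om^n(M)})$. By (\ref{eq:formal-adjointLK}), $\Ll_\Psi^{*}\alpha^n = -\ast\Ll_\Psi(\ast\alpha^n)$; writing $f := \ast\alpha^n \in C^\infty(M)$ we obtain $\alpha^n \in \Hh^n_\Psi(M)$ iff $f\in \Hh^0_\Psi(M)$. Under multi-symplecticity this becomes $f\in\Hh^0(M)$, so $\alpha^n\in\ast\Hh^0(M) = \Hh^n(M)$, proving $\Hh^n_\Psi(M)=\Hh^n(M)$. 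Finally, Lemma \ref{lem:elliptic multi-symplectic} tells us that $\Ll_{\Psi;n}$ is underdetermined elliptic, hence $n$-regular by the standard ellipticity result cited after Definition \ref{def:reg-cohom}, and Theorem \ref{thm:cohom-harmonic}(1) then delivers the isomorphism $H^n_\Psi(M)\cong\Hh^n_\Psi(M)=\Hh^n(M)$. No substantial obstacle appears; the only subtlety is that the $l=0$ identification needs no ellipticity (the adjoint vanishes for trivial degree reasons), whereas the $l=n$ case genuinely relies on the ellipticity input from Lemma \ref{lem:elliptic multi-symplectic}.
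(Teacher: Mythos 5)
Your proof is correct and follows essentially the same route as the paper: the identity $\Ll_\Psi f=\imath_{df^{\#}}\Psi$ from (\ref{eq:LK-nabla}), the trivial vanishing of $\Im\Ll_{\Psi;0}$ and of $\Ll_\Psi^{*}|_{\Om^0(M)}$ for degree reasons, multi-symplecticity to force $df=0$, and Lemma \ref{lem:elliptic multi-symplectic} with Theorem \ref{thm:cohom-harmonic}(1) combined with the Hodge-$\ast$ isomorphism (\ref{eq:ast-Hodge}) for degree $n$. Your explicit justification of why the $l=0$ identification needs no ellipticity is a point the paper leaves implicit, but it is the same argument.
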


\begin{proof}
Let $f \in \Om^0(M) = C^\infty(M)$. Then by (\ref{eq:LK-nabla}), we have 
\begin{align*}
\Ll_\Psi (f) = \imath_{df^\#} \Psi,
\end{align*}
which implies the statement for $H_\Psi^0(\Om^\ast(M))$.

If $\Psi$ is multi-symplectic, then $\imath_{df^\#} \Psi = 0$ iff $df = 0$, showing that $\Hh_\Psi^0(M) = \Hh^0(M)$. Moreover, using Lemma \ref{lem:elliptic multi-symplectic} for $l = n$, we see from Theorem \ref{thm:cohom-harmonic}(1) that $H_\Psi^n(\Om^\ast(M)) = \Hh_\Psi^n(M)$, and the latter space equals $\ast \Hh_\Psi^0(M)$ by (\ref{eq:ast-Hodge}), which by the above equals $\ast \Hh^0(M) = \Hh^n(M)$.
\end{proof}

Observe that $H_\Psi^0(\Om^\ast(M))$ is infinite dimensional if $\Psi$ is not multi-symplectic.

\begin{proposition} 
\label{prop:chomom-LK 1-form}
Let $\Psi \in \Om^{2k}(M)$ be a parallel multi-symplectic form on the oriented Riemannian manifold $(M, g)$. Then
\[
\ker (\Ll_{\Psi; 2k}) = \{ \alpha \in \Om^1(M) \mid \Ll_{\alpha^\#} (\ast \Psi) = 0 \quad \mbox{and} \quad d^\ast \alpha = 0\}.
\]
In particular, if $k \geq 2$ then $\ker \Ll_{\Psi; 2k} = \Hh^1_\Psi(M) \cong H^1_\Psi(M)$ and
\[
\Hh^{n-1}_\Psi(M) = \{ \alpha \in \Om^{n-1}(M) \mid \Ll_{(\ast\alpha)^\#} (\ast \Psi) = 0 \quad \mbox{and} \quad d\alpha = 0\}.
\]
\end{proposition}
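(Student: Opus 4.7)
The plan is to prove the characterization of $\ker \Ll_{\Psi;2k}$ first; the remaining claims then follow from a degree count and the Hodge--$\ast$ duality of (\ref{eq:ast-Hodge}).

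For $\alpha\in\Om^1(M)$, formula (\ref{eq:LK-d*}) reads $\Ll_\Psi\alpha=(d^\ast\alpha)\Psi-d^\ast(\alpha\wedge\Psi)$. I first rewrite the second summand in terms of $\Ll_{\alpha^\#}\ast\Psi$. Since $\Psi$ has even degree $2k$, (\ref{eq:ast-contr}) gives $\imath_{\alpha^\#}\ast\Psi=\ast(\alpha\wedge\Psi)$; because $\ast\Psi$ is parallel hence closed, Cartan's formula yields
\[
\Ll_{\alpha^\#}\ast\Psi \;=\; d\,\imath_{\alpha^\#}\ast\Psi \;=\; d\ast(\alpha\wedge\Psi).
\]
Combined with $d^\ast=\pm\ast d\ast$ on $(2k+1)$-forms, this gives $d^\ast(\alpha\wedge\Psi)=\pm\ast\Ll_{\alpha^\#}\ast\Psi$, so that after applying $\ast$ the condition $\Ll_\Psi\alpha=0$ rewrites as
\[
(d^\ast\alpha)\,\ast\Psi \;=\; \pm\,\Ll_{\alpha^\#}\ast\Psi. \qquad (\star)
\]
The inclusion $\supseteq$ of the proposition is then immediate.

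For $\subseteq$, assume $\Ll_\Psi\alpha=0$. Applying $d$ to $(\star)$, and using $d\ast\Psi=0$ together with $d\Ll_{\alpha^\#}=\Ll_{\alpha^\#}d$, gives $d(d^\ast\alpha)\wedge\ast\Psi=0$. By (\ref{eq:ast-contr}) one has $\gamma\wedge\ast\Psi=-\ast\imath_{\gamma^\#}\Psi$ for any 1-form $\gamma$, so $\imath_{(d(d^\ast\alpha))^\#}\Psi=0$; multi-symplecticity of $\Psi$ then forces $d(d^\ast\alpha)=0$. On a closed $M$ (the intended setting of the paper), pairing $(d^\ast\alpha)\Psi=d^\ast(\alpha\wedge\Psi)$ with $(d^\ast\alpha)\Psi$ in $L^2$ yields
\[
\|(d^\ast\alpha)\Psi\|_{L^2}^2 \;=\; \langle d((d^\ast\alpha)\Psi),\alpha\wedge\Psi\rangle_{L^2} \;=\; \langle d(d^\ast\alpha)\wedge\Psi,\alpha\wedge\Psi\rangle_{L^2}\;=\;0,
\]
where I have used $d\Psi=0$ and $d(d^\ast\alpha)=0$. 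Since multi-symplecticity implies $\Psi$ is nonzero pointwise, this forces $d^\ast\alpha=0$, and then $\Ll_{\alpha^\#}\ast\Psi=0$ follows from $(\star)$.

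The remaining assertions follow quickly. For $k\geq 2$, $\Om^{2-2k}(M)=0$, so $\Im\Ll_{\Psi;1}=0$ and the formal adjoint $\Ll_\Psi^\ast$ on $\Om^1(M)$ vanishes for trivial degree reasons; hence $H^1_\Psi(M)=\ker\Ll_{\Psi;2k}=\Hh^1_\Psi(M)$. For $\Hh^{n-1}_\Psi(M)$, the Hodge--$\ast$ isomorphism (\ref{eq:ast-Hodge}) reduces matters to $\Hh^1_\Psi(M)$: $\alpha\in\Om^{n-1}(M)$ lies in $\Hh^{n-1}_\Psi(M)$ iff $\ast\alpha\in\Hh^1_\Psi(M)$, which by the characterization just proved is equivalent to $\Ll_{(\ast\alpha)^\#}\ast\Psi=0$ together with $d^\ast(\ast\alpha)=0$; the latter condition is the same as $d\alpha=0$ since $d^\ast\ast=\pm\ast d$ on $(n-1)$-forms. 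The main obstacle in this plan is the final pointwise vanishing $d^\ast\alpha=0$: one must upgrade $d(d^\ast\alpha)=0$ (an algebraic consequence of multi-symplecticity) to the vanishing of $d^\ast\alpha$ itself, and for this the closedness of $M$ is genuinely used in the $L^2$ argument above, although on irreducible holonomy manifolds such as those with holonomy $G_2$ or $\Sp$ a purely pointwise shortcut via Schur's lemma is available, applied to the parallel symmetric tensor $C_{ij}=\langle\imath_{e_i}\Psi,\imath_{e_j}\Psi\rangle$.
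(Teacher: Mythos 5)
Your argument is correct and follows essentially the same route as the paper: rewrite $\Ll_\Psi\alpha=\ast\bigl((d^\ast\alpha)\ast\Psi+\Ll_{\alpha^\#}\ast\Psi\bigr)$ via (\ref{eq:ast-contr}) and Cartan's formula, differentiate and use multi-symplecticity to force $d(d^\ast\alpha)=0$, then conclude $d^\ast\alpha=0$ and deduce the remaining claims from degree reasons and (\ref{eq:ast-Hodge}). Your closing $L^2$ pairing is just a variant of the integration-by-parts step the paper leaves implicit in passing from $dd^\ast\alpha=0$ to $d^\ast\alpha=0$, and your observation that closedness of $M$ is genuinely used there applies equally to the paper's own proof.
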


\begin{proof} We calculate for $\alpha \in \Om^1(M)$ 
\begin{align*}
\Ll_\Psi(\alpha) \stackrel{(\ref{eq:LK-d*})}=& (d^\ast\alpha) \cdot \Psi - d^\ast(\alpha \wedge \Psi) = (d^\ast\alpha) \cdot \Psi + \ast d \ast (\alpha \wedge \Psi)\\
\stackrel{(\ref{eq:ast-contr})}=& (d^\ast\alpha) \cdot \Psi + \ast d (\imath_{\alpha^\#} \ast \Psi)\\
\stackrel{(\ref{prop-Hodge})}=& \ast \big((d^\ast\alpha) \cdot \ast \Psi + \Ll_{\alpha^\#} (\ast \Psi)\big),
\end{align*}
using again $d^\ast\Psi = 0$ and hence, $d (\imath_{\alpha^\#} \ast \Psi) = \Ll_{\alpha^\#} (\ast \Psi)$ in the last step.

Thus, $\Ll_\Psi(\alpha) = 0$ iff $\Ll_{\alpha^\#} (\ast \Psi) = - (d^\ast\alpha) \cdot \ast \Psi$. If this is the case, taking the exterior derivative implies that
\begin{align*}
- dd^\ast\alpha \wedge \ast \Psi \stackrel{d\ast \Psi = 0}=& - d((d^\ast\alpha) \cdot \ast \Psi)\\
 =& d\Ll_{\alpha^\#} (\ast \Psi) = \Ll_{\alpha^\#} (d\ast \Psi) = 0,
\end{align*}
whence
\[
0 = - dd^\ast\alpha \wedge \ast \Psi \stackrel{(\ref{eq:ast-contr})}= \ast \imath_{(dd^\ast\alpha)^\#} \Psi,
\]
so that $\imath_{(dd^\ast\alpha)^\#} \Psi = 0$ and hence, by (\ref{eq:Psi-nondeg}), $dd^\ast\alpha = 0$. This implies that $d^\ast \alpha = 0$ and hence, $\Ll_{\alpha^\#} (\ast \Psi) = 0$, showing the statement on $\ker \Ll_\Psi|_{\Om^1(M)}$.

The last statement then follows from the definition and (\ref{eq:ast-Hodge}).
\end{proof}

%%%%%%%%%%%%%%%%%%%%%%%%%%%%%%%%%%%%%%%%%%%%%%%%%%%%%%%%%
\section{The Fr\"olicher-Nijenhuis cohomology of $G_2$-manifolds}\label{sec:g2}
%%%%%%%%%%%%%%%%%%%%%%%%%%%%%%%%%%%%%%%%%%%%%%%%%%%%%%%%%

In this section we shall apply the cohomology definition from the preceding section to the parallel $4$-form in a $G_2$-manifold. We first collect some basic facts on the representation of the exceptional group $G_2$, see e.g. \cite{Humphreys}.

Let $V$ be an oriented $7$-dimensional vector space. A {\em $G_2$-structure on $V$ }is a form $\varphi \in \Lambda^3 V^\ast$ for which there is a positively oriented basis $(e_i)$ of $V$ with the dual basis $(e^i)$ of $V^\ast$ such that
\begin{equation} \label{varphi}
\varphi = e^{123} + e^{145} + e^{167} + e^{246} - e^{257} - e^{347} - e^{356},
\end{equation}
where $e^{i_1 \dots i_k}$ is short for $e^{i_1} \wedge \cdots \wedge e^{i_k}$. Setting $\vol := e^{1 \cdots 7}$, $\varphi$ uniquely determines an inner product $g_\varphi$ by the identity
\begin{equation} \label{eq:form-1def}
g_\varphi(u,v)\; \vol = \dfrac16 ((\imath_u \varphi) \wedge (\imath_v \varphi) \wedge \varphi),
\end{equation}
and it follows that any oriented basis $(e_i)$ for which (\ref{varphi}) holds is orthonormal w.r.t. $g_\varphi$. Thus, the Hodge-dual of $\varphi$ w.r.t. $g_\varphi$ is given by 
\begin{equation} \label{varphi*}
\ast \varphi = e^{4567} + e^{2367} + e^{2345} + e^{1357} - e^{1346} - e^{1256} - e^{1247}.
\end{equation}

The stabilizer of $\varphi$ is known to be the exceptional $14$-dimensional simple Lie group $G_2 \subset Gl(V)$, and the elements of $G_2$ preserve both $g_\varphi$ and $\vol$, i.e., $G_2 \subset SO(V, g_\varphi)$.

We summarize important known facts about the decomposition of exterior powers of $G_2$-modules into irreducible summands which are well known, see e.g. \cite[Section 2]{Kar2005}. We denote by $V_k$ the $k$-dimensional irreducible $G_2$-module if there is a unique such module. For instance, $V_7$ is the irreducible $7$-dimensional $G_2$-module from above, and $V_7^\ast \cong V_7$. For its exterior powers, we obtain the decompositions
\begin{equation} \label{eq:DiffForm-V7}
\begin{array}{rlrl}
\Lambda^0 V_7 \cong \Lambda^7 V_7 \cong V_1, \qquad
& \Lambda^2 V_7  \cong \Lambda^5 V_7 \cong V_7 \oplus  V_{14},\\[2mm]
\Lambda^1 V_7 \cong \Lambda^6 V_7 \cong V_7, \qquad
& \Lambda^3 V_7 \cong \Lambda^4 V_7 \cong V_1 \oplus V_7 \oplus V_{27},
\end{array}
\end{equation}
where $\Lambda^k V_7 \cong \Lambda^{7-k} V_7$ due to $G_2$-invariance of the Hodge isomorphism $\ast: \Lambda^k V_7 \to \Lambda^{7-k} V_7^\ast \cong \Lambda^{7-k} V_7$. We denote by $\Lambda^k_l V_7 \subset \Lambda^k V_7$ the subspace isomorphic to $V_l$ in the above notation. Evidently, $\Lambda^3_1 V_7$ and $\Lambda^4_1 V_7$ are spanned by $\varphi$ and $\ast \varphi$, respectively. For the remaining spaces in the decompositions of $\Lambda^k V_7$ we obtain the following descriptions.

\begin{align}
\nonumber
\Lambda^2_7 V_7 &= \{ \imath_v \varphi \mid v \in V_7\},\\
\nonumber
\Lambda^2_{14} V_7 &= \{ \alpha^2 \in \Lambda^2 V_7 \mid \alpha^2 \wedge \ast \varphi = 0\},\\
\nonumber
\Lambda^3_7 V_7 &= \{ \imath_v \ast \varphi \mid v \in V_7\} = \{ \ast (v^\flat \wedge \varphi) \mid v \in V_7\},\\
\label{decom-L-V7}
\Lambda^3_{27} V_7 &= \{ \alpha^3 \in \Lambda^3 V_7 \mid \alpha^3 \wedge \varphi = \alpha^3 \wedge \ast \varphi = 0\},\\
\nonumber
\Lambda^4_7 V_7 &= V_7 \wedge \varphi = \{ v \wedge \varphi \mid v \in V_7\},\\
\nonumber
\Lambda^4_{27} V_7 &= \{ \alpha^4 \in \Lambda^4 V_7 \mid \alpha^4 \wedge \varphi = \ast \alpha^4 \wedge \varphi = 0\},\\
\nonumber
\Lambda^5_7 V_7 &= V_7 \wedge \ast \varphi = \{ v \wedge \ast \varphi \mid v \in V_7\},\\
\nonumber
\Lambda^5_{14} V_7 &= \{ \alpha^5 \in \Lambda^5 V_7 \mid \alpha^5 \wedge (\imath_v \varphi) = 0 \; \mbox{for all $v \in V_7$}\}.
\end{align}

A {\em $G_2$-manifold }is a pair $(M, \varphi)$ consisting of a $7$-dimensional oriented manifold $M$ with a parallel $3$-form $\varphi$ such that at each $p \in M$ there is an oriented basis $(e_i)$ of $T_pM$ with dual basis $(e^i)$ of $T^\ast_pM$ such that $\varphi_p$ has the form (\ref{varphi}). Then $\varphi$ induces a Riemannian metric $g_\varphi$ on $M$, and (\ref{eq:DiffForm-V7}) induces a decomposition of differential forms and canonical projections by
\begin{equation} \label{eq:DiffForm-G2}
\Om^k_l(M) := \Gamma(M, \Lambda^k_l T^\ast M), \qquad \pi^k_l: \Om^k(M) \longrightarrow \Om^k_l(M).\end{equation}

By the Weitzenb\"ock formula, the Laplace operator $\triangle$ leaves sections of parallel subbundles of $\Lambda^k T^\ast M$ invariant, whence for $\alpha^k \in \Hh^k(M)$ each projection $\pi^k_l(\alpha^k)$ is also harmonic, so that
\begin{equation} \label{eq:bkl}
\begin{array}{lll}
\Hh^k(M) & = & \bigoplus_l (\Om^k_l (M) \cap \Hh^k(M)) =: \bigoplus_l \Hh^k_l(M),\\[5mm]
b^k(M) & = & \sum_l b^k_l(M),
\end{array}
\end{equation}
where $b^k_l(M) := \dim \Hh^k_l(M)$. Moreover, we let
\begin{equation} \label{eq:def-Om-kd}
\begin{array}{lll}
\Om^{k;d}_l(M) & := & \Om^k_l (M) \cap d\Om^{k-1}(M),\\[2mm]
\Om^{k;d^\ast}_l(M) & := & \Om^k_l (M) \cap d^\ast\Om^{k+1}(M)
\end{array}
\end{equation}
It is our aim to investigate these spaces for a $G_2$-manifold $(M, \varphi)$. First of all, the Hodge-$\ast$ gives isomorphisms
\begin{equation} \label{eq:Om-kd-*}
\Om^{k;d}_l(M) \xlongleftrightarrow{\ast} \Om^{7-k;d^\ast}_l(M)
\end{equation}
Since the decomposition of $\Lambda^k T^\ast M$ into its irreducible components is preserved by parallel translation, it follows from the Weitzenb\"ock formula (see e.g. \cite[(1.154)]{Besse1987}) that their sections are preserved by the Laplace operator, and since this operator also commutes with $d$ and $d^\ast$, we have
\begin{equation} \label{eq:Om-kd-Laplace}
\triangle \Om^{k;d}_l(M) = \Om^{k;d}_l(M) \qquad \mbox{and} \qquad \triangle \Om^{k;d^\ast}_l(M) = \Om^{k;d^\ast}_l(M)
\end{equation}
as well as
\begin{equation} \label{eq:Om-pi-Laplace}
\triangle \pi^k_l(\alpha^k) = \pi^k_l(\triangle \alpha^k) \qquad \mbox{for all $\alpha^k \in \Om^k(M)$}.
\end{equation}

\begin{definition} \label{def-V3new}
For $\beta^2 \in d\Om^1(M)$, we define the $4$-form
\begin{equation} \label{eq:def-V3-beta}
\alpha_{\beta^2}^4 := (d^\ast \beta^2) \wedge \varphi - d^\ast(\beta^2 \wedge \varphi).
\end{equation}
Moreover, define
\begin{equation} \label{eq:def-V3}
\begin{array}{l}
V_3^{d^\ast}(M) := \{ \ast \alpha_{\beta^2}^4 \mid \beta^2 \in d\Om^1(M)\} \quad \mbox{and}\\[2mm] V_4^d(M) := \{ \alpha_{\beta^2}^4 \mid \beta^2 \in d\Om^1(M)\}.
\end{array}
\end{equation}
\end{definition}

In the proofs of the following results we shall utilize some facts and identities provided in the Appendix \ref{sec:appendix}.

\begin{lemma} \label{lem:V3}
For every $\beta^2 \in d\Om^1(M)$, we have $\alpha_{\beta^2}^4 \in d\Om^3(M) \cap (\Om^{4;d}_{27}(M))^\perp$. Furthermore,
\begin{equation} \label{eq:V3-prop}
\alpha_{\beta^2}^4 \wedge \varphi = 0 \quad \mbox{and} \quad \ast \alpha_{\beta^2}^4 \wedge \varphi = -2 d\ast \beta^2 \in d\Om^5(M).
\end{equation}
\end{lemma}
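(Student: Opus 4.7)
The plan is to dispatch each of the four assertions in Lemma~\ref{lem:V3} in turn, combining global arguments from Section~\ref{sec:pre} with $G_2$-algebraic identities that I would collect (or find already collected) in the appendix. The central observation driving everything is that the definition (\ref{eq:def-V3-beta}) realises $\alpha^4_{\beta^2} = \Ll_{\hat\varphi}(\beta^2)$: running the argument leading to (\ref{eq:LK-d*}) with $\Psi=\varphi$ and $\alpha^l = \beta^2$ produces the same formula, since the parity sign stemming from $|\varphi|=3$ is absorbed by the even degree of $\beta^2$. Writing $\beta^2 = d\gamma^1$ and invoking (\ref{eq:L-d-commute}) with $|\hat\varphi|=2$ then gives $\alpha^4_{\beta^2} = d\Ll_{\hat\varphi}(\gamma^1) \in d\Om^3(M)$. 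For the orthogonality to $\Om^{4;d}_{27}(M)$, split $\alpha^4_{\beta^2} = (d^\ast\beta^2)\wedge\varphi - d^\ast(\beta^2\wedge\varphi)$: the first summand lies in $\Om^4_7(M) = \Om^1(M)\wedge\varphi$ and is pointwise orthogonal to $\Om^4_{27}(M)$, and for $\sigma = d\tau \in \Om^{4;d}_{27}(M)$ the second summand gives $\la d^\ast(\beta^2\wedge\varphi),\sigma\ra_{L^2} = \la\beta^2\wedge\varphi, d\sigma\ra_{L^2} = 0$.

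For $\alpha^4_{\beta^2}\wedge\varphi = 0$, switch to a frame that is normal at $p$, so that (\ref{eq:LK-nabla}) reads $\alpha^4_{\beta^2}|_p = (\imath_{e_i}\varphi)\wedge\nabla_{e_i}\beta^2$. Since $\imath_{e_i}\varphi \in \Om^2_7(M)$, the $G_2$-identity $\alpha^2\wedge\varphi = 2\ast\alpha^2$ for $\alpha^2 \in \Om^2_7(M)$ together with $\ast(\imath_v\varphi) = v^\flat\wedge\ast\varphi$ coming from (\ref{eq:ast-contr}) give $\imath_{e_i}\varphi\wedge\varphi = 2\, e^i\wedge\ast\varphi$. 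Substituting and using $e^i\wedge\nabla_{e_i}\beta^2|_p = d\beta^2|_p$ from (\ref{d*-formula}), one obtains $\alpha^4_{\beta^2}\wedge\varphi|_p = 2(d\beta^2)\wedge\ast\varphi|_p = 0$, since $\beta^2 = d\gamma^1$ is closed.

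The final identity $\ast\alpha^4_{\beta^2}\wedge\varphi = -2\,d\ast\beta^2$ is the main obstacle. Applying $\ast$ termwise to the definition, (\ref{eq:ast-contr}) gives $\ast((d^\ast\beta^2)\wedge\varphi) = -\imath_{(d^\ast\beta^2)^\#}\ast\varphi$, while the formula $d^\ast = (-1)^k\ast d\ast$ on $k$-forms in dimension~$7$ (with $\ast\ast=\mathrm{id}$) gives $\ast d^\ast(\beta^2\wedge\varphi) = -d\ast(\beta^2\wedge\varphi)$. Wedging with $\varphi$, the first summand contributes $-\imath_v\ast\varphi\wedge\varphi = -4\ast v^\flat = -4\,d\ast\beta^2$, using the $G_2$-identity $\imath_v\ast\varphi\wedge\varphi = 4\ast v^\flat$ together with $\ast d^\ast\beta^2 = d\ast\beta^2$. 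For the second summand, the crucial ingredient is the pointwise identity
\[
\ast(\beta^2\wedge\varphi)\wedge\varphi \;=\; \beta^2\wedge\varphi + 2\ast\beta^2,
\]
which follows from the spectral decomposition of the $G_2$-equivariant operator $T\beta := \ast(\beta\wedge\varphi)$ on $\Om^2(M) = \Om^2_7(M)\oplus\Om^2_{14}(M)$, whose eigenvalues on the two summands are $2$ and $-1$ respectively. Given this identity, $d(\ast(\beta^2\wedge\varphi)\wedge\varphi) = d(\beta^2\wedge\varphi) + 2\,d\ast\beta^2 = 2\,d\ast\beta^2$ by $d\beta^2 = 0 = d\varphi$, and this equals $d\ast(\beta^2\wedge\varphi)\wedge\varphi$ since $d\varphi = 0$. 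Combining, $\ast\alpha^4_{\beta^2}\wedge\varphi = -4\,d\ast\beta^2 + 2\,d\ast\beta^2 = -2\,d\ast\beta^2 \in d\Om^5(M)$. The technical heart of the argument is the algebraic identity for $T$, whose verification from the $G_2$-decomposition of $\Lambda^2 V_7^\ast$ is elementary, and I would expect either to cite or to prove it in the appendix alongside the related formulas used above.
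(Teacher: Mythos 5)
Your proof is correct, and all the sign checks you gesture at do go through (in particular $\Ll_{\hat\varphi}\beta^2=(d^\ast\beta^2)\wedge\varphi-d^\ast(\beta^2\wedge\varphi)$ holds for the odd-degree parallel form $\varphi$ acting on the even-degree form $\beta^2$, and your identity $\ast(\beta^2\wedge\varphi)\wedge\varphi=\beta^2\wedge\varphi+2\ast\beta^2$ is exactly the content of Lemma~\ref{Lambda2-V7} repackaged). The interesting divergence from the paper is in the exactness claim: the paper proves $d\alpha^4_{\beta^2}=0$ via $(\triangle\beta^2)\wedge\varphi-\triangle(\beta^2\wedge\varphi)=0$, then shows $\alpha^4_{\triangle\beta^2}=\triangle\alpha^4_{\beta^2}$ so that $\alpha^4_{\beta^2}\in\triangle\Om^4(M)=\Hh^4(M)^\perp$, and concludes exactness from the Hodge decomposition --- an argument that genuinely needs $M$ closed. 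You instead recognize $\alpha^4_{\beta^2}=\Ll_{\hat\varphi}(\beta^2)$ and use $\Ll_{\hat\varphi}\circ d=d\circ\Ll_{\hat\varphi}$ (the case $k=2$ of (\ref{eq:L-d-commute})) to exhibit the explicit primitive $\imath_{\hat\varphi}(\beta^2)$; this is more direct, fits the Fr\"olicher--Nijenhuis framework of Section~\ref{sec:pre} more naturally, and does not use compactness for this part (compactness is of course still needed for the $L^2$-orthogonality statement, which you and the paper prove the same way). Your pointwise normal-frame derivation of $\alpha^4_{\beta^2}\wedge\varphi=0$ also differs cosmetically from the paper's global manipulation with $d$ and $\ast$, but both reduce to $d\beta^2=0$; the computation of $\ast\alpha^4_{\beta^2}\wedge\varphi$ is essentially identical in the two arguments, resting on (\ref{eq:form16}) and the eigenvalue decomposition of $\beta\mapsto\ast(\beta\wedge\varphi)$ on $\Lambda^2_7\oplus\Lambda^2_{14}$.
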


\begin{proof}
First observe that
\[
d\alpha^4_{\beta^2} = (dd^\ast \beta^2) \wedge \varphi - dd^\ast (\beta^2 \wedge \varphi) \stackrel{d\beta^2 = 0}= (\triangle \beta^2) \wedge \varphi - \triangle (\beta^2 \wedge \varphi) \stackrel{(\ref{eq:Laplace-commute})} = 0.
\]
Note that
\[
\alpha^4_{\triangle \beta^2} = (\triangle d^\ast \beta^2) \wedge \varphi - d^\ast((\triangle \beta^2) \wedge \varphi) \stackrel{(\ref{eq:Laplace-commute})} = \triangle \alpha^4_{\beta^2},
\]
and since $d\Om^1(M) = \triangle d\Om^1(M)$, it follows that $\alpha^4_{\beta^2} \in \triangle \Om^4(M) = \Hh^4(M)^\perp$, and this together with $d\alpha^4_{\beta^2} = 0$ implies that $\alpha^4_{\beta^2} \in d\Om^3(M)$. If $\gamma^4 \in \Om^{4;d}_{27}(M)$, then
\[
\la \alpha^4_{\beta^2}, \gamma^4\ra_{L^2} = \int_M d^\ast \beta^2 \wedge \underbrace{\varphi \wedge \ast \gamma^4}_{=0} - \la \beta^2 \wedge \varphi, \underbrace{d \gamma^4}_{=0}\ra_{L^2} = 0,
\]
so that $\alpha^4_{\beta^2} \in (\Om^{4;d}_{27}(M))^\perp$. Furthermore,
\begin{align*}
\alpha^4_{\beta^2} \wedge \varphi &= (d^\ast \beta^2) \wedge \underbrace{\varphi \wedge \varphi}_{=0} + (\ast d \ast (\beta^2 \wedge \varphi)) \wedge \varphi = (d \ast (\beta^2 \wedge \varphi)) \wedge \ast \varphi\\
& = d(\ast (\beta^2 \wedge \varphi) \wedge \ast \varphi) \stackrel{(\ref{decom-L-V7})}= d(\pi^2_7(\ast (\beta^2 \wedge \varphi)) \wedge \ast \varphi)\\
& \stackrel{\text{Lemma}\; \ref{Lambda2-V7}}= 2 d(\pi^2_7(\beta^2) \wedge \ast \varphi) \stackrel{(\ref{decom-L-V7})}= 2 d(\beta^2 \wedge \ast \varphi) \stackrel{d\beta^2 = 0}= 0,
\end{align*}
and
\begin{align*}
\ast \alpha^4_{\beta^2} \wedge \varphi &= (\ast (d^\ast \beta^2 \wedge \varphi)) \wedge \varphi + (d \ast (\beta^2 \wedge \varphi)) \wedge \varphi\\
& \stackrel{(\ref{eq:form16}), \text{Lemma}\; \ref{Lambda2-V7}}= -4 \ast d^\ast \beta^2 + d ((2 \pi^2_7(\beta^2) - \pi^2_{14}(\beta^2)) \wedge \varphi)\\
& \stackrel{\text{Lemma}\; \ref{Lambda2-V7}}= -4 d \ast \beta^2 + d(4 \ast \pi^2_7(\beta^2) + \ast \pi^2_{14}(\beta^2))\\
& = -2 d \ast \beta^2 + d \ast((2 \pi^2_7(\beta^2) - \pi^2_{14}(\beta^2))\\
&\stackrel{\text{Lemma}\; \ref{Lambda2-V7}}= -2 d \ast \beta^2 + d \ast (\ast(\beta^2 \wedge \varphi)) = -2 d\ast \beta^2.
\end{align*}

\vspace{-7mm}
\end{proof}

\begin{proposition} \label{prop:diagrams}
For a closed $G_2$-manifold $(M, \varphi)$ and the spaces defined above, we have the following:
\begin{enumerate}
\item
$\Om^{2;d}_7(M) = \Om^{2;d}_{14}(M) = 0$ and $\Om^{5;d^\ast}_7(M) = \Om^{5;d^\ast}_{14}(M) = 0$.
\item
$\Om^{3;d}_1(M) = \Om^{3;d^\ast}_1(M) = 0$ and $\Om^{4;d}_1(M) = \Om^{4;d^\ast}_1(M) = 0$.
\item
$\Om^{3;d}_7(M) = \Om^{4;d^\ast}_7(M) = 0$.
\item
We have the following commutative diagrams of isomorphisms:

\begin{equation} \label{eq:diagrams}
\begin{xy}
\xymatrix{
\Om^{2;d^\ast}_7(M) \ar@/^2mm/[r]^d \ar@=[d] & dd^\ast(\Om^3_1(M)) \ar@/^2mm/[l]^{d^\ast} \ar@=[d] & 
\Om^{3;d^\ast}_7(M) \ar@/^2mm/[r]^d \ar@=[d] & dd^\ast \Om^4_1(M) \ar@/^2mm/[l]^{d^\ast} \ar@=[d]\\
\Om^{5;d}_7(M) \ar@/^2mm/[r]^{d^\ast} \ar@=[u]_\ast & d^\ast d(\Om^4_1(M)) \ar@/^2mm/[l]^d \ar@=[u]_\ast & \Om^{4;d}_7(M) \ar@/^2mm/[r]^{d^\ast} \ar@=[u]_\ast & d^\ast d \Om^3_1(M) \ar@/^2mm/[l]^d \ar@=[u]_\ast\\
\Om^{2;d^\ast}_{14}(M) \ar@/^2mm/[r]^d \ar@=[d] & \Om^{3;d}_{27}(M) \ar@/^2mm/[l]^{d^\ast} \ar@=[d] & 
\Om^{3,d^\ast}_{27}(M) \stackrel{\perp}\oplus V_3^{d^\ast}(M) \ar@/^4mm/[d]^d \ar@=[d]
\\
\Om^{5;d}_{14}(M) \ar@/^2mm/[r]^{d^\ast} \ar@=[u]_\ast & \Om^{4;d^\ast}_{27}(M) \ar@/^2mm/[l]^d \ar@=[u]_\ast & \Om^{4,d}_{27}(M) \stackrel{\perp}\oplus V_4^d(M) \ar@/^4mm/[u]^{d^\ast} \ar@=[u]_\ast
}
\end{xy}
\end{equation}
In fact, for the spaces in these diagrams we have the descriptions
\begin{equation} \label{def-U3-alt}
\Om^{2;d^\ast}_7(M) = \{ \imath_{(df)^\#} \varphi \mid f \in C^\infty(M)\} = d^\ast(\Om^3_1(M)),
\end{equation}
\begin{equation} \label{def-U4-alt}
\Om^{3;d^\ast}_7(M) = \{ \imath_{(df)^\#} \ast \varphi \mid f \in C^\infty(M)\} = d^\ast(\Om^4_1(M)),
\end{equation}
and
\begin{equation} \label{def-V3}
\Om^{3;d^\ast}_{27}(M) \stackrel{\perp}\oplus V_3^{d^\ast}(M) = \{ \alpha^3 \in d^\ast \Om^4(M) \mid \alpha^3 \wedge \ast \varphi = 0, d(\alpha^3 \wedge \varphi) = 0\}.
\end{equation}
\end{enumerate}
\end{proposition}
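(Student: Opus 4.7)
The plan is to prove parts (1)--(3) first, each asserting the vanishing of some subspace of exact or coexact forms of a specific $G_2$-type, and then assemble these with Lemma~\ref{lem:V3} to establish the descriptions and commutative diagrams of (4). Three ingredients are used throughout: the $G_2$-decomposition identities from the appendix (especially Lemma~\ref{Lambda2-V7}); the Hodge decomposition on the closed manifold together with the fact that $\triangle$ preserves each parallel subbundle $\Om^k_l(M)$; and the parallelism plus multi-symplecticity of $\varphi$ and $\ast\varphi$.

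For part (1), Lemma~\ref{Lambda2-V7} yields identities of the form $\beta^2 \wedge \varphi = c_l \ast \beta^2$ for $\beta^2 \in \Om^2_l(M)$ with nonzero constant $c_l$ when $l \in \{7,14\}$. Applied to $\beta^2 = d\alpha$ and combined with $d\varphi = 0$, taking $d$ gives $c_l\, d\ast d\alpha = 0$, i.e.\ $d^\ast d\alpha = 0$; since also $d(d\alpha) = 0$, the exact form $d\alpha$ is harmonic and therefore vanishes. The statements for $\Om^{5;d^\ast}_l(M)$ follow by Hodge-$\ast$ duality (\ref{eq:Om-kd-*}). For part (2), if $f\varphi = d\alpha$, then $df \wedge \varphi = d(f\varphi) = 0$; injectivity of $\wedge\varphi\colon \Om^1(M) \to \Om^4(M)$ (the equivariant bundle map $V_7 \to \Lambda^4_7$) forces $df = 0$, so $f\varphi$ is parallel hence harmonic, and being exact it vanishes. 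For coexact $f\varphi = d^\ast \eta$ one uses $d^\ast(f\varphi) = -\imath_{(df)^\#}\varphi$ (since $d^\ast\varphi = 0$) together with multi-symplecticity of $\varphi$ to reach the same conclusion; the cases of $\Om^4_1(M) = C^\infty(M)\cdot\ast\varphi$ are strictly analogous via $\ast\varphi$.

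Part (3) is the main obstacle. By Hodge-$\ast$ duality it suffices to show $\Om^{4;d^\ast}_7(M) = 0$. If $v^\flat \wedge \varphi = d^\ast \eta$, then $d^\ast d^\ast = 0$ gives $d^\ast(v^\flat \wedge \varphi) = 0$; using $\ast(v^\flat \wedge \varphi) = -\imath_v\ast\varphi$ from (\ref{eq:ast-contr}) together with $d\ast\varphi = 0$, one computes
\[
d^\ast(v^\flat \wedge \varphi) = \ast d\ast(v^\flat \wedge \varphi) = -\ast d(\imath_v \ast\varphi) = -\ast \Ll_v \ast\varphi,
\]
so $\Ll_v \ast\varphi = 0$. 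Thus $v$ preserves the $G_2$-structure and is in particular a Killing vector field for $g$. Since a torsion-free $G_2$-structure is Ricci-flat, Bochner's identity on the closed manifold $M$ forces every Killing field to be parallel; hence $v^\flat \wedge \varphi$ is parallel, therefore harmonic. But as $d^\ast\eta$ it is also $L^2$-orthogonal to $\Hh^4(M)$, so $v^\flat \wedge \varphi = 0$, and injectivity of $\wedge\varphi$ yields $v = 0$.

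For part (4), the alternative descriptions (\ref{def-U3-alt}), (\ref{def-U4-alt}) follow from the formulas $d^\ast(f\varphi) = -\imath_{(df)^\#}\varphi$ and $d^\ast(f\ast\varphi) = -\imath_{(df)^\#}\ast\varphi$, combined with the observation that any element of $d^\ast\Om^k(M)$ may be rewritten as $d^\ast d\eta$ after Hodge decomposition of the preimage; parts (1)--(3) then annihilate all contributions except those coming from $\Om^3_1(M)$ or $\Om^4_1(M)$, respectively. The characterization (\ref{def-V3}) is Lemma~\ref{lem:V3} packaged with the decomposition $\Om^3 = \Om^3_1 \oplus \Om^3_7 \oplus \Om^3_{27}$ and its Hodge-$\ast$ dual. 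For the commutativity and bijectivity in the diagrams (\ref{eq:diagrams}), the key observation is that on a $d^\ast$-exact form $\alpha$ one has $dd^\ast\alpha = 0$, so $\triangle\alpha = d^\ast d\alpha$ is invertible (since $\triangle$ is invertible on the orthogonal complement of $\Hh^\ast(M)$), and analogously on $d$-exact forms $\triangle = dd^\ast$. This makes both $d$ and $d^\ast$ between the indicated pairs of spaces bijective, their compositions being nonzero multiples of the invertible $\triangle$; commutativity then follows from the Hodge-$\ast$ equivariance of the construction together with the vanishings from parts (1)--(3).
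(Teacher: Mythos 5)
Your arguments for parts (1)--(3) are correct and essentially the ones in the paper: the identity $\ast\beta^2 = c_l\,(\beta^2\wedge\varphi)$ from Lemma~\ref{Lambda2-V7} kills $\Om^{2;d}_l(M)$ for $l=7,14$; closedness of $f\varphi$, $f\ast\varphi$ forces $df=0$ for part (2); and your part (3) is the Hodge-dual of the paper's argument (exactness of $\imath_{(\alpha^1)^\#}\ast\varphi$ gives $\Ll_{(\alpha^1)^\#}\ast\varphi=0$, then Killing $\Rightarrow$ parallel by Bochner on a Ricci-flat closed manifold). Your sign computations there check out.

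Part (4), however, is where almost all of the work in this proposition lives, and your treatment has genuine gaps. First, the converse inclusion in (\ref{def-U3-alt}) does not follow from ``Hodge decomposition of the preimage'': the type decomposition $\Om^3=\Om^3_1\oplus\Om^3_7\oplus\Om^3_{27}$ is not compatible with exactness, and $d^\ast$ of the individual components of an exact $3$-form need not separate by type, so parts (1)--(3) do not ``annihilate all contributions except those from $\Om^3_1$.'' The actual argument writes $\alpha^2=\imath_{(\alpha^1)^\#}\varphi$ and uses $d^\ast\alpha^2=0\Rightarrow d\alpha^1\wedge\ast\varphi=0\Rightarrow d\alpha^1\in\Om^{2;d}_{14}(M)=0$. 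Second, and most seriously, your ``key observation'' ($dd^\ast$, $d^\ast d$ restrict to the invertible Laplacian) only yields bijectivity \emph{after} one knows that $d$ and $d^\ast$ actually map each space into its claimed partner. The hard instance is $d^\ast\Om^{3;d}_{27}(M)\subset\Om^{2;d^\ast}_{14}(M)$, i.e.\ that $\pi^2_7(d^\ast\alpha^3)=0$ for exact $\alpha^3\in\Om^3_{27}(M)$; this is not a formal consequence of (1)--(3) or of $\ast$-equivariance, and the paper needs a separate argument (showing $3\pi^2_7(d^\ast\alpha^3)=d^\ast\alpha^3+\ast(d^\ast\alpha^3\wedge\varphi)$ lies in $\Om^{2;d^\ast}_7(M)$, hence equals $\imath_{(df)^\#}\varphi$, and then deducing $df=0$ from $\triangle\alpha^3\wedge\ast\varphi=0$). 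Third, (\ref{def-V3}) is not ``Lemma~\ref{lem:V3} packaged with the decomposition'': the lemma gives only the inclusion $\subset$, while the reverse inclusion requires constructing the correction term explicitly --- given $\alpha^3$ satisfying the right-hand conditions one sets $\beta^2=d\ast(\tilde\alpha^3\wedge\varphi)$ with $\triangle\tilde\alpha^3=\alpha^3$ and verifies $\alpha^3+\tfrac12\ast\alpha^4_{\beta^2}\in\Om^{3;d^\ast}_{27}(M)$ --- and the isomorphism in the fourth diagram additionally needs the computation that $\ast d$ preserves $\Om^{3;d^\ast}_{27}(M)\oplus V_3^{d^\ast}(M)$, which uses the characterization (\ref{def-V3}) itself. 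As written, your part (4) asserts the conclusions but does not prove them.
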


\begin{proof}
Let $\alpha^2 \in \Om^2_l(M)$, where $l = 7, 14$. Then according to Lemma \ref{Lambda2-V7} there are constants $c_l \in \R$ such that $\ast \alpha^2 = c_l (\alpha^2 \wedge \varphi)$. Thus, for $\alpha^2 \in \Om^2_l(M)$ we have
\begin{equation} \label{eq:d*alpha2}
d^\ast \alpha^2 \stackrel{(\ref{d*-formula})} = \ast d \ast \alpha^2 = c_l \ast d (\alpha^2 \wedge \varphi) = c_l \ast (d\alpha^2 \wedge \varphi).
\end{equation}
Therefore, if $\alpha^2 \in \Om^{2;d}_l(M)$, then (\ref{eq:d*alpha2}) implies that $d^\ast \alpha = 0$, whence $\alpha^2$ is both exact and coclosed and hence $\alpha^2 = 0$, showing that $\Om^{2;d}_l(M) = 0$ and therefore, $\Om^{5;d^\ast}_l(M) = 0$ by (\ref{eq:Om-kd-*}). This shows the first statement.

The second statement is immediate as $d(f \varphi) = df \wedge \varphi$ and $d(f \ast \varphi) = df \wedge \ast \varphi$ vanish iff $df = 0$ and hence $f \varphi$ and $f \ast \varphi$ are harmonic. Thus, $\Om^{k;d}_1(M) = 0$ for $k =3,4$, and the other two follow from (\ref{eq:Om-kd-*}).

For the third statement, let $\imath_{(\alpha^1)^\#}\ast \varphi \in \Om^{3;d}_7(M)$. Then $0 = d(\imath_{(\alpha^1)^\#}\ast \varphi) = \Ll_{(\alpha^1)^\#}\ast \varphi$, which implies that $(\alpha^1)^\#$ is a Killing vector field. As $G_2$-manifolds are Ricci-flat, 
Bochner's theorem implies that $(\alpha^1)^\#$ is parallel and hence, so is $\imath_{(\alpha^1)^\#}\ast \varphi$. Thus, $\imath_{(\alpha^1)^\#}\ast \varphi \in \Om^{3;d}_7(M)$ is harmonic, and since it is also exact, it must vanish. That is, $\Om^{3;d}_7(M) = 0$, and by (\ref{eq:Om-kd-*}) $\Om^{4;d^\ast}_7(M) = 0$ as well.

For the last part, we shall first prove (\ref{def-U3-alt}). Namely, for $f \in C^\infty(M)$
\[
\imath_{df^\#} \varphi \stackrel{(\ref{prop-Hodge}),(\ref{eq:ast-contr})} = \ast (df \wedge \ast \varphi) = \ast d(\ast f\varphi) \stackrel{(\ref{d*-formula})} = - d^\ast (f \varphi) \in d^\ast \Om^3_1(M).
\]
It follows that $\imath_{df^\#} \varphi \in d^\ast \Om^3_1(M) \subset \Om^{2;d^\ast}_7(M)$. Conversely, if $\alpha^2 = \imath_{(\alpha^1)^\#} \varphi \in \Om^{2;d^\ast}_7(M)$, then 
\[
0 = d^\ast \alpha^2 \stackrel{(\ref{d*-formula})} = \ast d \ast \imath_{(\alpha^1)^\#} \varphi \stackrel{(\ref{eq:ast-contr})} = \ast d(\alpha^1 \wedge \ast \varphi) = \ast (d\alpha^1 \wedge \ast \varphi),
\]
which implies that $d\alpha^1 \in \Om^{2;d}_{14}(M) = 0$, i.e., $\alpha^1 = \alpha^1_h + df$ for some $\alpha^1_h \in \Hh^1(M)$ and $f \in C^\infty(M)$, i.e., so that $\alpha^2 = \imath_{df^\#} \varphi + \imath_{(\alpha^1_h)^\#} \varphi$. Since we already showed that $\imath_{df^\#} \varphi \in \Om^{2;d^\ast}_7(M)$ and because $\imath_{(\alpha^1_h)^\#} \varphi$ is harmonic, it follows that $\alpha^1_h = 0$, and hence, $\alpha^2 \in \Om^{2;d^\ast}_7(M)$ iff $\alpha^2 = \imath_{df^\#} \varphi = - d^\ast (f \varphi)$ for some $f \in C^\infty(M)$ which shows (\ref{def-U3-alt}), so that $d\Om^{2;d^\ast}_7(M) = dd^\ast (\Om^3_1(M))$ and $d^\ast dd^\ast (\Om^3_1(M)) = \triangle \Om^{2;d^\ast}_7(M) = \Om^{2;d^\ast}_7(M)$.

The second diagram in (\ref{eq:diagrams}) and (\ref{def-U4-alt}) is dealt with in a similar fashion; in fact, by replacing $\alpha^2 = \imath_{(\alpha^1)^\#} \varphi$ by $\alpha^3 = \imath_{(\alpha^1)^\#} \ast \varphi$, we show by literally the same proof that (\ref{def-U4-alt}) holds, whence the second diagram commutes as well.

Now let us consider the third diagram in (\ref{eq:diagrams}). If $\alpha^2 \in \Om^{2;d^\ast}_{14}(M)$, then $d\alpha^2 \wedge \ast \varphi = 0$ and  (\ref{eq:d*alpha2}) implies that $d\alpha^2 \wedge \varphi = 0$, which shows that
\begin{equation} \label{eq:Om3d27subs}
d \Om^{2;d^\ast}_{14}(M) \subset \Om^{3;d}_{27}(M).
\end{equation}
Conversely, suppose that $\alpha^3 \in \Om^{3;d}_{27}(M)$, and let $\alpha^2 := d^\ast \alpha^3$. Since $\alpha^2 \in d^\ast \Om^3(M) \subset \triangle \Om^2(M)$, it follows from (\ref{eq:Laplace-commute}) that $\ast (\alpha^2 \wedge \varphi) \in \triangle \Om^2(M) = (\Hh^2(M))^\perp$. Moreover,
\begin{equation} \label{eq:d*alpha2-1}
d^\ast (\ast (\alpha^2 \wedge \varphi)) \stackrel{(\ref{d*-formula}), (\ref{prop-Hodge})} = \ast (d\alpha^2 \wedge \varphi) = \ast (\triangle \alpha^3 \wedge \varphi) \stackrel{(\ref{eq:Laplace-commute})}= \ast \triangle (\alpha^3 \wedge \varphi) = 0.
\end{equation}
Thus, $\ast (\alpha^2 \wedge \varphi) \in d^\ast \Om^3(M)$ and therefore,
\[
3 \pi^2_7(\alpha^2) \stackrel{\text{Lemma}\; \ref{Lambda2-V7}}= \alpha^2 + \ast (\alpha^2 \wedge \varphi) \in \Om^{2;d^\ast}_7(M),
\]
so that by (\ref{def-U3-alt}) $\pi^2_7(\alpha^2) = \imath_{(df)^\#} \varphi$ for some $f \in C^\infty(M)$. But then,
\[
\alpha^2 \wedge \ast \varphi = \pi^2_7(\alpha^2) \wedge \ast \varphi = \imath_{(df)^\#} \varphi  \wedge \ast \varphi \stackrel{(\ref{eq:form15})} = 3 \ast df,
\]
whence, 
\[
3 d \ast df = d\alpha^2 \wedge \ast \varphi = \triangle \alpha^3 \wedge \ast \varphi = 0
\]
as $\alpha^3 \in \Om^3_{27}(M)$, and hence $d^\ast df = 0$, so that $df = 0$ and hence, $\pi^2_7(\alpha^2) = 0$ which implies that $\alpha^2 \in \Om^{2;d^\ast}_{14}(M)$.

Thus, we have shown that $d^\ast \Om^{3;d}_{27}(M) \subset \Om^{2;d^\ast}_{14}(M)$, which together with (\ref{eq:Om3d27subs}) shows that the maps in the third diagram are isomorphisms.

We now show (\ref{def-V3}). The inclusion $\subset$ is obvious from the definition of $\Om^{3;d^\ast}_{27}(M)$ and Lemma \ref{lem:V3}. For the converse, let $\alpha^3 \in d^\ast \Om^4(M)$ be such that $\alpha^3 \wedge \ast \varphi = 0$ and $d\alpha^3 \wedge \varphi = 0$. Let $\tilde \alpha^3 \in d^\ast \Om^4(M)$ be such that $\alpha^3 = \triangle \tilde \alpha^3$, let $\beta^2 := d \ast (\tilde \alpha^3 \wedge \varphi) \in d\Om^1(M)$, and
\[
\widehat \alpha^3 := \alpha^3 + \frac12 \ast \alpha^4_{\beta^2}
\]
with $\alpha^4_{\beta^2}$ from (\ref{eq:def-V3-beta}). Since $\alpha^4_{\beta^2} \in d\Om^3(M)$ by Lemma \ref{lem:V3}, it follows that $\widehat \alpha^3 \in d^\ast \Om^4(M)$. Moreover,
\[
\widehat \alpha^3 \wedge \ast \varphi = \alpha^3 \wedge \ast \varphi + \frac12 \ast \alpha^4_{\beta^2} \wedge \ast \varphi = 0 + \frac12 \alpha^4_{\beta^2} \wedge \varphi \stackrel{(\ref{eq:V3-prop})}= 0,
\]
and
\begin{align*}
\widehat \alpha^3 \wedge \varphi &= \alpha^3 \wedge \varphi + \frac12 (\ast \alpha^4_{\beta^2}) \wedge \varphi \stackrel{(\ref{eq:V3-prop})}= \alpha^3 \wedge \varphi - d \ast \beta^2\\
& = \alpha^3 \wedge \varphi - dd^\ast (\tilde \alpha^3 \wedge \varphi) = \alpha^3 \wedge \varphi - \triangle (\tilde \alpha^3 \wedge \varphi) + d^\ast d(\tilde \alpha^3 \wedge \varphi)\\
& \stackrel{(\ref{eq:Laplace-commute})}= \alpha^3 \wedge \varphi - (\triangle \tilde \alpha^3) \wedge \varphi + d^\ast d(\tilde \alpha^3 \wedge \varphi) = d^\ast d(\tilde \alpha^3 \wedge \varphi).
\end{align*}
Now
\[
\triangle d(\tilde \alpha^3 \wedge \varphi) \stackrel{(\ref{eq:Laplace-commute})}=  d((\triangle \tilde \alpha^3) \wedge \varphi) = d(\alpha^3 \wedge \varphi) = 0,
\]
so that $d(\tilde \alpha^3 \wedge \varphi) \in \Hh^7(M) \cap d\Om^6(M) = 0$. Thus, $d(\tilde \alpha^3 \wedge \varphi) = 0$, whence $\widehat \alpha^3 \wedge \varphi = 0$.

All of this now implies that $\widehat \alpha^3 \in \Om^{3;d^\ast}_{27}(M)$ and therefore,
\[
\alpha^3 = \widehat \alpha^3 - \frac12 \ast \alpha^4_{\beta^2} \in \Om^{3;d^\ast}_{27}(M) \oplus V_3^{d^\ast}(M),
\]
which shows (\ref{def-V3}). Thus, in order to show that the maps in the last diagram in (\ref{eq:diagrams}) are isomorphisms, we have to show that $\ast d$ preserves $\Om^{3;d^\ast}_{27}(M) \oplus V_3^{d^\ast}(M)$. If $\alpha^3$ is an element of this space, then evidently, $\ast d \alpha^3 = d^\ast \ast \alpha^3 \in d^\ast \Om^4(M)$. Moreover,
\begin{align*}
\ast d\alpha^3 \wedge \ast \varphi &= d\alpha^3 \wedge \varphi = 0\\
(d\ast d\alpha^3) \wedge \varphi &= (\ast d \ast d\alpha^3) \wedge \ast \varphi \stackrel{d^\ast \alpha^3 = 0} = (\triangle \alpha^3) \wedge \ast \varphi \stackrel{(\ref{eq:Laplace-commute})}= \triangle (\alpha^3 \wedge \ast \varphi) = 0.
\end{align*}
Thus, $\ast d$ maps $\Om^{3;d^\ast}_{27}(M) \oplus V_3^{d^\ast}(M)$ to itself, and since the restriction of $(\ast d)^2$ to $\Om^{3;d^\ast}_{27}(M) \oplus V_3^{d^\ast}(M)$ coincides with the Laplacian and hence is an isomorphism, if follows that $\ast d: \Om^{3;d^\ast}_{27}(M) \oplus V_3^{d^\ast}(M) \to \Om^{3;d^\ast}_{27}(M) \oplus V_3^{d^\ast}(M)$ is an isomorphism as well.
\end{proof}

\begin{proposition} \label{prop:inf-dim}
For any $G_2$-manifold, all spaces in the diagrams (\ref{eq:diagrams}) are infinite dimensional.
\end{proposition}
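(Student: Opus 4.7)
My plan is to use the isomorphisms established in Proposition~\ref{prop:diagrams}(4): within each of the four diagrams in (\ref{eq:diagrams}) all displayed spaces are mutually isomorphic via the indicated $d$, $d^\ast$ and $\ast$ maps, so it suffices to exhibit a single infinite-dimensional representative per diagram. The first three cases are settled by fairly direct arguments; only the fourth requires a genuine observation.

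For the upper two diagrams I would invoke the explicit descriptions (\ref{def-U3-alt}) and (\ref{def-U4-alt}), which identify $\Om^{2;d^\ast}_7(M) = d^\ast \Om^3_1(M)$ and $\Om^{3;d^\ast}_7(M) = d^\ast \Om^4_1(M)$ with the images of the linear maps $f \mapsto \imath_{(df)^\#}\varphi$ and $f \mapsto \imath_{(df)^\#}\ast\varphi$ on $C^\infty(M)$. Because both $\varphi$ and $\ast\varphi$ are multi-symplectic on a $G_2$-manifold, the kernel of either map consists of the $f$ with $df = 0$, which is just the constants on the connected $M$, so both images are infinite dimensional.

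For the third diagram I would argue via the parallel subbundle $\Lambda^2_{14} T^\ast M$: its section space $\Om^2_{14}(M)$ is infinite dimensional, and the Weitzenb\"ock formula ensures that the Laplacian preserves it, yielding the restricted Hodge decomposition $\Om^2_{14}(M) = \Hh^2_{14}(M) \oplus \Om^{2;d}_{14}(M) \oplus \Om^{2;d^\ast}_{14}(M)$. Since $\Om^{2;d}_{14}(M) = 0$ by Proposition~\ref{prop:diagrams}(1) and $\Hh^2_{14}(M) \subset \Hh^2(M)$ is finite dimensional, $\Om^{2;d^\ast}_{14}(M)$ must be infinite dimensional.

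For the fourth diagram it suffices to show that the summand $V_3^{d^\ast}(M)$ alone is infinite dimensional. I would consider the tautological surjection $T: d\Om^1(M) \to V_3^{d^\ast}(M)$, $\beta^2 \mapsto \ast \alpha^4_{\beta^2}$, and establish its injectivity as follows: if $T(\beta^2) = 0$, then the second identity in (\ref{eq:V3-prop}) gives $-2 d\ast\beta^2 = \ast\alpha^4_{\beta^2}\wedge\varphi = 0$; since $d\ast$ and $\ast d^\ast$ agree up to sign on $\Om^2(M)$, this means $d^\ast\beta^2 = 0$. But then $\beta^2 \in d\Om^1(M)$ is simultaneously exact and coclosed on the closed manifold $M$, which by the Hodge decomposition forces $\beta^2 = 0$. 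Hence $V_3^{d^\ast}(M) \cong d\Om^1(M)$, which is infinite dimensional on any closed manifold of dimension at least two. The principal obstacle here is that, in contrast to the third diagram, $V_3^{d^\ast}(M)$ is not the section space of a parallel subbundle, so the bundle-theoretic argument fails; the identities in (\ref{eq:V3-prop}) from Lemma~\ref{lem:V3} are precisely what make the linear parametrization by $d\Om^1(M)$ available.
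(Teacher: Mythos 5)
Your arguments for the first two diagrams and for $V_3^{d^\ast}(M)$ are essentially the paper's own (the injectivity of $\beta^2 \mapsto \ast\alpha^4_{\beta^2}$ via (\ref{eq:V3-prop}) is exactly the point of that lemma), but the treatment of the third diagram rests on a false decomposition, and the fourth diagram is not fully covered.

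The claimed ``restricted Hodge decomposition'' $\Om^2_{14}(M) = \Hh^2_{14}(M) \oplus \Om^{2;d}_{14}(M) \oplus \Om^{2;d^\ast}_{14}(M)$ does not follow from the Weitzenb\"ock formula and is in fact wrong. That the Laplacian preserves $\Om^2_{14}(M)$ only gives $\Om^2_{14}(M) = \Hh^2_{14}(M) \oplus \triangle\Om^2_{14}(M)$; the individual pieces $dd^\ast\sigma$ and $d^\ast d\sigma$ of $\triangle\sigma$ need not again lie in $\Om^2_{14}(M)$, because $d$ and $d^\ast$ do not preserve the type decomposition. Worse, combined with $\Om^{2;d}_{14}(M)=0$ from Proposition~\ref{prop:diagrams}(1), your decomposition would assert that every form in $\Om^2_{14}(M)$ is coclosed, which fails already for $f\eta$ with $\eta$ a nonzero type-$14$ harmonic form and $f$ non-constant (then $d^\ast(f\eta) = -\imath_{(df)^\#}\eta \neq 0$ in general). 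The correct decomposition, which the paper proves, is $\Om^2_{14}(M) = \Hh^2_{14}(M) \oplus \Om^{2;d^\ast}_{14}(M) \oplus \pi^2_{14}(d\Om^1(M))$ --- note the third summand is the \emph{projection} of $d\Om^1(M)$, not its intersection with $\Om^2_{14}(M)$ --- and establishing it requires showing that $\phi_1(\alpha^1) := \pi^2_{14}(d\alpha^1) + d^\ast\alpha^1$ is overdetermined elliptic and computing $\ker\phi_1^\ast$. One then concludes via Lemma~\ref{lem:codim-inf}: since $\mathrm{rank}(\Lambda^1 T^\ast M) = 7 < 14 = \mathrm{rank}(\Lambda^2_{14}T^\ast M)$, the summand $\pi^2_{14}(d\Om^1(M))$ has infinite codimension, so $\Hh^2_{14}(M) \oplus \Om^{2;d^\ast}_{14}(M)$, and hence $\Om^{2;d^\ast}_{14}(M)$, is infinite dimensional. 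Some argument of this kind is genuinely needed; there is no shortcut through a type-preserving Hodge decomposition.

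For the fourth diagram, showing that $V_3^{d^\ast}(M)$ is infinite dimensional only shows that the direct sum $\Om^{3;d^\ast}_{27}(M) \oplus V_3^{d^\ast}(M)$ is; it says nothing about the summand $\Om^{3;d^\ast}_{27}(M)$ itself, which is one of the spaces the proposition is about (and its infinite-dimensionality is what is invoked after Theorem~\ref{thm:cohomG2}). The paper handles it by the same elliptic-operator device: the decomposition $\Om^3_{27}(M) = \Hh^3_{27}(M) \oplus \Om^{3;d^\ast}_{27}(M) \oplus \pi^3_{27}(d\Om^2(M))$ obtained from the overdetermined elliptic operator $\phi_2(\alpha^2) := \pi^3_{27}(d\alpha^2) + d^\ast\alpha^2$, together with the rank count $21 < 27$ in Lemma~\ref{lem:codim-inf}. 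You would need to add both of these elliptic arguments (including the symbol computations showing $\ker\sigma_\xi^1 = \ker\sigma_\xi^2 = 0$) to complete the proof.
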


\begin{proof}
By (\ref{def-U3-alt}), (\ref{def-U4-alt}) and (\ref{eq:V3-prop}), there are isomorphisms
\begin{align*}
dC^\infty(M) \xrightarrow{\ \cong\ } \Om^{2;d^\ast}_7(M) & \qquad df \longmapsto \imath_{(df)^\#} \varphi\\
dC^\infty(M) \xrightarrow{\ \cong\ } \Om^{3;d^\ast}_7(M) & \qquad df \longmapsto \imath_{(df)^\#} \ast \varphi\\
d\Om^1(M) \xrightarrow{\ \cong\ } V_3^{d^\ast}(M) & \qquad \beta^2 \longmapsto \ast \alpha^4_{\beta^2},
\end{align*}
showing that these spaces are infinite dimensional. In order to show that $\Om^{2;d^\ast}_{14}(M)$ and $\Om^{3;d^\ast}_{27}(M)$ are infinite dimensional, we assert that there are direct sum decompositions
\begin{align}
\label{eq:Om14-2} \Om^2_{14}(M) = \Hh^2_{14}(M) \oplus \Om^{2;d^\ast}_{14}(M) \oplus \pi^2_{14}(d\Om^1(M)),
\\
\label{eq:Om27-3} \Om^3_{27}(M) = \Hh^3_{27}(M) \oplus \Om^{3;d^\ast}_{27}(M) \oplus \pi^3_{27}(d\Om^2(M)).
\end{align}
If these decompositions hold, then we apply Lemma \ref{lem:codim-inf} to the differential operators $\Om^1(M) \ni \alpha^1 \mapsto \pi^2_{14}(d \alpha^1) \in \Om^2_{14}(M)$ and $\Om^2(M) \ni \alpha^2 \mapsto \pi^3_{27}(d\alpha^2) \in \Om^3_{27}(M)$, respectively, to conclude that $\Om^{2;d^\ast}_{14}(M)$ and $\Om^{3;d^\ast}_{27}(M)$ are infinite dimensional, as the space of harmonic forms is finite dimensional.

To see the decompositions (\ref{eq:Om14-2}) and (\ref{eq:Om27-3}), we consider the differential operators $\phi_1: \Om^1(M) \to \Om^2_{14}(M) \oplus C^\infty(M)$ and $\phi_2: \Om^2(M) \to \Om^3_{27}(M) \oplus \Om^1(M)$ given by
\[
\phi_1(\alpha^1) := \pi^2_{14}(d\alpha^1) + d^\ast \alpha^1, \qquad \phi_2(\alpha^2) := \pi^3_{27}(d\alpha^2) + d^\ast \alpha^2.
\]
Their formal adjoints are given by $\phi_1^\ast: \Om^2_{14}(M) \oplus C^\infty(M) \to \Om^1(M)$ and $\phi_2^\ast: \Om^3_{27}(M) \oplus \Om^1(M) \to \Om^2(M)$ as
\[
\phi_1^\ast(\beta^2_{14}, f) = d^\ast \beta^2_{14} + df \quad \mbox{and} \quad \phi_2^\ast(\beta^3_{27}, \beta^1) = d^\ast \beta^3_{27} + d\beta^1.
\]
By the Hodge decomposition $(\beta^2_{14}, f) \in \ker \phi_1^\ast$ iff $d^\ast \beta^2_{14} = 0$ and $df = 0$. In this case, since there is an orthogonal decompositions $\Hh^2(M) = \Hh^2_7(M) \oplus \Hh^2_{14}(M)$, it follows that the harmonic part of $\beta^2_{14}$ must be an element of $\Hh^2_{14}(M)$, whence the coexact part must be an element of $\Om^2_{14}(M)$ as well. Thus, it follows that
\[
\ker \phi_1^\ast = \big(\Hh^2_{14}(M) \oplus \Om^{2;d^\ast}_{14}(M)\big) \oplus \Hh^0(M),
\]
and analogously,
\[
\ker \phi_2^\ast = \big(\Hh^3_{27}(M) \oplus \Om^{3;d^\ast}_{27}(M)\big) \oplus \ker d|_{\Om^1(M)}.
\]
Thus, if we can show that $\phi_1, \phi_2$ are overdetermined elliptic, then $\Om^2_{14}(M) \oplus C^\infty(M) = \phi_1(\Om^1(M)) \oplus \ker \phi_1^\ast$ and $\Om^3_{27}(M) \oplus \Om^1(M) = \phi_2(\Om^2(M)) \oplus \ker \phi_2^\ast$, and from this, (\ref{eq:Om14-2}) and (\ref{eq:Om27-3}) follows. 

In order to show that $\phi_1$ and $\phi_2$ are overdetermined elliptic, we calculate their symbols for $0 \neq \xi \in T_p^\ast M$ as
\[
\sigma_\xi^1(\alpha^1) = \pi^2_{14}(\xi \wedge \alpha^1) - \imath_{\xi^\#} \alpha^1, \qquad \sigma_\xi^2(\alpha^2) = \pi^3_{27}(\xi \wedge \alpha^2) - \imath_{\xi^\#} \alpha^2.
\]

Let $\alpha^1 \in \ker \sigma_\xi^1$. Then $\xi \wedge \alpha^1 = \imath_v \varphi \in \Lambda^2_7 T_p^\ast M$ for some $v \in T_pM$. But then, (\ref{eq:form-1def}) implies that
\[
6 \|v\|^2\; \vol = (\xi \wedge \alpha^1)^2 \wedge \varphi = 0 \Rightarrow v = 0 \Rightarrow \xi \wedge \alpha^1 = 0,
\]
so that $\alpha^1 = c \xi$ for some $c \in \R$. But then, $0 = \imath_{\xi^\#} \alpha^1 = c \|\xi\|^2$ and $\xi \neq 0$ implies that $c = 0$ and hence, $\alpha^1 = 0$, so that $\ker \sigma_\xi^1 = 0$.

Similarly, $\alpha^2 \in \ker \sigma^2_\xi$ iff $\imath_{\xi^\#} \alpha^2 = 0$ and $\xi \wedge \alpha^2 \in \Lambda^3_7 T_p^\ast M \oplus \Lambda^3_1 T_p^\ast M$. The second equation implies that there is a $c \in \R$ and $v \in T_pM$ such that
\[
\xi \wedge \alpha^2 = c \varphi + \imath_v \ast \varphi \Rightarrow 0 = \xi \wedge (c \varphi + \imath_v \ast \varphi) \wedge \varphi \stackrel{(\ref{eq:form16})} = 4 \xi \wedge \ast v^\flat = 4g(\xi^\#,v)\; \vol,
\]
so that $\xi^\#, v$ are orthogonal. As $G_2$ acts transitively on orthonormal pairs, we may assume w.l.o.g. that $\xi = c_1 e_1$, $v = c_2 e_2$, whence
\begin{align*}
c_1 e^1 \wedge \alpha^2 & = c \varphi + c_2 \imath_{e_2} \ast \varphi\\
\Rightarrow 0 & = e^1 \wedge (c \varphi + c_2 \imath_{e_2} \ast \varphi)\\
& = c (e^{1246} - e^{1257} - e^{1347} - e^{1356}) + c_2 (e^{1367} + e^{1345}),
\end{align*}
and from this, $c = c_2 = 0$ and hence, $\xi \wedge \alpha^2 = 0$ follows, and this together with $\imath_{\xi^\#} \alpha^2 = 0$ implies that $\alpha^2 = 0$ and hence the injectivity of $\sigma^2_\xi$.
\end{proof}

We are now ready to calculate the Fr\"olicher-Nijenhuis cohomology algebra of a $G_2$-manifold.

\begin{theorem} \label{thm:cohomG2}
Let $(M, \varphi)$ be a closed $G_2$-manifold, so that $\ast \varphi \in \Om^4(M)$ is parallel. Then the differential $\Ll_{\ast \varphi}$ is regular, and the cohomology algebra $H^\ast_{\ast \varphi}(\Om^\ast(M))$ is given as follows:
\[
\begin{array}{llllllllll}
H_{\ast \varphi}^2(M) \cong & \Hh^2(M) & \oplus & \Om^{2;d^\ast}_7(M) & \oplus & \Om^{2;d^\ast}_{14}(M)\\[1mm]
H_{\ast \varphi}^3(M) \cong & \Hh^3(M) & \oplus & dd^\ast \Om^3_1(M) & \oplus & \Om^{3;d}_{27}(M) & \oplus & \Om^{3;d^\ast}_{27}(M) & \oplus & V^3_{d^\ast}(M)\\[1mm]
H_{\ast \varphi}^4(M) \cong & \Hh^4(M) & \oplus & d^\ast d \Om^4_1(M) & \oplus & \Om^{4;d^\ast}_{27}(M) & \oplus & \Om^{4;d}_{27}(M) & \oplus & V^4_d(M)\\[1mm]
H_{\ast \varphi}^5(M) \cong & \Hh^5(M) & \oplus & \Om^{5;d}_7(M) & \oplus & \Om^{5;d}_{14}(M)
\end{array}
\]
with the definitions in (\ref{eq:DiffForm-G2}) and (\ref{eq:def-V3}), and $H_{\ast \varphi}^k(M) \cong  \Hh^k(M)$ for $k = 0,1,6,7$. Moreover, the summands in this decomposition are $L^2$-orthogonal.
\end{theorem}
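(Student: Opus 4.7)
The plan is to apply Theorem~\ref{thm:cohom-harmonic}: first establish that $\Ll_{\ast\varphi}$ is regular, so that $H^\ast_{\ast\varphi}(M)$ is identified with $\Hh^\ast_{\ast\varphi}(M)$ and admits the orthogonal decomposition $\Hh^l_{\ast\varphi}(M) = \Hh^l(M) \oplus \Hh^l_{\ast\varphi}(M)_d \oplus \Hh^l_{\ast\varphi}(M)_{d^\ast}$; then match $\Hh^l_{\ast\varphi}(M)_d$ and $\Hh^l_{\ast\varphi}(M)_{d^\ast}$ with the explicit pieces of Proposition~\ref{prop:diagrams} on each $G_2$-isotypic summand.

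Regularity is argued degree by degree. It is trivial in degrees $l\le 2$, since both $\Im \Ll_{\ast\varphi;l}$ and the codomain of $\Ll_{\ast\varphi;l}^\ast$ are zero; Lemma~\ref{lem:elliptic multi-symplectic} gives regularity at $l=3,7$ by over- and underdetermined ellipticity. For the middle range $l \in \{4,5,6\}$ I would first combine Proposition~\ref{prop:formal-adjointLK} with the closed range theorem to see that $l$-regularity is equivalent to $(10-l)$-regularity, so that $l=4$ pairs with $l=6$ while $l=5$ is self-dual; by Theorem~\ref{thm:cohom-harmonic}(4), once $l=4$ is settled the cases $l=5,6$ follow. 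For $l=4$ one verifies the splitting $\Om^4(M) = \ker \Ll_{\ast\varphi;4}^\ast \oplus \Im \Ll_{\ast\varphi;4}$ by identifying $\Im \Ll_{\ast\varphi;4}$ with the closed subspace $d^\ast d\Om^4_1(M) \oplus \Om^{4;d^\ast}_{27}(M) \oplus \Om^{4;d}_{27}(M) \oplus V_4^d(M)$ provided by the diagrams of Proposition~\ref{prop:diagrams}, whose orthogonal complement inside $\Om^4(M)$ is then precisely $\ker \Ll_{\ast\varphi;4}^\ast$.

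Once regularity is in hand, the identification of each summand of $\Hh^l_{\ast\varphi}(M)$ reduces to a direct check on the generators listed in Proposition~\ref{prop:diagrams}. By the isomorphism $d:\Hh^{l-1}_{\ast\varphi}(M)_{d^\ast}\to\Hh^l_{\ast\varphi}(M)_d$ from (\ref{eq:Hl-split}) it is enough to compute the coexact part, and for each generator (for instance $\imath_{(df)^\#}\varphi \in \Om^{2;d^\ast}_7(M)$ or $\ast\alpha^4_{\beta^2} \in V_3^{d^\ast}(M)$) one verifies $\Ll_{\ast\varphi}\alpha = 0$ and $\Ll_{\ast\varphi}(\ast\alpha) = 0$ using the formula $\Ll_{\ast\varphi}\alpha = (d^\ast\alpha)\wedge\ast\varphi - d^\ast(\alpha\wedge\ast\varphi)$ together with Lemma~\ref{lem:V3} and the appendix identities. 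The boundary degrees $k\in\{0,1,6,7\}$ follow from Propositions~\ref{prop:chomom-LK_0,n} and \ref{prop:chomom-LK 1-form} via multi-symplecticity of $\ast\varphi$ (automatic on irreducible $G_2$-manifolds) combined with Bochner's theorem, which forces coclosed Killing $1$-forms on the Ricci-flat manifold $M$ to be parallel. The $L^2$-orthogonality of the summands is inherited from that of the Hodge and $G_2$-type decompositions of $\Om^l(M)$.

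The main obstacle I expect is the direct verification of $4$-regularity: no ellipticity argument is available, and one must explicitly match $\Im \Ll_{\ast\varphi;4}$ with the pieces in Proposition~\ref{prop:diagrams}. In particular, controlling the ``new'' summand $V_4^d(M)$, which is defined as an orthogonal complement inside $d\Om^3(M)$ rather than through $G_2$-types, will require careful use of the identities of Lemma~\ref{lem:V3}.
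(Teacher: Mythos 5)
Your overall architecture (regularity $\Rightarrow$ $H^l_{\ast\varphi}\cong\Hh^l_{\ast\varphi}$, then identify the pieces via Proposition \ref{prop:diagrams}, with the boundary degrees handled by Propositions \ref{prop:chomom-LK_0,n} and \ref{prop:chomom-LK 1-form} and Bochner) matches the paper. But the step you yourself flag as the main obstacle --- $4$-regularity --- is where the proposal breaks down, and the fix you propose is wrong. You claim that $\Im \Ll_{\ast \varphi;4}$ can be identified with $d^\ast d\Om^4_1(M) \oplus \Om^{4;d^\ast}_{27}(M) \oplus \Om^{4;d}_{27}(M) \oplus V_4^d(M)$. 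Those spaces are the \emph{non-harmonic part of} $\Hh^4_{\ast\varphi}(M)$, i.e.\ they sit inside $\ker\Ll_{\ast\varphi}\cap\ker\Ll^\ast_{\ast\varphi}$ and therefore meet $\Im\Ll_{\ast\varphi;4}$ only in $0$ (cf.\ the remark before (\ref{eq:inject-harmonic})); identifying them with the image is exactly backwards, and it is also circular, since that identification is essentially the content of the theorem. It is moreover false on dimensional grounds: $\Im\Ll_{\ast\varphi;4}=\Ll_{\ast\varphi}(\Om^1(M))$ is parametrized by a rank-$7$ bundle, so by the Taylor-polynomial count of Lemma \ref{lem:codim-inf} it cannot contain the rank-$27$-worth of freedom in $\Om^{4;d}_{27}(M)$. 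Finally, even granting a description of the image, concluding that its orthogonal complement is $\ker\Ll^\ast_{\ast\varphi;4}$ in the $C^\infty$ category requires closed range plus regularity of the projections, which is precisely what ellipticity is needed to supply; the same caveat applies to your appeal to the closed range theorem to pair $l$ with $10-l$ in the Fr\'echet setting.

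The missing idea is that an ellipticity argument \emph{is} available at $l=4$ and $l=6$, contrary to your assertion. The paper computes the symbol $\sigma_\xi(\Ll_{\ast\varphi;l})(\alpha^{l-3})=(\imath_{\xi^\#}\ast\varphi)\wedge\alpha^{l-3}$ from (\ref{eq:symbol of L_K}), normalizes $\xi=e^1$ using transitivity of $G_2$ on the unit sphere, and then uses the fact that the kernel (resp.\ image) of the symbol is invariant under the stabilizer $SU(3)\subset G_2$ of $e_1$: checking two explicit test elements shows the kernel of $\sigma_{e^1}(\Ll_{\ast\varphi;4})$ on $\Lambda^1$ contains neither $e^1$ nor any vector of $(e^1)^\perp$, hence vanishes, and dually the image of $\sigma_{e^1}(\Ll_{\ast\varphi;6})$ is all of $\Lambda^6$. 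This gives overdetermined ellipticity at $l=4$ and underdetermined ellipticity at $l=6$, whence $4$- and $6$-regularity by \cite[p.~464]{Besse1987}, and then $5$-regularity by Theorem \ref{thm:cohom-harmonic}(4). You should replace your $l=4$ argument by this symbol computation; the remainder of your outline (the generator-by-generator verification of $\Ll_{\ast\varphi}\alpha=\Ll_{\ast\varphi}(\ast\alpha)=0$, the use of (\ref{eq:Hl-split}), and the treatment of $k=0,1,6,7$) is consistent with the paper, although the degree-$2$ and degree-$3$ identifications also require the converse inclusions (every $\Ll_{\ast\varphi}$-harmonic form lies in the listed spaces), which need the arguments with $\triangle\Om^\ast(M)=(\Hh^\ast(M))^\perp$ and (\ref{def-V3}) spelled out rather than just a check on generators.
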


Observe that by Corollary \ref{prop:inf-dim}, all summands (apart from $\Hh^k(M)$) in the above decomposition are infinite dimensional.

\begin{proof}
We begin by showing the regularity of $\Ll_{\ast \varphi}$. For $l < 3$ and $l > 7$, this is obvious as then $\Ll_{\ast \varphi; l} = 0$. By Lemma \ref{lem:elliptic multi-symplectic}, $\Ll_{\ast \varphi,l}$ is overdetermined elliptic for $l = 3$ and underdetermined elliptic for $l=7$, whence $\Ll_{\ast \varphi}$ is also $3$-and $7$-regular. 

We assert that for $0 \neq \xi \in T_pM$ the symbol $\sigma_\xi(\Ll_{\ast \varphi,l}): \Lambda^{l-3}T_p^\ast M \to \Lambda^l T_p^\ast M$ is injective for $l = 4$ and surjective for $l=6$. Namely, by (\ref{eq:symbol of L_K}) we have
\[
\sigma_\xi(\Ll_{\ast \varphi,l})(\alpha^{l-3}) = (\imath_{\xi^\#} \ast \varphi) \wedge \alpha^{l-3}.
\]
Rescaling $\xi$ to a unit vector and using that $G_2$ acts transitively on the unit sphere, we may assume w.l.o.g. that $\xi = e^1$ for an orthonormal basis $(e_i)$ of $T_pM$ for which (\ref{varphi*}) holds. Now
\begin{align*}
\sigma_{e^1}(\Ll_{\ast \varphi,4}) (e^1) &= (\imath_{e^1} \ast \varphi) \wedge e^1 
= -(e^{1357} - e^{1346} - e^{1256} - e^{1247}) \neq 0\\
\sigma_{e^1}(\Ll_{\ast \varphi,4}) (e^2) &=  (\imath_{e^1} \ast \varphi) \wedge e^2 
= -(e^{2357}-e^{2346})\neq 0\\
\sigma_{e^1}(\Ll_{\ast \varphi,6}) (\varphi) &=  (\imath_{e^1} \ast \varphi) \wedge \varphi \stackrel{(\ref{eq:form16})}= 4 \ast e^1\\
\sigma_{e^1}(\Ll_{\ast \varphi,6}) (e^{146}) &=  (\imath_{e^1} \ast \varphi) \wedge e^{146} = e^{134567} = - \ast e^2
\end{align*}
The stabilizer of $e_1$ in $G_2$ is isomorphic to $SU(3) \subset G_2$, acting trivially on $\R e_1$ and via the standard $6$-dimensional representation on $(e_1)^\perp$. The image and the kernel of $\sigma_{e^1}(\Ll_{\ast \varphi,l})$ are invariant under this stabilizer. By the above, $\ker \sigma_{e^1}(\Ll_{\ast \varphi,4})$ contains neither $e^1$ nor $e^2$, so by the $SU(3)$-invariance of the kernel we must have $\ker \sigma_{e^1}(\Ll_{\ast \varphi,4}) = 0$. Likewise, the image of $\sigma_{e^1}(\Ll_{\ast \varphi,4})$ contains $\ast e^1$ and $\ast e^2$, so by the $SU(3)$-invariance it is all of $\ast T_p^\ast M = \Lambda^6 T_p^\ast M$, showing the above assertion.

Thus, $\Ll_{\ast \varphi,l}$ is overdetermined elliptic for $l = 4$ and underdetermined elliptic for $l=6$, whence $\Ll_{\ast \varphi}$ is $4$-regular and $6$-regular and hence, it is also $5$-regular by Theorem \ref{thm:cohom-harmonic}(4). Therefore, the regularity of $\Ll_{\ast \varphi,l}$ is established, whence by Theorem \ref{thm:cohom-harmonic}(1), $H^l_{\ast \varphi}(M) \cong \Hh^l_{\ast \varphi}(M)$.

For $l = 0,7$, $\Hh^l_{\ast \varphi}(M) \cong \Hh^l(M)$ by Proposition \ref{prop:chomom-LK_0,n}.

For $l = 1$, $H^1_{\ast \varphi}(M) = \ker \Ll_{\ast \varphi}|_{\Om^1(M)}$. Thus, by Proposition \ref{prop:chomom-LK 1-form}, $\alpha \in H^1_{\ast \varphi}(M)$ implies that $\Ll_{\alpha^\#}(\varphi) = 0$, which in turn implies that $\alpha^\#$ is a Killing vector field. Since a $G_2$-manifold is Ricci flat, it follows by Bochner's theorem that $\alpha^\#$ is parallel, whence so is $\alpha$. In particular, $\alpha$ is harmonic, showing that $H^1_{\ast \varphi}(M) \cong \Hh^1(M)$. For $l = 6$, we have $H^6_{\ast \varphi}(M) \cong \ast H^1_{\ast \varphi}(M) \cong \ast \Hh^1(M) \cong \Hh^6(M)$. This shows that $\Hh^l_{\ast \varphi}(M) \cong \Hh^l(M)$ for $l = 1, 6$.

Next, for $l = 2$, we have $\Hh^2_{\ast \varphi}(M)_d = 0$ by (\ref{eq:Hl-split1}). Thus, we need to determine
\[
\Hh^2_{\ast \varphi}(M)_{d^\ast} = \{ \alpha^2 \in d^\ast \Om^3(M) \mid d^\ast (\alpha^2 \wedge \ast \varphi) = 0\}.
\]
Observe that $\Om^{2;d^\ast}_7(M) \oplus \Om^{2;d^\ast}_{14}(M) \subset \Hh^2_{\ast \varphi}(M)_{d^\ast}$. Namely, for $\Om^{2;d^\ast}_{14}(M)$ this is obvious as $\alpha^2 \wedge \ast \varphi = 0$ for $\alpha^2 \in \Om^2_{14}(M)$. Moreover, by (\ref{def-U3-alt}), we have for $\Om^{2;d^\ast}_7(M) \ni \alpha^2 = \imath_{(df)^\#} \varphi$:
\[
d^\ast (\alpha^2 \wedge \ast \varphi) = d^\ast(\imath_{(df)^\#} \varphi \wedge \ast \varphi) \stackrel{(\ref{eq:form15})} = 3 d^\ast \ast df = 0.
\]

Conversely, let $\alpha^2 \in \Hh^2_{\ast \varphi}(M)_{d^\ast}$. Then there is an $f \in C^\infty(M)$ such that
\[
d(\alpha^2 \wedge \ast \varphi) = \triangle f \vol.
\]
Then by the above, $\imath_{(df)^\#} \varphi \in \Hh^2_{\ast \varphi}(M)_{d^\ast}$, so that
\[
\beta^2 := \alpha^2 + \dfrac13 \imath_{(df)^\#} \varphi \in \Hh^2_{\ast \varphi}(M)_{d^\ast},
\]
and hence,
\[
0 = \Ll_{\ast \varphi}(\beta^2) = d^\ast(\beta^2 \wedge \ast \varphi).
\]
On the other hand,
\begin{align*}
d(\beta^2 \wedge \ast \varphi) &= d(\alpha^2 \wedge \ast \varphi) + \dfrac13 d(\imath_{(df)^\#} \varphi \wedge \ast \varphi)\\
& \stackrel{(\ref{eq:form15})} = \triangle f \vol + d \ast df \stackrel{(\ref{prop-Hodge}), (\ref{d*-formula})} = \triangle f \vol - \triangle f \vol = 0.
\end{align*}

Thus, $\beta^2 \wedge \ast \varphi \in \Hh^6(M)$. On the other hand, $\beta^2 \in d^\ast \Om^3(M) \subset \triangle \Om^2(M)$, whence by (\ref{eq:Laplace-commute}), it follows that $\beta^2 \wedge \ast \varphi \in \triangle \Om^6(M) = (\Hh^6(M))^\perp$. Thus, $\beta^2 \wedge \ast \varphi = 0$, so that $\beta^2 \in \Om^{2;d^\ast}_{14}(M)$.

Therefore, $\alpha^2 = - \dfrac13 \imath_{(df)^\#} \varphi + \beta^2 \in \Om^{2;d^\ast}_7(M) \oplus \Om^{2;d^\ast}_{14}(M)$, showing that $\Hh^2_{\ast \varphi}(M)_{d^\ast}$ is of the asserted form.

For $l = 3$, it follows from (\ref{eq:Hl-split1}) that $\Hh^3_{\ast \varphi}(M)_d = d \Hh^2_{\ast \varphi}(M)_{d^\ast}$ which by Proposition \ref{prop:diagrams} equals $dd^\ast \Om^3_1(M) \oplus \Om^{3;d}_{27}(M)$, whence we need to calculate $\Hh^3_{\ast \varphi}(M)_{d^\ast}$. For this, let $\alpha^3 = d^\ast \alpha^4$ with $\alpha^4 \in d\Om^3(M)$. Then
\begin{align} \label{eq:dalpha4}
\alpha^3 \wedge \ast \varphi &= d^\ast \alpha^4 \wedge \ast \varphi = (\ast d \ast \alpha^4) \wedge \ast \varphi = d\ast \alpha^4 \wedge \varphi = d(\ast \alpha^4 \wedge \varphi),
\end{align}
whence
\[
\Ll_{\ast \varphi}(\alpha^3) = -d^\ast(\alpha^3 \wedge \ast \varphi) \stackrel{(\ref{eq:dalpha4})} = -d^\ast d(\ast \alpha^4 \wedge \varphi),
\]
so that $\Ll_{\ast \varphi}(\alpha^3) = 0$ iff $0 = d(\ast \alpha^4 \wedge \varphi) \stackrel{(\ref{eq:dalpha4})} = \alpha^3 \wedge \ast \varphi$. Moreover,
\begin{align*}
\Ll_{\ast \varphi}(\ast \alpha^3) = (d^\ast \ast \alpha^3) \wedge \ast \varphi = (\ast d \alpha^3) \wedge \ast \varphi = d \alpha^3 \wedge \varphi = d(\alpha^3 \wedge \varphi).
\end{align*}
Thus, $\alpha^3 \in \Hh^3_{\ast \varphi}(M)_{d^\ast}$ iff $\Ll_{\ast \varphi}(\alpha^3) = \Ll_{\ast \varphi}(\ast \alpha^3) = 0$ iff $\alpha^3 \wedge \ast \varphi = d(\alpha^3 \wedge \varphi) = 0$, and by (\ref{def-V3}) this is the case iff $\alpha^3 \in \Om^{3;d^\ast}_{27}(M) \oplus V_3^{d^\ast}(M)$.

Again, since $\ast: \Hh^l_{\ast \varphi}(M) \to \Hh^{7-l}_{\ast \varphi}(M)$ is an isomorphism, the assertions for $l = 4, 5$ follow as well using Proposition \ref{prop:diagrams}.

The $L^2$-orthogonality of the summands is straightforward by Lemma \ref{lem:V3} 
and Proposition \ref{prop:diagrams}.
\end{proof}

%%%%%%%%%%%%%%%%%%%%%%%%%%%%%%%%%%%%%%%%%%%%%%%%%%%%%%%%%
\section{The Fr\"olicher-Nijenhuis cohomology of $\Sp$-manifolds}\label{sec:spin7}
%%%%%%%%%%%%%%%%%%%%%%%%%%%%%%%%%%%%%%%%%%%%%%%%%%%%%%%%%

In this section, we repeat our discussion for the parallel $4$-form on a $\Sp$-manifold.

Let $W$ be an $8$-dimensional oriented real vector space. A {\em $\Sp$-structure on $W$ }is a form $\Phi \in \Lambda^4 W^\ast$ for which there is a positively oriented basis $(e_\mu)_{\mu=0}^7$ of $W$ such that
\begin{eqnarray} \label{Phi4}
\Phi & := & e^{0123} + e^{0145} + e^{0167} + e^{0246} - e^{0257} - e^{0347} - e^{0356}\\
\nonumber & & + e^{4567} + e^{2367} + e^{2345} + e^{1357} - e^{1346} - e^{1256} - e^{1247}.
\end{eqnarray}
Throughout this section, we shall use Greek indices $\mu, \nu, \ldots$ to run over $0, \ldots, 7$, whereas Latin indices $i,j, \ldots$ range over $1, \ldots, 7$.

If we define the forms $\varphi$ and $\ast_7 \varphi$ on $V := \rmspan(e_i)_{i=1}^7 \subset W$ as in (\ref{varphi}) and (\ref{varphi*}), then
\[
\Phi = e^0 \wedge \varphi + \ast_7 \varphi.
\]

The subgroup of $Gl (W)$ preserving $\Phi$ is isomorphic to $\Sp$, which acts irreducibly on $W$. Since $\Sp$ is compact, there is a unique $\Sp$-invariant inner product $g = g_\Phi$ on $W$ for which $\|\Phi\|^2_{g_\Phi} = 14$, and any positively oriented basis $(e_\mu)$ of $W$ for which (\ref{Phi4}) holds is orthonormal, whence $\Phi = \ast \Phi$ when taking the Hodge-$\ast$ w.r.t. $g_\Phi$.

As before, we denote by $W_k$ the $k$-dimensional irreducible $\Sp$-module if there is a unique such module. For instance, $W_8$ is the irreducible $8$-dimensional $\Sp$-module from above, and $W_8^\ast \cong W_8$ as there is a $\Sp$-invariant metric on $W_8$. For its exterior powers, we obtain the decompositions
\begin{align}
\nonumber
\Lambda^0 W_8 &\cong \Lambda^8 W_8 \cong W_1, \qquad
& \Lambda^2 W_8  \cong \Lambda^6 W_8 &\cong W_7 \oplus  W_{21},\\
\label{decom1-L-W8}
\Lambda^1 W_8 &\cong \Lambda^7 W_8 \cong W_8, \qquad
& \Lambda^3 W_8 \cong \Lambda^5 W_8 &\cong W_8 \oplus W_{48},\\
\nonumber
\Lambda^4 W_8 &\cong W_1 \oplus W_7 \oplus W_{27} \oplus W_{35}
\end{align}
where $\Lambda^k W_8 \cong \Lambda^{8-k} W_8$ due to $\Sp$-invariance of the Hodge-$\ast$ isomorphism $\ast: \Lambda^k W_8 \to \Lambda^{8-k} W_8^\ast \cong \Lambda^{8-k} W_8$. Again, we denote by $\Lambda^k_l W_8 \subset \Lambda^k W_8$ the subspace isomorphic to $W_l$ in the above notation. For instance,
\[
\Lambda^3_8 W_8 = \{ \imath_v \Phi \mid v \in W_8\} \quad \mbox{and} \quad \Lambda^3_{48} W_8 = \{ \alpha^3 \in \Lambda^3 W_8 \mid \alpha^3 \wedge \Phi = 0\}.
\]

A {\em $\Sp$-manifold }is a pair $(M, \Phi)$ consisting of an $8$-dimensional oriented Riemannian manifold with a $4$-form $\Phi \in \Om^4(M)$ such that at each $p \in M$, $\Phi_p$ can be written as in (\ref{Phi4}) w.r.t. some positively oriented basis $(e_\mu)$ of $T_pM$ with dual basis $(e^\mu)$ of $T_p^\ast M$, and such that $\Phi^4$ is parallel w.r.t. the Riemannian metric $g = g_\Phi$ on $M$. Moreover, (\ref{decom1-L-W8}) induces a decomposition of differential forms and canonical projections by
\[
\Om^k_l(M) := \Gamma(M, \Lambda^k_l T^\ast M), \qquad \pi^k_l: \Om^k(M) \longrightarrow \Om^k_l(M).
\]

In analogy to (\ref{eq:def-Om-kd}), we define the spaces
\begin{equation} \label{eq:DiffForm-Spin7}
\begin{array}{lll}
\Om^{k;d}_l(M) & = & \Om^k_l(M) \cap d \Om^{k-1}(M)\\[2mm]
\Om^{k;d^\ast}_l(M) & = & \Om^k_l(M) \cap d^\ast \Om^{k+1}(M),
\end{array}
\end{equation}
and by the same argument as in (\ref{eq:Om-kd-*}) and (\ref{eq:Om-kd-Laplace}), it is immediate that
\begin{equation} \label{eq:Om-kd-Laplace-Spin}
\triangle \Om^{k;d}_l(M) = \Om^{k;d}_l(M) \quad \mbox{and} \quad \triangle \Om^{k;d^\ast}_l(M) = \Om^{k;d^\ast}_l(M)
\end{equation}
and that the Hodge-$\ast$ yields isomorphisms $\ast: \Om^{k;d}_l(M) \xlongleftrightarrow{\ast} \Om^{8-k;d^\ast}_l(M)$.

In the proofs of the following results we shall again utilize the results presented in the Appendix \ref{sec:appendix}.

\begin{lemma} \label{lem:decompOm3}
Let $(M, \Phi)$ be a closed $\Sp$-manifold. Then there is a decomposition
\[
\Om^3_{48}(M) = \Hh^3_{48}(M) \oplus \Om^{3;d^\ast}_{48}(M) \oplus \pi^3_{48}(d\Om^3(M)),
\]
and $\Om^{3;d^\ast}_{48}(M)$ is infinite dimensional.
\end{lemma}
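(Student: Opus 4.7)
The plan is to adapt the proof of Proposition \ref{prop:inf-dim} to the $\Sp$ setting. Consider the first-order differential operator
\[
\phi: \Om^2(M) \longrightarrow \Om^3_{48}(M) \oplus \Om^1(M), \qquad \phi(\alpha^2) := \pi^3_{48}(d\alpha^2) + d^\ast \alpha^2,
\]
whose formal adjoint is $\phi^\ast(\beta^3_{48}, \beta^1) = d^\ast \beta^3_{48} + d\beta^1$. Since $d^\ast \beta^3_{48}$ and $d\beta^1$ lie in orthogonal summands of the Hodge decomposition of $\Om^2(M)$, an element lies in $\ker \phi^\ast$ iff both vanish separately. By the Weitzenb\"ock formula the Laplacian preserves the parallel splitting $\Lambda^3 T^\ast M = \Lambda^3_8 T^\ast M \oplus \Lambda^3_{48} T^\ast M$, hence $\Hh^3(M) = \Hh^3_8(M) \oplus \Hh^3_{48}(M)$. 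Combined with the Hodge decomposition of a coclosed $\beta^3_{48} \in \Om^3_{48}(M)$ and the orthogonality of $\Om^3_{48}(M)$ and $\Om^3_8(M)$, one obtains
\[
\ker \phi^\ast = \bigl(\Hh^3_{48}(M) \oplus \Om^{3;d^\ast}_{48}(M)\bigr) \oplus \ker d|_{\Om^1(M)}.
\]

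The principal obstacle is to show that $\phi$ is overdetermined elliptic, from which the Fredholm splitting
\[
\Om^3_{48}(M) \oplus \Om^1(M) = \phi(\Om^2(M)) \oplus \ker \phi^\ast
\]
follows by standard elliptic theory (cf.\ \cite[p.~464, 32 Corollary]{Besse1987} as invoked in Definition \ref{def:reg-cohom}); projecting to the first factor then yields the claimed decomposition (the three summands being mutually $L^2$-orthogonal: harmonic $\perp$ coexact by Hodge theory, and $\la \pi^3_{48}(d\beta^2), \beta_h + d^\ast \gamma \ra = \la d\beta^2, \beta_h \ra + \la dd\beta^2, \gamma \ra = 0$ upon integration by parts). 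The symbol is
\[
\sigma_\xi(\phi)(\alpha^2) = \pi^3_{48}(\xi \wedge \alpha^2) - \imath_{\xi^\#} \alpha^2,
\]
and I plan to prove its injectivity by exploiting transitivity of $\Sp$ on the unit sphere in $W_8$ (whose stabilizer $G_2$ has dimension $14 = \dim \Sp - 7$) to reduce to $\xi = e^0$ in a basis satisfying (\ref{Phi4}). Writing $\alpha^2 = e^0 \wedge \beta^1 + \gamma^2$ with $\beta^1, \gamma^2$ not involving $e^0$, the vanishing of $\imath_{e_0} \alpha^2$ forces $\beta^1 = 0$, while $\pi^3_{48}(e^0 \wedge \gamma^2) = 0$ means $e^0 \wedge \gamma^2 = \imath_v \Phi$ for some $v = v^0 e_0 + w$ with $w \in \rmspan(e_1, \ldots, e_7)$. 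Using $\Phi = e^0 \wedge \varphi + \ast_7 \varphi$ one computes
\[
\imath_v \Phi = v^0 \varphi + \imath_w \ast_7 \varphi - e^0 \wedge \imath_w \varphi,
\]
so the $e^0$-free part forces $v^0 \varphi + \imath_w \ast_7 \varphi = 0$ in $\Lambda^3 V_7$. Since $\varphi \in \Lambda^3_1 V_7$ and $\imath_w \ast_7 \varphi \in \Lambda^3_7 V_7$ lie in inequivalent $G_2$-irreducible summands (see (\ref{decom-L-V7})), each term must vanish; multi-symplecticity of $\ast_7 \varphi$ on $V_7$ then gives $w = 0$, hence $\gamma^2 = -\imath_w \varphi = 0$ and $\alpha^2 = 0$.

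Finally, infinite-dimensionality of $\Om^{3;d^\ast}_{48}(M)$ follows by applying Lemma \ref{lem:codim-inf} to the differential operator $\Om^2(M) \ni \alpha^2 \mapsto \pi^3_{48}(d\alpha^2) \in \Om^3_{48}(M)$, since $\Hh^3_{48}(M)$ is finite-dimensional and the complement of the image in $\Om^3_{48}(M)$ is $\Hh^3_{48}(M) \oplus \Om^{3;d^\ast}_{48}(M)$.
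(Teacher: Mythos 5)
Your proposal is correct and follows essentially the same route as the paper: the paper likewise reduces the decomposition to overdetermined ellipticity of $\phi(\alpha^2) = \pi^3_{48}(d\alpha^2) + d^\ast\alpha^2$ (by analogy with Proposition \ref{prop:inf-dim}), proves injectivity of the symbol at $\xi = e^0$ via $e^0 \wedge \alpha^2 = \imath_{ce_0+v}\Phi$ and the vanishing of the $e^0$-free part $c\varphi + \imath_v \ast_7\varphi$, and obtains infinite-dimensionality from Lemma \ref{lem:codim-inf} using $\mathrm{rank}(\Lambda^2 T^\ast M) = 28 < 48$. Your slightly more detailed justifications (the $G_2$-irreducibility argument for $c\varphi + \imath_w\ast_7\varphi = 0$ and the explicit $L^2$-orthogonality of the three summands) are consistent with what the paper leaves implicit.
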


\begin{proof}
In complete analogy with the proof of Proposition \ref{prop:inf-dim}, the asserted decomposition will follow if we can show that the differential operator $\phi: \Om^2(M) \to \Om^3_{48}(M) \oplus \Om^1(M)$, $\alpha^2 \mapsto \pi^3_{48}(d\alpha^2) + d^\ast \alpha^2$ is overdetermined 
elliptic. The symbol of $\phi$ at $0 \neq \xi \in T_p^\ast M$ is given by
\[
\sigma_\xi(\alpha^2) = \pi^3_{48}(\xi \wedge \alpha^2) - \imath_{\xi^\#} \alpha^2,
\]
and after rescaling $\xi$ to a unit vector and using the fact that $\Sp$ acts transitively on the unit sphere, we may assume w.l.o.g. that $\xi = e^0$ in an orthonormal basis $(e_\mu)$ of $T_pM$ in which (\ref{Phi4}) holds. If $\alpha^2 \in \ker \sigma_{e^0}$, then $\pi^3_{48}(e^0 \wedge \alpha^2) = 0$, whence
\[
e^0 \wedge \alpha^2 = \imath_{c e_0 + v} \Phi = c \varphi - e^0 \wedge (\imath_v \varphi) + \imath_v \ast_7 \varphi
\]
for some $c \in \R$ and $v \in e_0^\perp$. Thus, $c \varphi + \imath_v \ast_7 \varphi = 0$, and from this, it easily follows that $c = 0$ and $v = 0$, so that $e^0 \wedge \alpha^2 = 0$. This together with $\imath_{e_0} \alpha^2 = 0$ implies that $\alpha^2 = 0$, so that $\ker \sigma_{e_0} = 0$, showing the claim.

The assertion on the infinite dimension of $\Om^{3;d^\ast}_{48}(M)$ follows when applying Lemma \ref{lem:codim-inf} to the differential operator $\Om^2(M) \ni \alpha^2 \mapsto \pi^3_{48}(d\alpha^2) \in \Om^3_{48}(M)$, as $rank(\Lambda^2 T^\ast M) = 28 < 48$, and using $\dim \Hh^3_{48}(M) < \infty$.
\end{proof}

\begin{theorem} \label{thm:cohomSpin7}
Let $(M, \Phi)$ be a closed $\Sp$-manifold. Then the differential $\Ll_\Phi$ is regular, and the cohomology algebra $H^\ast_\Phi(M)$ is given as follows:
\[
\begin{array}{rrrrrrrr}
H_\Phi^3(M) \cong & \Hh^3(M) & \oplus & \Om^{3;d^\ast}_{48}(M)\\[1mm]
H_\Phi^4(M) \cong & \Hh^4(M) & \oplus & d\Om^{3;d^\ast}_{48}(M) & \oplus & d^\ast \Om^{5;d}_{48}(M)\\[1mm]
H_\Phi^5(M) \cong & \Hh^5(M) & & & \oplus & \Om^{5;d}_{48}(M)\\[1mm]
\end{array}
\]
with the definitions in (\ref{eq:DiffForm-Spin7}), and $H_\Phi^k(M) \cong \Hh^k(M)$ for $k = 0,1,2,6,7,8$.\end{theorem}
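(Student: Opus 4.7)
The proof plan mirrors the structure of Theorem~\ref{thm:cohomG2} for $G_2$-manifolds: first establish regularity of $\Ll_\Phi$ so that $H^l_\Phi(M) \cong \Hh^l_\Phi(M)$ by Theorem~\ref{thm:cohom-harmonic}(1), then identify $\Hh^l_\Phi(M) = \Hh^l(M) \oplus \Hh^l_\Phi(M)_d \oplus \Hh^l_\Phi(M)_{d^\ast}$ summand by summand using the isomorphisms~(\ref{eq:Hl-split}) and Hodge-$\ast$ duality.

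For regularity, $\Ll_{\Phi;l} \equiv 0$ for $l < 3$ and $l > 8$. Lemma~\ref{lem:elliptic multi-symplectic} gives ellipticity at the endpoints $l = 3, 8$. For $l = 4$ and $l = 7$, I would compute the symbol $\sigma_\xi(\Ll_{\Phi;l})(\alpha) = (\imath_{\xi^\#}\Phi) \wedge \alpha$ at a single unit covector $\xi = e^0$ in an adapted basis, where $\imath_{e_0}\Phi = \varphi$, and check injectivity (respectively surjectivity) by direct calculation on the basis elements; $\Sp$-transitivity on the unit sphere then propagates this to every $\xi \neq 0$. The intermediate cases $l = 5, 6$ follow from Theorem~\ref{thm:cohom-harmonic}(4).

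For the boundary degrees, Proposition~\ref{prop:chomom-LK_0,n} handles $l = 0, 8$, while for $l = 1, 7$ Proposition~\ref{prop:chomom-LK 1-form} plus Bochner's theorem (using that $\Sp$-manifolds are Ricci-flat) forces every $\alpha \in \ker \Ll_\Phi|_{\Om^1(M)}$ to be parallel, hence harmonic. In degree $2$ (dually $6$), the iso~(\ref{eq:Hl-split}) together with $\Hh^1_\Phi(M) = \Hh^1(M)$ kills $\Hh^2_\Phi(M)_d$. For $\Hh^2_\Phi(M)_{d^\ast}$, note that $\alpha \in d^\ast\Om^3(M)$ satisfies $\Ll_\Phi \ast\alpha = 0$ automatically (since $\Om^9(M) = 0$), so the only condition is $d^\ast(\alpha \wedge \Phi) = 0$; combined with $\alpha \in \triangle\Om^2(M)$, this places $\alpha \wedge \Phi$ in $\triangle\Om^6(M) \cap \ker d^\ast = d\Om^5(M)$. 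I would then exploit that $\wedge\Phi$ is a pointwise $\Sp$-equivariant isomorphism $\Om^2(M) \to \Om^6(M)$, acting as $3\ast$ on $\Om^2_7(M)$ and $-\ast$ on $\Om^2_{21}(M)$, to conclude via the isomorphism $d: \Hh^2_\Phi(M)_{d^\ast} \xrightarrow{\sim} \Hh^3_\Phi(M)_d$ from~(\ref{eq:Hl-split}) once the degree-$3$ analysis is in hand.

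For the middle degrees $l = 3, 4, 5$, I would use the pointwise identity $(\imath_v\Phi) \wedge \Phi = 7 \ast v^\flat$ (which comes from $\Phi \wedge \Phi = 14\,\vol$ and $\imath_v$), showing that $\wedge\Phi$ is an isomorphism on $\Lambda^3_8$ with kernel exactly $\Lambda^3_{48}$. Combined with Lemma~\ref{lem:decompOm3} this identifies $(\Hh^3_\Phi)_{d^\ast}$ with $\Om^{3;d^\ast}_{48}(M)$ (infinite-dimensional), while $\Im\Ll_\Phi|_{C^\infty(M)} = \{\imath_{df^\#}\Phi\} \subset \Om^3_8(M)$ accounts for the quotient in $H^3_\Phi(M)$. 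The case $l = 5$ follows by Hodge-$\ast$ duality, and $l = 4$ by applying~(\ref{eq:Hl-split}) to transfer via $d$ and $d^\ast$ between the degree-$3$ and degree-$5$ data, yielding the asserted decomposition $\Hh^4(M) \oplus d\Om^{3;d^\ast}_{48}(M) \oplus d^\ast\Om^{5;d}_{48}(M)$.

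The principal obstacle is showing $(\Hh^3_\Phi(M))_d = 0$ or equivalently $(\Hh^2_\Phi(M))_{d^\ast} = 0$; the Hodge-theoretic framework only links the two via~(\ref{eq:Hl-split}) without identifying either with a classical invariant. Indeed, tracing through the conditions on $\alpha^3 = d\beta^2$ with $\beta^2$ coexact reduces $\Ll_\Phi\alpha^3 = 0$ to $\Ll_\Phi\beta^2 \in d\Om^4(M) \cap d^\ast\Om^6(M) = 0$, i.e.\ $\beta^2 \in \Hh^2_\Phi(M)_{d^\ast}$, which is circular. Breaking this circle will require a direct argument exploiting the $\Sp$-equivariance of $\wedge\Phi: \Om^2(M) \to \Om^6(M)$, parallelism of $\Phi$, and a Weitzenböck/Bochner-type identity on the components $\Om^2_7(M)$ and $\Om^2_{21}(M)$, more delicate than the corresponding argument used for $G_2$-manifolds in Theorem~\ref{thm:cohomG2}.
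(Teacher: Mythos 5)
Your overall architecture matches the paper's, but there are two genuine gaps. The first is in the regularity argument for the intermediate degrees: Theorem \ref{thm:cohom-harmonic}(4) deduces $l$-regularity only from \emph{both} $(l-1)$- and $(l+1)$-regularity, so knowing $3,4,7,8$-regularity cannot give you either $5$- or $6$-regularity --- each of the two missing degrees needs the other, and the interpolation is circular. Unlike the $G_2$ case, where only $l=5$ is missing, here you must compute at least one more symbol directly. The paper in fact avoids interpolation entirely: with $\xi = e^0$ the symbol is $\alpha \mapsto \varphi\wedge\alpha$, which preserves the splitting $\Lambda^k W_8^\ast = \Lambda^k V_7^\ast \oplus e^0\wedge\Lambda^{k-1}V_7^\ast$, and the $G_2$-equivariant map $\wedge\varphi\colon \Lambda^{l-3}V_7^\ast \to \Lambda^l V_7^\ast$ is injective for $l\le 5$ and surjective for $l\ge 5$ (Lemma \ref{Lambda2-V7} at $l=5$, (\ref{eq:form16}) at $l=6$); this gives injectivity of the symbol for $l\le 5$ and surjectivity for $l\ge 6$ in one stroke.

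The second, more serious gap is the one you flag yourself: the vanishing of $\Hh^2_\Phi(M)_{d^\ast}$ (equivalently of $\Hh^3_\Phi(M)_d$), which drives degrees $2,3$ and, by duality, $5,6$. No Weitzenb\"ock-type identity is needed; the missing step is elementary given Lemma \ref{Kar47-48}, which you already cite. For $\alpha^2 = \alpha^2_7+\alpha^2_{21} \in d^\ast\Om^3(M)$ you have \emph{two} usable conditions, not one: $d^\ast(\alpha^2\wedge\Phi)=0$ and the automatic $d^\ast\alpha^2 = 0$. Rewriting \emph{both} via $\alpha^2_7\wedge\Phi = 3\ast\alpha^2_7$ and $\alpha^2_{21}\wedge\Phi = -\ast\alpha^2_{21}$, the first becomes $d(3\alpha^2_7-\alpha^2_{21})=0$ and the second becomes $(d\alpha^2_7 - 3\,d\alpha^2_{21})\wedge\Phi = 0$. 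Substituting $d\alpha^2_{21}=3\,d\alpha^2_7$ from the first into the second yields $d\alpha^2_7\wedge\Phi = d(\alpha^2_7\wedge\Phi) = 3\,d\ast\alpha^2_7 = 0$ and likewise $d\ast\alpha^2_{21}=0$; hence $3\alpha^2_7-\alpha^2_{21}$ is closed and coclosed, and since $\triangle$ preserves the splitting $\Om^2_7\oplus\Om^2_{21}$, both components, and therefore $\alpha^2$ itself, are harmonic. A harmonic coexact form vanishes, so $\Hh^2_\Phi(M)=\Hh^2(M)$ and then $\Hh^3_\Phi(M)_d = d\,\Hh^2_\Phi(M)_{d^\ast}=0$ by (\ref{eq:Hl-split}); the rest of your plan then goes through. (A minor slip in your degree-$2$ sketch: $\triangle\Om^6(M)\cap\ker d^\ast$ equals $d^\ast\Om^7(M)$, not $d\Om^5(M)$, since $\ker d^\ast = \Hh^6(M)\oplus d^\ast\Om^7(M)$.)
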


Observe that by Lemma \ref{lem:decompOm3}, all summands in this decomposition apart from $\Hh^k(M)$ are infinite dimensional.

\begin{proof} We begin by showing the regularity of $\Ll_{\Phi;l}: \Om^{l-3}(M) \to \Om^l(M)$ by showing that they are all either underdetermined elliptic or overdetermined elliptic. In order to see this, we first show that the $G_2$-equivariant map
\[
\wedge \varphi : \Lambda^{3-l} V_7^\ast \longrightarrow \Lambda^l V_7^\ast
\]
is injective for $l \leq 5$ and surjective for $l \geq 5$. 

For $l = 3$, let $0 \neq c \in \R = \Lambda^0V_7^\ast$. Then $c \wedge \varphi = c \varphi \neq 0$, showing the injectivity. Likewise, for $l = 4$, $\alpha^1 \wedge \varphi \neq 0$ for $0 \neq \alpha^1 \in V_7^\ast$ is immediate. 

For $l = 5$, use the decomposition $\Lambda^2 V_7^\ast = \Lambda^2_7 V_7^\ast \oplus \Lambda^2_{14} V_7^\ast$, and then Lemma \ref{Lambda2-V7} immediately implies that the map $\wedge \varphi: \Lambda^2 V_7^\ast \to \Lambda^5 V_7^\ast$ is an isomorphism, whence both injective and surjective.

For $l = 6$, $(\imath_v \ast \varphi) \wedge \varphi \stackrel{(\ref{eq:form16})}= 4 \ast v^\flat$ which shows the surjectivity of $\wedge \varphi: \Lambda^3 V_7^\ast \to \Lambda^6 V_7^\ast$, and for $l=7$, $\wedge \varphi: \Lambda^4 V_7^\ast \to \Lambda^7 V_7^\ast = \R\; \vol$ is surjective since $\ast \varphi \wedge \varphi = \|\varphi\|^2 \vol \neq 0$.

Let $\xi \in T_p^\ast M$ be a unit vector. Since $\Sp$ acts transitively on the unit sphere, we may choose an orthonormal basis $(e_\mu)$ of $T_pM$ for which (\ref{Phi4}) holds such that $\xi = e^0$. Thus, by (\ref{eq:symbol of L_K}) the symbol $\sigma_{e^0}(\Ll_{\Phi;l}): \Lambda^{l-3}T_p^\ast M \to \Lambda^l T_p^\ast M$ of $\Ll_{\Phi}|_{\Om^{l-3}(M)}: \Om^{l-3}(M) \rightarrow \Om^l(M)$ is given w.r.t. this basis as
$$
\sigma_{e_0}(\Ll_{\Phi;l}): \Lambda^{l-3} W_8^\ast \longrightarrow \Lambda^l W_8^\ast, \qquad \alpha^{l-3} \longmapsto \varphi \wedge \alpha^{l-3}.
$$
As before, let $V_7 := e_0^\perp \subset W_8$. Evidently, the splitting $\Lambda^k W_8^\ast =  \Lambda^k V_7^\ast \oplus e^0 \wedge \Lambda^{k-1} V_7^\ast$ is preserved by $\sigma_{e_0}(\Ll_{\Phi;l})$, and from the above, it now follows that $\sigma_{e_0}(\Ll_{\Phi;l})$ is injective for $l \leq 5$ and surjective for $l \geq 6$.

Thus, $\Ll_\Phi$ is regular, so that by Theorem \ref{thm:cohom-harmonic} (1), $H^l_\Phi(M) \cong \Hh^l_\Phi(M)$.

Since $\Phi$ is multi-symplectic, Proposition \ref{prop:chomom-LK_0,n} implies that $H^l_{\Phi}(\Om^\ast(M)) = \Hh^l(M)$ for $l = 0,8$. Moreover, Proposition \ref{prop:chomom-LK 1-form} implies that $\alpha^1 \in H^1_{\Phi}(\Om^\ast(M)) = \ker \Ll_{\Phi}|_{\Om^1(M)}$ only if $\Ll_{(\alpha^1)^\#} (\Phi) =0$. But every vector field whose flow preserves $\Phi$ must be a Killing field, and since $\Sp$-manifolds are Ricci flat, Bochner's theorem implies that $(\alpha^1)^\#$ is parallel, whence so is $\alpha^1$. In particular, $\alpha^1$ is harmonic, and this shows that $H^1_{\Phi}(\Om^\ast(M)) = \Hh^1(M)$, and $H^7_{\Phi}(\Om^\ast(M)) = \ast H^1_{\Phi}(\Om^\ast(M)) = \ast \Hh^1(M) = \Hh^7(M)$.

Next, for $l = 2$, we have $\Hh^2_\Phi(M)_d = 0$ by (\ref{eq:Hl-split1}). Thus, we need to determine
\[
\Hh^2_\Phi(M)_{d^\ast} = \{ \alpha^2 \in d^\ast \Om^3(M) \mid d^\ast (\alpha^2 \wedge \Phi) = 0\}.
\]
By (\ref{decom1-L-W8}), we may decompose $\alpha^2 = \alpha^2_7 + \alpha^2_{21}$, where $\alpha_j \in \Om^2_j (M)$.
By Lemma \ref{Kar47-48} we have 
\begin{equation} \label{wedge-Phi}
\alpha^2_7 \wedge \Phi = 3 \ast \alpha^2_7, \qquad 
\alpha^2_{21} \wedge \Phi = -\ast \alpha^2_{21}.
\end{equation}
Thus, 
\begin{align*}
d^\ast(\alpha^2 \wedge \Phi) & = \ast d \ast (\alpha^2 \wedge \Phi) =
\ast d (3 \alpha^2_7 - \alpha^2_{21}),\\
3 d^\ast \alpha^2 & = 3 \ast d \ast \alpha^2 = \ast (d(\alpha^2_7 \wedge \Phi -3 \alpha^2_{21} \wedge \Phi))
= \ast ((d \alpha^2_7 -3 d \alpha^2_{21}) \wedge \Phi).
\end{align*}
Thus 
$\Ll_{\Phi} \alpha^2 = 0$ iff 
$d (3 \alpha^2_7 - \alpha^2_{21}) = 0$ and $(d \alpha^2_7 -3 d \alpha^2_{21}) \wedge \Phi = 0$. 
Substituting $d\alpha^2_{21} = 3 d\alpha^2_7$ from the first into the second equation, it follows that
\begin{align*}
0 &= d\alpha^2_7 \wedge \Phi = d(\alpha^2_7 \wedge \Phi) \stackrel{(\ref{wedge-Phi})} = 3 d \ast \alpha^2_7\\
0 &= d\alpha^2_{21} \wedge \Phi = d(\alpha^2_{21} \wedge \Phi) \stackrel{(\ref{wedge-Phi})} = - d \ast \alpha^2_{21},
\end{align*}
so that $d^\ast \alpha^2_7 = d^\ast \alpha^2_{21} = 0$. Thus,
$$
d(3 \alpha^2_7 - \alpha^2_{21}) = 0 \qquad \mbox{and} \qquad d^\ast (3 \alpha^2_7 - \alpha^2_{21}) = 0,
$$
whence $3 \alpha^2_7 - \alpha^2_{21}$ is harmonic. 
Since the Laplacian preserves the irreducible decomposition by the Weitzenb\"ock formula (see e.g. \cite[(1.154)]{Besse1987}), it follows that $\alpha^2_7$ and $\alpha^2_{21}$ are harmonic, whence so is $\alpha^2$. Thus, $\Hh^2_{\Phi}(M) \cong \Hh^2(M)$ and hence, $\Hh^6_{\Phi}(M) \cong \Hh^6(M)$.

Now let $l = 3$. Since $\Hh^2_{\Phi}(M) = \Hh^2(M)$, it follows from (\ref{eq:Hl-split}) that $\Hh^3_{\Phi}(M)_d = 0$, whence by (\ref{eq:decomp-Hh_K}) we only need to determine
\[
\Hh^3_{\Phi}(M)_{d^\ast} = \{ \alpha^3 \in d^\ast \Om^4(M) \mid d^\ast(\alpha^3 \wedge \Phi) = 0, (d^\ast \ast \alpha^3) \wedge \Phi = 0\}.
\]
Observe that
\[
(d^\ast \ast \alpha^3) \wedge \Phi = (\ast d \alpha^3) \wedge \Phi \stackrel{\Phi = \ast \Phi} = (d \alpha^3) \wedge \Phi = d(\alpha^3 \wedge \Phi)
\]
whence $\alpha^3 \in \Hh^3_{\Phi}(M)_{d^\ast}$ iff $\alpha^3 \in d^\ast \Om^4(M)$ and $\alpha^3 \wedge \Phi \in \Hh^7(M)$.

On the other hand, for $\alpha^3 \in d^\ast \Om^4(M) \subset \triangle \Om^4(M)$, (\ref{eq:Laplace-commute}) implies that $\alpha^3 \wedge \Phi \in \triangle (\Om^7(M)) = (\Hh^7(M))^\perp$, whence
\[
\Hh^3_{\Phi}(M)_{d^\ast} = \{ \alpha^3 \in d^\ast \Om^4(M) \mid \alpha^3 \wedge \Phi = 0\} = \Om^{3;d^\ast}_{48}(M)
\]
and hence, $\Hh^3_{\Phi}(M)$ is of the asserted form.

Thus, $\Hh^5_{\Phi}(M) = \ast \Hh^3_{\Phi}(M) \cong \Hh^5(M) \oplus \Om^{5;d}_{48}(M)$, and by (\ref{eq:Hl-split}), $\Hh^4_{\Phi}(M)$ is of the asserted form.
\end{proof}

%%%%%%%%%%%%%%%%%%%%%%%%%%%%%%%%%%%%%%%%%%%%%%%%%%%%%%%%%
\section{Deformations of $G_2$-and $\Sp$-structures}\label{sec:deform}
%%%%%%%%%%%%%%%%%%%%%%%%%%%%%%%%%%%%%%%%%%%%%%%%%%%%%%%%%

While Theorems \ref{thm:cohomG2} and \ref{thm:cohomSpin7} give a complete description of $H^\ast_{\ast \varphi}(M^7)$ and $H^\ast_\Phi(M^8)$ for closed $G_2$- and $\Sp$-manifolds, respectively, the cohomology Lie algebras $H^\ast_{\ast \varphi}(M^7, TM^7)$ and $H^\ast_\Phi(M^8, TM^8)$ seem to be more difficult to determine. In this section, we shall use Proposition \ref{prop:homology-general} to describe $H^k_{\ast \varphi}(M, TM)$ and $H^k_\Phi(M, TM)$, respectively, for $k = 0$ and $k = 3$.

Let $V$ be an oriented $7$-dimensional vector space, and let $\Lambda^3 _{G_2} V^\ast \subset \Lambda^3 V^\ast$ be the set of $3$-forms which can be written as in (\ref{varphi}) for some oriented basis $(e^i)$ of $V^\ast$. Likewise, for an oriented $8$-dimensional vector space $W$, we let $\Lambda^4_\Sp W^\ast\subset \Lambda^4 W^\ast$ be the set of $4$-forms which can be written as in (\ref{Phi4}) for some oriented basis $(e^\mu)$ of $W^\ast$. By definition, the groups $Gl^+(V)$ and $Gl^+(W)$ of orientation preserving automorphisms of $V$ and $W$, respectively, act transitively on $\Lambda^3 _{G_2} V^\ast$ and $\Lambda^4_\Sp W^\ast$, respectively, so that
\[
\Lambda^3 _{G_2} V^\ast = Gl^+(V)/G_2, \qquad \mbox{and} \qquad 
\Lambda^4_\Sp W^\ast = Gl^+(W)/\Sp.
\]

\begin{lemma}\label{lem:uniq} With $V$ and $W$ as above, the maps
\begin{eqnarray*} & \frak{C}: \Lambda^3 _{G_2} V^\ast \longrightarrow \Lambda ^3 V^\ast \otimes V, & \qquad \varphi \longmapsto \p_{g_\varphi} (\ast_{g_\varphi} \varphi)\\
& \frak{P}: \Lambda^4_\Sp W^\ast \longrightarrow \Lambda^3 W^\ast \otimes W, & \qquad \Phi \longmapsto \p_{g_\Phi} \Phi
\end{eqnarray*}
are injective immersions, and they are $Gl^+(V)$- and $Gl^+(W)$-equivariant, respectively. Here, $\p_g$ is the map from (\ref{eq:def-partial}), and where as before, $g_\varphi$ and $g_\Phi$ denotes the metric induced by $\varphi$ and $\Phi$, respectively.
\end{lemma}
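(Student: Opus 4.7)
My plan is first to verify equivariance directly, and then to reduce the injective immersion claim to a stabilizer computation at a single base point. Equivariance of both $\frak{C}$ and $\frak{P}$ will follow from the naturality of the three constituent operations: for $A \in Gl^+(V)$, the metric obeys $g_{A\cdot\varphi}(u,v) = g_\varphi(A^{-1}u, A^{-1}v)$ (by the intrinsic nature of Hitchin's formula \eqref{eq:form-1def}), the Hodge star satisfies $\ast_{A\cdot g}(A\cdot\omega) = A\cdot\ast_g\omega$, and the contraction $\partial_g$ from \eqref{eq:def-partial} satisfies $\partial_{A\cdot g}(A\cdot\omega) = A\cdot\partial_g\omega$; composing the three identities yields $\frak{C}(A\varphi) = A\cdot \frak{C}(\varphi)$ and analogously for $\frak{P}$. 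Since $\Lambda^3_{G_2}V^\ast = Gl^+(V)/G_2$ and $\frak{C}$ is equivariant, injectivity and immersion of $\frak{C}$ both amount to the single claim that $\mathrm{Stab}_{Gl^+(V)}(K) = G_2$, where $K := \frak{C}(\varphi_0)$ for some base point $\varphi_0$; the inclusion $\supset$ is automatic, and the analogous reduction applies to $\frak{P}$ with $\Sp$ in place of $G_2$.

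\textbf{Infinitesimal step.} For $X \in \mathfrak{gl}(V)$, I plan to differentiate $\frak{C}(\exp(tX)\cdot\varphi_0)$ at $t=0$. Using $\dot g_t|_{t=0}(u,v) = -2\, g(X_{\mathrm{sym}}u,v)$, where $X_{\mathrm{sym}}$ is the $g$-symmetric part of $X$, a direct computation in a $g$-orthonormal basis will yield
\[
X\cdot K \;=\; \partial_g(X\cdot\ast\varphi_0) \,+\, 2\sum_{i,j}X^{ij}_{\mathrm{sym}}(\iota_{e_i}\ast\varphi_0)\otimes e_j.
\]
The condition $X\cdot K = 0$ forces $\partial_g(X\cdot\ast\varphi_0)$ to lie in the image of the $G_2$-equivariant map $\alpha\colon\mathrm{Sym}^2 V\to\Lambda^3V^\ast\otimes V$, $S\mapsto\sum_{i,j}S^{ij}(\iota_{e_i}\ast\varphi_0)\otimes e_j$. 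Decomposing $\mathrm{Sym}^2 V = V_1 \oplus V_{27}$, the $V_1$-summand $\mathbb{R}\cdot g^{-1}$ maps to $\mathbb{R}\cdot K\subset\partial_g(\Lambda^4 V^\ast)$, and the key claim is that $\alpha(V_{27})\cap\partial_g(\Lambda^4 V^\ast)=0$. I will verify this by testing $\alpha$ on a traceless diagonal $S=\mathrm{diag}(s_1,\ldots,s_7)\in V_{27}$: the requirement that the forms $s_i\iota_{e_i}\ast\varphi_0$ be the contractions $\iota_{e_i}\Psi'$ of a common $4$-form $\Psi'$ gives $(s_i-s_j)(\ast\varphi_0)_{ij\cdot\cdot}=0$; inspection of \eqref{varphi*} shows every pair $(i,j)$ appears in a nonzero component of $\ast\varphi_0$, so $s_i=s_j$ for all $i,j$, and tracelessness then forces $S=0$. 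By Schur and irreducibility of $V_{27}$ this gives the claim, whence $X_{\mathrm{sym}} = \lambda I$ for some $\lambda\in\mathbb{R}$ and the equation reduces to $X\cdot\ast\varphi_0 = -2\lambda\ast\varphi_0$. Writing $X = \lambda I + X_{\mathrm{skew}}$ with $X_{\mathrm{skew}}\in\mathfrak{so}(V,g) = V_7\oplus\mathfrak{g}_2$ and using that $\ast\varphi_0$ spans the $V_1$-summand of $\Lambda^4V^\ast = V_1\oplus V_7\oplus V_{27}$, a further Schur argument forces $\lambda = 0$ and the $V_7$-part of $X_{\mathrm{skew}}$ to vanish, so $X\in\mathfrak{g}_2$. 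The analogous analysis for $\frak{P}$ uses $\mathrm{Sym}^2 W = W_1\oplus W_{35}$ and \eqref{decom1-L-W8}, together with the corresponding pair-covering property of the Cayley form \eqref{Phi4}.

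\textbf{Global step and main obstacle.} The infinitesimal step will yield $\mathrm{Stab}(K)^\circ = G_2$. For the remaining components, any $A\in\mathrm{Stab}(K)$ must normalize $G_2$; since $G_2$ acts irreducibly on $V$ and has trivial outer automorphism group, Schur's lemma gives $N_{Gl^+(V)}(G_2) = G_2\cdot\mathbb{R}^\ast_+ I$. Writing $A = g\lambda I$ and computing that a scalar $\lambda$ acts on $K\in\Lambda^3V^\ast\otimes V$ by $\lambda^{-2}$ forces $\lambda = 1$ and so $A\in G_2$; the parallel argument handles $\frak{P}$ (with no obstruction from $-I$, which already lies in $\Sp$). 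The hardest part of this plan is the representation-theoretic vanishing $\alpha(V_{27})\cap\partial_g(\Lambda^4V^\ast)=0$ (and its $\Sp$-analogue for $\alpha(W_{35})$): while $G_2$-equivariance reduces it to a single test element via Schur, the non-triviality of the test rests on the concrete combinatorics of $\ast\varphi$ and $\Phi$ rather than on an abstract representation-theoretic identity.
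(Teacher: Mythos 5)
Your proposal is correct, but the key step — showing $\mathrm{Stab}_{Gl^+(V)}(\frak{C}(\varphi))=G_2$ and $\mathrm{Stab}_{Gl^+(W)}(\frak{P}(\Phi))=\Sp$ — goes by a genuinely different route than the paper's. Both arguments make the same initial reduction (equivariance plus homogeneity of $\Lambda^3_{G_2}V^\ast=Gl^+(V)/G_2$ turn ``injective immersion'' into a single stabilizer identity, with $\supseteq$ obvious). The paper then settles $\subseteq$ globally by citing Dynkin's classification of intermediate subgroups: up to central scalars the only connected groups strictly between $G_2$ and $Gl^+(7,\R)$ are $SO(7)$ and $Sl(7,\R)$; $SO(7)$ is excluded because $\Lambda^3 V^\ast$ and $V^\ast$ are inequivalent irreducible $SO(7)$-modules, so $\Lambda^3V^\ast\otimes V$ has no nonzero $SO(7)$-fixed vector, and central scalars are excluded by the nonzero scaling weight. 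You instead compute the infinitesimal stabilizer by hand, splitting $\mathfrak{gl}(V)=\R\,\mathrm{Id}\oplus\mathrm{Sym}^2_0\oplus\mathfrak{so}(7)$, identifying $\mathrm{Sym}^2_0V\cong V_{27}$, and combining Schur's lemma with the explicit combinatorics of $\ast\varphi$ (every index pair occurs in $\ast\varphi$ with nonzero double contraction, which does hold) to kill the $V_{27}$-, $V_1$- and $V_7$-parts; this only yields the identity component, so you need the extra normalizer argument for the discrete part, which the paper's global classification absorbs for free. Your route is longer but self-contained modulo standard $G_2$- and $\Sp$-representation theory, and the infinitesimal formula you derive is independently useful; the paper's is shorter at the price of invoking Dynkin. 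Two points to nail down in the write-up: the component argument needs $\mathrm{Out}(G_2)=1$ (resp.\ $\mathrm{Out}(\Sp)=1$) together with the fact that $V_7$ (resp.\ $W_8$) is absolutely irreducible, so the commutant is $\R\,\mathrm{Id}$; and your scaling weight $\lambda^{-2}$ for the natural action on $\Lambda^3V^\ast\otimes V$ is the correct one (the paper states $\lambda^{-4}$, which is the weight of the $4$-form before the contraction $\p_g$ is applied) — for either proof all that matters is that the weight is nonzero.
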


\begin{proof} The equivariance of these maps is immediate from the definition. Thus, as $\Lambda^3 _{G_2} V^\ast $ and $\Lambda^4_\Sp W^\ast$ are homogeneous spaces, $\frak{C}$ and $\frak{P}$ are injective immersions iff $Stab_{Gl^+(7,\R)}(\p_g (\ast_{g_\varphi} \varphi)) = G_2$ and $Stab_{Gl^+(8,\R)}(\p_g \Phi) = \Sp$, respectively. The inclusions $\supseteq$ are evident. For the reverse inclusion, observe that the only intermediate Lie groups are given by the following inclusions for any subgroup $\Gamma \subset \R^+ Id$ (see e.g. \cite{Dynkin1952}):

\begin{equation} \label{intermediate}
\xymatrix{
\Gamma \cdot G_2 \ar@{^(->}[r] & \Gamma \cdot SO(7) \ar@{^(->}[r] & \Gamma \cdot Sl^+(7, \R)\\
G_2 \ar@{^(->}[r] \ar@{^(->}[u] & SO(7) \ar@{^(->}[r] \ar@{^(->}[u] & Sl(7, \R) \ar@{^(->}[u]\\
\Gamma \cdot \Sp \ar@{^(->}[r] & \Gamma \cdot SO(8) \ar@{(->}[r] & \Gamma \cdot Sl^+(8, \R)\\
\Sp \ar@{^(->}[r] \ar@{^(->}[u] & SO(8) \ar@{^(->}[r] \ar@{^(->}[u] & Sl(8, \R) \ar@{^(->}[u]
}
\end{equation}
Since $\Lambda^3 V^\ast$ and $V^\ast$ ($\Lambda^3 W^\ast$ and $W^\ast$, respectively) are inequivalent irreducible $SO(7)$-modules ($SO(8)$-modules, respectively), there is no non-zero element in $\Lambda^3 V^\ast \otimes V$ (in $\Lambda^3 W^\ast \otimes W$, respectively) which is invariant under $SO(7)$ ($SO(8)$, respectively). Thus, $Stab_{Gl^+(V)} \p_g (\ast_{g_\varphi} \varphi)$ cannot contain $SO(7)$ and $Stab_{Gl^+(V)} \p_g \Phi$ cannot contain $SO(8)$.

Likewise, for $\lambda > 0$, we have $(\lambda Id_V) \cdot \p_g (\ast_{g_\varphi} \varphi) = \lambda^{-4} \p_g (\ast_{g_\varphi} \varphi)$ and $(\lambda Id_V) \cdot \p_g \Phi = \lambda^{-4} \p_g \Phi$, whence $Stab_{Gl^+(V)} \p_g (\ast_{g_\varphi} \varphi) \cap \R^+ Id_V = Id_V$ and $Stab_{Gl^+(V)} \p_g \Phi \cap \R^+ Id_W = Id_W$, and from this and (\ref{intermediate}) it follows that $Stab_{Gl^+(7,\R)}(\p_g (\ast_{g_\varphi} \varphi)) = G_2$ and $Stab_{Gl^+(8,\R)}(\p_g \Phi) = \Sp$, which completes the proof.
\end{proof}

\begin{proposition} \label{prop:H0MTM}
For a $G_2$-manifold $(M^7, \varphi)$,
\begin{equation} \label{H0MTM_G2}
H^0_{\ast \varphi}(M^7, TM^7) = \{ X \in {\frak X}(M^7) \mid \Ll_X \varphi = 0\}.
\end{equation}
In particular, if $M^7$ is closed, then $\dim H^0_{\ast \varphi}(M^7, TM^7) = b^1(M^7)$.

Likewise, for a $\Sp$-manifold $(M^8, \Phi)$,
\begin{equation} \label{H0MTM_Sp}
H^0_\Phi(M^8, TM^8) = \{ X \in {\frak X}(M^8) \mid \Ll_X \Phi = 0\}.
\end{equation}
In particular, if $M^8$ is closed, then $\dim H^0_\Phi(M^8, TM^8) = b^1(M^8)$.
\end{proposition}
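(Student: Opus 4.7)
The plan is to first identify $H^0$ with a stabilizer Lie algebra via Theorem \ref{thm:FN-cohomology}(1), then transfer the stabilizer condition from $\hat\Psi$ back to $\varphi$ (resp.\ $\Phi$) by invoking the pointwise injectivity of $\frak C$ and $\frak P$ from Lemma \ref{lem:uniq}, and finally carry out the dimension count in the closed case via Bochner's theorem.

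By Theorem \ref{thm:FN-cohomology}(1) applied to $K = \hat\Psi$, the cohomology $H^0_{\ast\varphi}(M^7, TM^7)$ is the Lie algebra of vector fields $X$ satisfying $\Ll_X \hat\Psi = 0$, where $\hat\Psi = \p_g(\ast\varphi) = \frak C(\varphi)$; likewise $H^0_\Phi(M^8, TM^8) = \{X \in \X(M^8) \mid \Ll_X \frak P(\Phi) = 0\}$. The crucial reduction is to show
\[\Ll_X \hat\Psi = 0 \iff \Ll_X \varphi = 0\]
(and analogously for $\Phi$). The direction ``$\Leftarrow$'' is immediate: if $X$ preserves $\varphi$, its flow preserves the canonically associated metric $g_\varphi$ (defined by (\ref{eq:form-1def})), hence every tensor naturally built from $(\varphi, g_\varphi)$, in particular $\hat\Psi = \frak C(\varphi)$. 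For ``$\Rightarrow$'', letting $\phi_t$ be the flow of $X$, naturality of the construction (notably $g_{\phi_t^*\varphi} = \phi_t^* g_\varphi$) together with the equivariance of $\frak C$ yields
\[\phi_t^* \hat\Psi = \phi_t^* \frak C(\varphi) = \frak C(\phi_t^* \varphi).\]
Thus $\phi_t^* \hat\Psi = \hat\Psi$ reads $\frak C(\phi_t^* \varphi) = \frak C(\varphi)$ pointwise, and the injectivity of $\frak C$ from Lemma \ref{lem:uniq} forces $\phi_t^* \varphi = \varphi$, i.e.\ $\Ll_X \varphi = 0$. The $\Sp$-case is identical, replacing $\frak C$ by $\frak P$.

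Now assume $M^7$ is closed. Since $\Ll_X \varphi = 0$ implies $\Ll_X g_\varphi = 0$, such an $X$ is a Killing field; as $G_2$-manifolds are Ricci-flat, Bochner's theorem gives that every Killing field is parallel. Conversely, a parallel $X$ preserves every parallel tensor (both $\nabla X$ and $\nabla \varphi$ vanish in the standard formula relating $\Ll_X \varphi$ to $\nabla_X \varphi$ and $\nabla X$). Hence $\{X \mid \Ll_X \varphi = 0\}$ equals the space of parallel vector fields, which the musical isomorphism identifies with the space of parallel $1$-forms; a further application of the Weitzenb\"ock identity on a compact Ricci-flat manifold shows every harmonic $1$-form is parallel, so this space coincides with $\Hh^1(M^7)$, of dimension $b^1(M^7)$. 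The $\Sp$ argument is verbatim the same.

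The main obstacle is the equivalence $\Ll_X \hat\Psi = 0 \iff \Ll_X \varphi = 0$ (resp.\ $\Phi$): the ``$\Leftarrow$'' direction is routine, but ``$\Rightarrow$'' depends essentially on the pointwise injectivity of $\frak C$ and $\frak P$ established in Lemma \ref{lem:uniq} via the Dynkin classification of intermediate subgroups. Once this reduction is in hand, the dimension count is classical Bochner theory.
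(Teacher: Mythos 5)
Your proof is correct and follows essentially the same route as the paper: identify $H^0$ with the stabilizer of $\hat\Psi$ via Theorem \ref{thm:FN-cohomology}(1), transfer the condition to $\varphi$ (resp.\ $\Phi$) using the equivariance and injectivity of $\frak C$ (resp.\ $\frak P$) from Lemma \ref{lem:uniq}, and conclude with Bochner's theorem on a Ricci-flat closed manifold. The only cosmetic difference is that the paper differentiates the equivariance identity and uses the injectivity of $d\frak C$ (the immersion property), whereas you work with the flow and use the injectivity of $\frak C$ itself; both are supplied by Lemma \ref{lem:uniq}.
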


This proposition implies the 4th parts in Theorems \ref{thm:hG2} and \ref{thm:hSp}, respectively.

For a closed $G_2$- or $\Sp$-manifold, $b^1(M) = 0$ unless the holonomy of $(M, g)$ is contained in $SU(3) \subsetneq G_2$ or in $G_2 \subsetneq \Sp$, respectively. Thus, for a closed $G_2$- or $\Sp$-manifold with full holonomy, the $0$-order cohomology vanishes.

\begin{proof}
Let $X \in {\frak X}(M^7)$ be a vector field, $p \in M^7$ and denote by $F_X^t$ the local flow along $X$, defined in a neighborhood of $p$. Then because of the pointwise equivariance of $\frak{C}$ we have
\[
(F_X^t)^\ast \Big(\p_ g \ast \varphi \Big)_{F_X^t(p)} = (F_X^t)^\ast \Big(\frak{C}(\varphi)_{F_X^t(p)}\Big) = \frak{C}\Big((F_X^t)^\ast(\varphi_{F_X^t(p)})\Big)
\]
and taking the derivative at $t=0$ yields
\begin{equation} \label{eq:Lie-equiv}
\Ll_X (\p_g \ast \varphi)_p = \Ll_X (\frak{C}(\varphi))_p = d\frak{C} (\Ll_X \varphi)_p.
\end{equation}
Now $\Ll_X (\p_g \ast \varphi) = [X, \p_g \ast \varphi]^{FN}$, and since $\frak C$ is an immersion by Lemma \ref{lem:uniq}, it follows that $X \in H^0_{\ast \varphi}(M^7, TM^7) = \ker ad_{\p_g \ast \varphi}$ iff $\Ll_X\varphi = 0$, showing (\ref{H0MTM_G2}).

Since $\varphi$ uniquely determines the Riemannian metric $g_\varphi$ on $M^7$, any vector field satisfying $\Ll_X \varphi = 0$ must be a Killing vector field, and if $M^7$ is closed, the Ricci flatness of $G_2$-manifolds and Bochner's theorem imply that $X$ must be parallel, showing that in this case, $\dim H^0_\Phi(M^7, TM^7) = b^1(M^7)$.

The proof for $\Sp$-manifolds is completely analogous.
\end{proof}

Let us now consider deformations of $G_2$- and $\Sp$-structures. Recall that a $G_2$-structure on an oriented manifold $M^7$ is a $3$-form $\varphi \in \Om^3(M^7)$ such that at each $p \in M^7$, $\varphi_p$ is of the form (\ref{varphi}) for some oriented basis $(e^i)$ of $T_p^\ast M^7$. Similarly, a $\Sp$-structure, on an oriented manifold $M^8$ is a $4$-form $\Phi \in \Om^4(M^8)$ such that at each $p \in M^8$, $\Phi_p$ is of the form (\ref{Phi4}) for some oriented basis $(e^\mu)$ of $T_p^\ast M^8$.

Evidently, a $G_2$- and $\Sp$-structure induces a Riemannian metric on the underlying manifold, but in general, $\varphi$ and $\Phi$, respectively, need not be parallel w.r.t. this metric.

\begin{definition} \label{def:deform}
Let $(M^7, \varphi_0)$ be a $G_2$-manifold.  A 3-form $\dot \varphi_0 \in \Om^3(M^7)$ is called a {\em torsion free infinitesimal deformation of $\varphi_0$} if there exists a family $(\varphi_t)$ of $G_2$-structures which depend fiberwise smoothly on $t$, such that 
\begin{equation} \label{eq:infi-def-G2}
\left. \varphi_t \right|_{t=0} = \varphi_0, \quad 
\left. \dfrac d{dt} \right|_{t=0} \varphi_t = \dot \varphi_0, \quad \mbox{and} \quad \left. \dfrac d{dt} \right|_{t=0} \nabla^{g_t} \varphi_t = 0,
\end{equation}
where $g_t = g_{\varphi_t}$ is the Riemannian metric induced by $\varphi_t$ and where the derivatives are taken fiberwise.
We call an infinitesimal deformation {\em trivial }if $\dot \varphi_0 = \Ll_X \varphi_0$ for some vector field $X \in {\frak X}(M^7)$.

For a $\Sp$-manifold $(M^8, \Phi_0)$, torsion free infinitesimal deformations and trivial deformations of $\Phi_0$ are defined analogously.
\end{definition}

For a $G_2$-structure $\varphi$ and the induced Riemannian metric $g_\varphi$, there is a section $T \in \Om^1(M^7, TM^7)$, called the {\em torsion endomorphism of $\varphi$}, (cf. \cite[Proposition 3.3]{KLS}, see also \cite{FG1982}) such that for $v \in TM^7$
\begin{equation} \label{eq:nabla-G2}
\nabla_v \varphi = \imath_{T(v)} \ast_{g_\varphi} \varphi.
\end{equation}

Thus, for $v \in TM$ we have
\[
\left. \dfrac d{dt} \right|_{t=0} \nabla^{g_t}_v \varphi_t = \imath_{(\left.\frac d{dt}\right|_0 T^t(v))} (\ast_{g_0} \varphi_0),
\]
whence the last equation in (\ref{eq:infi-def-G2}) is equivalent to
\begin{equation} \label{eq:dT=0}
\left. \dfrac d{dt} \right|_{t=0} T^t = 0,
\end{equation}
again taking the derivative pointwise. 

It was shown in \cite[Section 5]{KLS} that there is a bundle isomorphism $\tau: T^\ast M^7 \otimes TM^7 \to \Lambda^6 T^\ast M^7 \otimes TM^7$ such that for $\chi_\varphi := \frak{C}(\varphi) = \p_{g_\varphi} \ast_{g_\varphi} \varphi \in \Om^3(M^7, TM^7)$ we have
\begin{equation} \label{prop:chi-chi}
{}[\chi_\varphi, \chi_\varphi]^{FN} = \tau(T) \in \Om^6(M, TM),
\end{equation}
where by abuse of notation we denote by $\tau: \Om^1(M, TM) \to \Om^6(M, TM)$ the pointwise application of $\tau$ to sections. Therefore,
\begin{align*}
\tau \left(  \left. \dfrac d{dt} \right|_{t=0} T^t \right) = \left. \dfrac d{dt} \right|_{t=0} \tau(T^t) &= \left. \dfrac d{dt} \right|_{t=0} [\chi_{\varphi_t}, \chi_{\varphi_t}] = 2 \left[ \chi_{\varphi_0}, \left. \dfrac d{dt} \right|_{t=0}\chi_{\varphi_t} \right]\\
&= 2 \left[ \chi_{\varphi_0}, d{\frak C}(\dot \varphi_0)\right],
\end{align*}
so that $\dot \varphi_0  \in \Om^3(M^7)$ is a torsion free infinitesimal deformation of $\varphi_0$ iff (\ref{eq:dT=0}) holds iff $d{\frak C}(\dot \varphi_0) \in \ker(ad_{\chi_{\varphi_0}}: \Om^3(M^7, TM^7) \to \Om^6(M^7, TM^7))$. Since ${\frak C}$ is an immersion and hence $d{\frak C}$ injective by Lemma \ref{lem:uniq}, we have an isomorphism
\begin{align*}
\{ \text{torsion free\;}&\text{infinitesimal deformations of $\varphi_0$}\} \stackrel{d{\frak C}} \cong\\
\nonumber
&\ker\Big(ad_{\chi_{\varphi_0}}: \Om^3(M^7, TM^7) \to \Om^6(M^7, TM^7)\Big) \cap \Im(d{\frak C}).
\end{align*}

Observe that by (\ref{eq:Lie-equiv})
\begin{equation*}
d{\frak C}(\Ll_X \varphi_0) = \Ll_X({\frak C}(\varphi_0)) = [X, \chi_{\varphi_0}]^{FN} = - ad_{\chi_{\varphi_0}}(X),
\end{equation*}
whence there is an induced inclusion
\begin{align*}
\dfrac{\{ \text{torsion free infinitesimal deformations of $\varphi_0$}\}}{\{\text{trivial deformations of $\varphi_0$}\}}\xhookrightarrow{\quad \mbox{$d\frak C$}\quad} H^3_{\varphi_0}(M^7, TM^7).
\end{align*}

The deformations of a torsion free $\Sp$-structure $\Phi_0$ on an oriented manifold $M^8$ can be discussed analogously. Here, we get an isomorphism
\begin{align*}
\{ \text{torsion free\;}&\text{infinitesimal deformations of $\Phi_0$}\} \stackrel{d{\frak P}} \cong\\
\nonumber
&\ker\Big(ad_{P_{\Phi_0}}: \Om^3(M^8, TM^8) \to \Om^6(M^8, TM^8)\Big) \cap \Im(d{\frak P}),
\end{align*}
and for a vector field $X \in {\frak X}(M^8)$ we have
\begin{equation*} 
d_{\varphi_0}{\frak P}(\Ll_X \varphi_0) = \Ll_X({\frak P}(\Phi_0)) = [X, P_{\Phi_0}]^{FN} = - ad_{P_{\Phi_0}}(X)
\end{equation*}
and hence an injective map
\begin{align*}
\dfrac{\{ \text{torsion free infinitesimal deformations of $\Phi_0$}\}}{\{\text{trivial deformations of $\Phi_0$}\}} \hookrightarrow H^3_{\Phi_0}(M^8, TM^8).
\end{align*}

Thus, what we have shown is the following proposition, which implies the 5th parts in Theorems \ref{thm:hG2} and \ref{thm:hSp}, respectively.

\begin{proposition} \label{prop:H3MTM}
For a $G_2$-manifold $(M^7, \varphi)$, the cohomology $H^3_{\ast \varphi}(M^7, TM^7)$ contains the space of all torsion free infinitesimal deformations of $\varphi$ modulo trivial deformations. In particular, if $M^7$ is closed then $\dim H^3_{\ast \varphi}(M^7, TM^7) \geq b^3(M^7) >0$.

Likewise, for a $\Sp$-manifold $(M^8, \Phi)$, the cohomology $H^3_\Phi(M^8, TM^8)$ contains the space of all torsion free infinitesimal deformations of $\Phi$ modulo trivial deformations. In particular, if $M^8$ is closed then $\dim H^3_\Phi(M^8, TM^8) \geq b^4_1(M^8)+b^4_7(M^8)+b^4_{35}(M^8) >0$ (cf. (\ref{eq:bkl}) for the definition of $b^4_l(M)$).
\end{proposition}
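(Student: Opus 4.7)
The first assertion in each case is essentially already established in the discussion preceding the proposition: I would simply observe that the pointwise $GL^+$-equivariance of $\mathfrak{C}$ (respectively $\mathfrak{P}$) from Lemma \ref{lem:uniq}, combined with the Maurer--Cartan identity (\ref{prop:chi-chi}), yields the injection
\[
\frac{\{\text{torsion free infinitesimal deformations of }\varphi_0\}}{\{\text{trivial deformations of }\varphi_0\}} \hookrightarrow H^3_{\ast\varphi_0}(M^7, TM^7),
\]
since $d\mathfrak{C}$ is injective by Lemma \ref{lem:uniq}, intertwines variation of $\varphi$ with $ad_{\chi_{\varphi_0}}$, and sends $\Ll_X\varphi_0$ to $-ad_{\chi_{\varphi_0}}(X)$; the $\Sp$ case will follow identically with $\mathfrak{P}$ in place of $\mathfrak{C}$.

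For the dimension bounds in the closed case, I would invoke Joyce's moduli theorems. For a closed $G_2$-manifold, Joyce (\cite{Joyce2000}, Theorem 10.4.4) shows that the moduli space of torsion-free $G_2$-structures on $M^7$ modulo diffeomorphisms isotopic to the identity is a smooth manifold of dimension $b^3(M^7)$ near $[\varphi_0]$. The plan is to identify its tangent space at $[\varphi_0]$ with the LHS of the above injection: the numerator parametrizes first-order variations preserving torsion-freeness, and the denominator corresponds to the tangent to the orbit of $\mathrm{Diff}_0(M^7)$. This will give $\dim H^3_{\ast\varphi_0}(M^7, TM^7) \geq b^3(M^7)$, and since $\varphi_0$ itself is a nonzero parallel---in particular harmonic---section of $\Lambda^3_1 T^*M^7$, we will have $b^3(M^7) \geq b^3_1(M^7) \geq 1$.

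The $\Sp$ case proceeds identically via the corresponding Joyce result (\cite{Joyce2000}, Theorem 10.6.5), which asserts that the moduli of torsion-free $\Sp$-structures modulo $\mathrm{Diff}_0(M^8)$ is smooth of dimension $b^4_1(M^8) + b^4_7(M^8) + b^4_{35}(M^8)$; positivity is automatic because $\Phi_0 \in \Om^4_1(M^8)$ is itself a nonzero parallel, hence harmonic, $4$-form, so $b^4_1(M^8) \geq 1$. The main obstacle---modest but unavoidable---will be matching the infinitesimal torsion-free condition of Definition \ref{def:deform}, namely $\tfrac{d}{dt}|_0 \nabla^{g_t}\varphi_t = 0$, which by (\ref{eq:nabla-G2}) and (\ref{eq:dT=0}) is equivalent to $\tfrac{d}{dt}|_0 T^t = 0$, with the linearization of Joyce's ``torsion-free for all $t$'' condition that underlies his smoothness theorem. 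Granted this identification and the smoothness of the moduli, the dimensions will agree on the nose, so the stated inequality follows.
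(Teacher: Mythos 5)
Your proposal is correct and follows essentially the same route as the paper: the injection of torsion-free infinitesimal deformations modulo trivial ones into $H^3_{\ast\varphi}(M^7,TM^7)$ (resp.\ $H^3_\Phi(M^8,TM^8)$) is exactly the content of the discussion preceding the proposition, and the paper likewise obtains the dimension bounds by citing Joyce's regularity/smoothness theorems for the moduli spaces (stated there as \cite[Theorems 11.2.8 and 11.5.9]{Joyce2007}, equivalent to the results you quote from \cite{Joyce2000}). The matching of Definition \ref{def:deform} with the linearization of Joyce's torsion-free condition, and the positivity via harmonicity of $\varphi$ and $\Phi$, are handled just as you indicate.
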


The final statements on the relation of deformations and the Betti numbers is due to the regularity theorem of the moduli space of such manifolds due to D. Joyce, see \cite{JoyceG2}, \cite[Theorem 11.2.8]{Joyce2007} for the case of $G_2$ and \cite{JoyceSp}, \cite[Theorem 11.5.9]{Joyce2007} for $\Sp$-manifolds.

\section{Appendix} \label{sec:appendix}

In this appendix, we shall collect some formulas and results which we need in the calculations in this paper. 

\begin{lemma} \label{formulas} (cf. \cite[Lemma 6.1]{KLS}) For all $u \in V_7$ the following identities hold.
\begin{align}
\label{eq:form15}
\ast \varphi \wedge (\imath_u \varphi) =\;& 3 \ast u^\flat\\
\label{eq:form16}
\ast (u^\flat \wedge \varphi) \wedge \varphi =\; \varphi \wedge (\imath_u \ast \varphi) =\;& -4 \ast u^\flat
\end{align}
\end{lemma}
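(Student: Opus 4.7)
My plan is to exploit $G_2$-equivariance together with Schur's lemma to reduce the three identities to the determination of scalar constants, and then pin those constants down via one algebraic relation and a single direct computation.

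The module $V_7$ is absolutely irreducible as a real $G_2$-representation, and the Hodge-$\ast$ induces a $G_2$-equivariant isomorphism $V_7 \cong V_7^\ast \cong \Lambda^6 V_7^\ast$; Schur's lemma over $\R$ therefore gives $\mathrm{Hom}_{G_2}(V_7, \Lambda^6 V_7^\ast) \cong \R$, spanned by $u \mapsto \ast u^\flat$. Each of the three maps $u \mapsto \ast \varphi \wedge \imath_u \varphi$, $u \mapsto \ast(u^\flat \wedge \varphi) \wedge \varphi$, and $u \mapsto \varphi \wedge \imath_u \ast \varphi$ is $G_2$-equivariant (because $\varphi$ and $\ast \varphi$ are $G_2$-invariant), so each coincides with $c_j \ast u^\flat$ for some $c_j \in \R$, and the whole lemma reduces to fixing these three constants.

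I would first dispose of the equality of the two left-hand expressions in (\ref{eq:form16}) without any computation: (\ref{eq:ast-contr}) with $\alpha^k = \varphi$ and $k = 3$ gives $\ast(u^\flat \wedge \varphi) = -\imath_u \ast \varphi$, and then commuting the two $3$-forms $\varphi$ and $\imath_u \ast \varphi$ introduces the sign $(-1)^{3 \cdot 3} = -1$, so the two sides agree. To link the constants for (\ref{eq:form15}) and (\ref{eq:form16}), I would start from $\varphi \wedge \ast \varphi = |\varphi|^2 \vol = 7 \vol$, which is immediate from the seven pairwise disjoint orthonormal simple summands in (\ref{varphi}). Applying $\imath_u$, using the graded Leibniz rule, and noting $\imath_u \vol = \ast u^\flat$ (which is the $k = 0$ instance of (\ref{eq:ast-contr})) yields
\[
\ast \varphi \wedge \imath_u \varphi - \varphi \wedge \imath_u \ast \varphi = 7 \ast u^\flat,
\]
so the two unknown constants satisfy $c_1 - c_2 = 7$.

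It then suffices to compute one of them, and I would pick (\ref{eq:form15}) at $u = e_1$ in the basis used for (\ref{varphi}) and (\ref{varphi*}). Expanding $\imath_{e_1} \varphi = e^{23} + e^{45} + e^{67}$ and discarding all summands of $\ast \varphi$ whose index set overlaps with that of $e^{23}$, $e^{45}$, or $e^{67}$, exactly three summands of $\ast \varphi$ survive (one for each), each contributing $+ e^{234567}$, so $\ast \varphi \wedge \imath_{e_1} \varphi = 3\, e^{234567} = 3 \ast e_1^\flat$. This gives $c_1 = 3$ and hence $c_2 = -4$. The only place requiring any real care is the sign bookkeeping in the Hodge-and-permutation reduction of (\ref{eq:form16}); no step presents a substantive obstacle.
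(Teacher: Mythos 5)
Your proposal is correct. Note that the paper itself offers no argument for this lemma: it simply cites \cite[Lemma 6.1]{KLS}, so there is no in-text proof to compare against. Your route is a clean, self-contained substitute. The Schur-lemma reduction is valid ($V_7$ is of real type and $\Lambda^6 V_7^\ast \cong V_7$ as $G_2$-modules, so each of the three equivariant maps is a scalar multiple of $u \mapsto \ast u^\flat$), the identity $\ast(u^\flat \wedge \varphi) = -\imath_u \ast\varphi$ from (\ref{eq:ast-contr}) combined with the sign $(-1)^{3\cdot 3}$ from commuting two $3$-forms does give the equality of the two left-hand expressions in (\ref{eq:form16}), and contracting $\varphi \wedge \ast\varphi = 7\,\vol$ with $\imath_u$ (the degree count $(-1)^{2\cdot 4}=+1$ for moving $\imath_u\varphi$ past $\ast\varphi$, and $(-1)^3$ from the Leibniz rule) correctly yields $c_1 - c_2 = 7$. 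The single basis computation $\ast\varphi \wedge (e^{23}+e^{45}+e^{67}) = 3\,e^{234567} = 3 \ast e^1$ checks out: exactly the three index-$1$-free summands of $\ast\varphi$ survive, each with sign $+1$. Compared with the more common brute-force verification of both identities separately at $u = e_1$ (followed by equivariance, or by transitivity of $G_2$ on the unit sphere), your relation $c_1 - c_2 = 7$ buys you the second identity for free; the price is the small amount of representation theory needed to know a priori that each expression is proportional to $\ast u^\flat$. No gaps.
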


We also get the following decomposition of $\Lambda^2 V_7$.

\begin{lemma} \label{Lambda2-V7} (cf. \cite[Lemma 6.2]{KLS})
Decompose $\Lambda^2 V_7^\ast = \Lambda^2_7 V_7^\ast \oplus \Lambda^2_{14} V_7^\ast$ according to (\ref{decom-L-V7}). Then
\begin{align*}
\Lambda^2_7 V_7^\ast &= \{ \alpha^2 \in \Lambda^2 V_7^\ast \mid \ast(\alpha^2 \wedge \varphi) = 2 \alpha^2\}, \quad \mbox{and}\\
\Lambda^2_{14} V_7^\ast &= \{ \alpha^2 \in \Lambda^2 V_7^\ast \mid \ast(\alpha^2 \wedge \varphi) = - \alpha^2\}.
\end{align*}
In particular,
\begin{equation} \label{eq:describe-Lamb2V7}
\begin{array}{lll}
\Lambda^2_7 V_7^\ast &= &\{ \alpha^2 + \ast(\alpha^2 \wedge \varphi) \mid \alpha^2 \in \Lambda^2 V_7^ \ast\}, \quad \mbox{and}\\[2mm]
\Lambda^2_{14} V_7^\ast &= &\{ 2 \alpha^2 - \ast(\alpha^2 \wedge \varphi) \mid \alpha^2 \in \Lambda^2 V_7^\ast\}.
\end{array}
\end{equation}
\end{lemma}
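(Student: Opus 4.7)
The plan is to treat the prescription $P(\alpha^2) := \ast(\alpha^2 \wedge \varphi)$ as a single $G_2$-equivariant linear endomorphism of $\Lambda^2 V_7^\ast$; since both $\varphi$ and the Hodge star are $G_2$-invariant, $P$ commutes with the $G_2$-action. The decomposition (\ref{decom-L-V7}) writes $\Lambda^2 V_7^\ast$ as a sum $\Lambda^2_7 V_7^\ast \oplus \Lambda^2_{14} V_7^\ast$ of two inequivalent irreducible $G_2$-modules of dimensions $7$ and $14$, so Schur's lemma forces $P$ to act by scalars $\lambda_7$ and $\lambda_{14}$ on the two summands. The two eigenspace descriptions claimed in the lemma are then equivalent to the assertion that $\lambda_7 = 2$ and $\lambda_{14} = -1$.

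To pin down $\lambda_7$ I would evaluate $P$ on the explicit representative $\imath_{e_1}\varphi = e^{23}+e^{45}+e^{67} \in \Lambda^2_7 V_7^\ast$ supplied by (\ref{decom-L-V7}). Wedging with $\varphi$ as given in (\ref{varphi}) and applying the Hodge star via (\ref{varphi*}), a direct bookkeeping of the nonzero monomials reduces $(\imath_{e_1}\varphi) \wedge \varphi$ to $2(e^{12345}+e^{12367}+e^{14567})$, whose Hodge dual is $2(e^{67}+e^{45}+e^{23}) = 2\imath_{e_1}\varphi$; hence $\lambda_7 = 2$. The scalar $\lambda_{14}$ can then be extracted with no further monomial calculation via the trace: for every basis element $e^{ij}$ one has
\[
g(P(e^{ij}),e^{ij}) \, \vol = e^{ij} \wedge \varphi \wedge e^{ij} = 0
\]
because $e^{ij} \wedge e^{ij} = 0$, so $\mathrm{tr}(P) = 0$ and the relation $7\lambda_7 + 14\lambda_{14} = 0$ forces $\lambda_{14} = -1$.

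The ``in particular'' formulas (\ref{eq:describe-Lamb2V7}) then follow from elementary spectral projection: the operator $\tfrac{1}{3}(\mathrm{Id}+P)$ has eigenvalue $1$ on $\Lambda^2_7 V_7^\ast$ (where $P = 2\,\mathrm{Id}$) and eigenvalue $0$ on $\Lambda^2_{14} V_7^\ast$ (where $P = -\mathrm{Id}$), so it is the projection onto $\Lambda^2_7 V_7^\ast$ and the image of $\alpha^2 \mapsto \alpha^2 + \ast(\alpha^2 \wedge \varphi)$ is exactly $\Lambda^2_7 V_7^\ast$; correspondingly $\tfrac{1}{3}(2\,\mathrm{Id}-P)$ projects onto $\Lambda^2_{14} V_7^\ast$, yielding the second presentation.

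The only real obstacle is the sign bookkeeping in the monomial calculation that produces $\lambda_7$: reducing length-five wedges to the standard form $\pm e^{i_1\cdots i_5}$ requires care with transpositions, and a single dropped sign would rescale $\lambda_7$ and, via the trace relation, corrupt $\lambda_{14}$ as well. As a built-in cross-check I would independently verify the equivalent identity $(\imath_u \varphi) \wedge \varphi = 2\,u^\flat \wedge \ast \varphi$ for $u = e_1$ directly from (\ref{varphi*}) and compare it against the result of the first computation.
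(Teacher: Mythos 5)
Your proof is correct. Note that the paper itself does not prove this lemma --- it is quoted from \cite[Lemma 6.2]{KLS} --- so there is no internal argument to compare against; what you have written is a complete, self-contained derivation. The structure is sound: $P(\alpha^2)=\ast(\alpha^2\wedge\varphi)$ is $G_2$-equivariant, the two summands of $\Lambda^2 V_7^\ast$ are inequivalent irreducibles, so $P$ is scalar on each, and the lemma reduces to computing $\lambda_7$ and $\lambda_{14}$. I checked your monomial computation: $(\imath_{e_1}\varphi)\wedge\varphi = 2(e^{12345}+e^{12367}+e^{14567})$ and its Hodge dual is $2(e^{67}+e^{45}+e^{23})=2\,\imath_{e_1}\varphi$, so $\lambda_7=2$ is right, and the trace identity $g(P(e^{ij}),e^{ij})\,\vol=(e^{ij}\wedge\varphi)\wedge e^{ij}=0$ correctly forces $7\lambda_7+14\lambda_{14}=0$, hence $\lambda_{14}=-1$; this is an elegant way to avoid a second round of sign bookkeeping. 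The spectral-projection argument for (\ref{eq:describe-Lamb2V7}) is immediate once the eigenvalues are known. The only point I would tighten is the invocation of Schur's lemma over $\R$: for real irreducibles the commutant need not be $\R\cdot\mathrm{Id}$ in general, so you should either remark that $V_7$ and $V_{14}$ are absolutely irreducible, or (more cheaply) observe that $g(P\alpha,\beta)\,\vol=\alpha\wedge\varphi\wedge\beta$ is symmetric in $\alpha,\beta$, so $P$ is self-adjoint and its real eigenspaces are $G_2$-invariant, which together with irreducibility of the summands gives the scalar action directly. This is a one-line fix, not a gap in the argument.
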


The following describes identities of representation of $\Sp$.

\begin{lemma} \label{Kar47-48}
(cf. \cite[(4.7), (4.8)]{Kar2005}) Decompose $\Lambda^2 W_8^\ast = \Lambda^2_7 W_8^\ast \oplus \Lambda^2_{21} W_8^\ast$ according to (\ref{decom1-L-W8}). Then
\begin{align*}
\Lambda^2_7 W_8^\ast &= \{ \alpha^2 \in \Lambda^2 W_8^\ast \mid \Phi \wedge \alpha^2 = 3 \ast \alpha^2\}, \quad \mbox{and}\\
\Lambda^2_{21} W_8^\ast &= \{ \alpha^2 \in \Lambda^2 W_8^\ast \mid \Phi \wedge \alpha^2 = - \ast \alpha^2\}.
\end{align*}
\end{lemma}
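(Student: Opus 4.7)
The natural strategy is Schur's lemma. Define the $\Sp$-equivariant endomorphism
\[
T: \Lambda^2 W_8^\ast \longrightarrow \Lambda^2 W_8^\ast, \qquad T(\alpha) := \ast(\Phi \wedge \alpha),
\]
which is equivariant because $\Phi$ and the Hodge-$\ast$ are both $\Sp$-invariant. Since the summands $\Lambda^2_7$ and $\Lambda^2_{21}$ have distinct dimensions and are therefore non-isomorphic irreducibles, Schur's lemma implies $T|_{\Lambda^2_l} = \lambda_l \, \mathrm{Id}$ for some scalars $\lambda_7, \lambda_{21} \in \R$. The lemma then amounts to showing $\lambda_7 = 3$ and $\lambda_{21} = -1$, which reduces to the evaluation of $T$ on one nonzero element of each summand.

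Using the $G_2$-splitting $W_8 = \R e_0 \oplus V_7$ with $\Phi = e^0 \wedge \varphi + \ast_7 \varphi$, one first observes the inclusion $\Lambda^2_{14} V_7^\ast \subset \Lambda^2_{21} W_8^\ast$: the source, viewed as a $G_2$-module, is irreducible of dimension $14$ (isomorphic to $V_{14}$), whereas $\Lambda^2_7 W_8^\ast$ has dimension only $7$, so any $G_2$-equivariant map $V_{14} \to \Lambda^2_7 W_8^\ast$ vanishes by Schur. I would therefore take
\[
\alpha_2 := 2 e^{23} - e^{45} - e^{67} \in \Lambda^2_{14} V_7^\ast \subset \Lambda^2_{21}, \qquad \alpha_1 := e^{01} + e^{23} + e^{45} + e^{67} = e^0 \wedge e^1 + \imath_{e_1}\varphi,
\]
verifying $\alpha_2 \in \Lambda^2_{14} V_7^\ast$ via Lemma \ref{Lambda2-V7}.

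The concrete calculation expands $\Phi \wedge \alpha_i$ using (\ref{Phi4}): for each basis 2-form $e^{\mu\nu}$ appearing in $\alpha_i$, collect the terms of $\Phi$ whose index sets are disjoint from $\{\mu,\nu\}$, form the resulting 6-forms, and apply the Hodge-$\ast$ termwise. This yields $T(\alpha_1) = 3\alpha_1$ and $T(\alpha_2) = -\alpha_2$. The second identity, together with $\alpha_2 \in \Lambda^2_{21}\setminus \{0\}$, gives $\lambda_{21} = -1$. The first shows the $(+3)$-eigenspace of $T$ is nonzero; by Schur and since $\lambda_{21} \neq 3$, this eigenspace must equal $\Lambda^2_7$, whence $\lambda_7 = 3$.

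The main labor lies in the routine but sign-delicate Hodge-star bookkeeping in the evaluation of $T(\alpha_i)$; a useful sanity check is $\mathrm{tr}(T) = 0$ (a consequence of $(\Phi \wedge e^{\mu\nu}) \wedge e^{\mu\nu} = 0$ for all $\mu,\nu$), consistent with $7\cdot 3 + 21\cdot(-1) = 0$.
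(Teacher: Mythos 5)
Your proof is correct, and I have checked the two key evaluations: with $\Phi$ as in (\ref{Phi4}) one indeed gets $\ast(\Phi\wedge e^{01})=e^{23}+e^{45}+e^{67}$, $\ast(\Phi\wedge e^{23})=e^{01}+e^{45}+e^{67}$, and cyclically, so that $T(\alpha_1)=3\alpha_1$ and $T(\alpha_2)=-\alpha_2$ as you claim. Note, however, that the paper does not prove this lemma at all: it is quoted from \cite[(4.7),(4.8)]{Kar2005}, so your argument supplies a self-contained proof where the paper only gives a citation. Your route --- an $\Sp$-equivariant operator $T(\alpha)=\ast(\Phi\wedge\alpha)$, Schur's lemma on the two non-isomorphic summands of (\ref{decom1-L-W8}), and evaluation on one test element in each --- is the standard way such eigenvalue identities are established, and your auxiliary steps are sound: $\alpha_2=2e^{23}-e^{45}-e^{67}$ does satisfy $\alpha_2\wedge\ast_7\varphi=0$, hence lies in $\Lambda^2_{14}V_7^\ast$ by (\ref{decom-L-V7}), and the dimension-count Schur argument placing $\Lambda^2_{14}V_7^\ast$ inside $\Lambda^2_{21}W_8^\ast$ is valid since the $\Sp$-equivariant projection onto $\Lambda^2_7 W_8^\ast$ is in particular $G_2$-equivariant. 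The only point worth tightening is the invocation of Schur's lemma over $\R$: a priori the endomorphism algebra of a real irreducible could be $\C$ or $\H$, so to conclude that $T$ acts by a \emph{real} scalar you should either note that $W_7$ and $W_{21}$ are of real type, or (more cheaply) observe that $T$ is self-adjoint for the invariant inner product, since $g(T\alpha,\beta)\,\vol=\Phi\wedge\alpha\wedge\beta$ is symmetric in $\alpha,\beta$; hence each irreducible summand lies in a single real eigenspace. Your trace identity $7\cdot 3+21\cdot(-1)=0$ is a nice consistency check and follows exactly as you say.
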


Finally we also recall the following standard result from the theory of differential operators.
\begin{lemma} \label{lem:codim-inf}
Let $E \to M$ and $F \to M$ be vector bundles, and $\phi: \Gamma(E) \to \Gamma(F)$ be a linear differential operator.

If $rank(E) < rank(F)$, then the image $\phi(\Gamma(E))$ has infinite codimension in $\Gamma(F)$.
\end{lemma}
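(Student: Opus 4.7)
My plan is to prove the lemma by constructing, for every integer $N$, at least $N$ linearly independent linear functionals on $\Gamma(F)$ that annihilate $\phi(\Gamma(E))$; this is equivalent to showing the algebraic cokernel is infinite dimensional. The whole argument is jet--theoretic and concentrated at a single point $x \in M$, exploiting the fact that a differential operator of order $k$ factors pointwise through the $k$-jet.

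Let $k$ be the order of $\phi$, write $n := \dim M$, $p := \operatorname{rank} E$, $q := \operatorname{rank} F$, and fix an arbitrary $x \in M$. Since $\phi$ is a linear differential operator of order $k$, the value $j^l_x(\phi(s))$ depends only on $j^{k+l}_x s$, so $\phi$ induces a well-defined linear map
\[
\widetilde P_{x,l}: J^{k+l}_x E \longrightarrow J^l_x F
\]
whose image contains $j^l_x(\phi(s))$ for every $s \in \Gamma(E)$. Using the standard dimensions $\dim J^{k+l}_x E = p\binom{n+k+l}{n}$ and $\dim J^l_x F = q\binom{n+l}{n}$, the codimension of $\operatorname{Im} \widetilde P_{x,l}$ in $J^l_x F$ is at least
\[
N_l := q\binom{n+l}{n} - p\binom{n+k+l}{n}.
\]

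The key point is $N_l \to \infty$ as $l \to \infty$: the ratio $\binom{n+k+l}{n}/\binom{n+l}{n} = \prod_{i=1}^n(1 + k/(l+i))$ tends to $1$ for fixed $k$, and since $q > p$ strictly, one obtains $N_l \sim (q-p)\binom{n+l}{n} \to \infty$. Hence for any prescribed $N$ I can pick $l$ with $N_l \geq N$ and choose $N$ linearly independent functionals $\lambda_1,\ldots,\lambda_N$ on $J^l_x F$ that vanish on $\operatorname{Im}\widetilde P_{x,l}$. This step, the asymptotic dimension count, is the only place where the hypothesis $\operatorname{rank} E < \operatorname{rank} F$ is used, and it is where I expect the real (but very mild) work to lie.

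To finish, I invoke Borel's theorem applied in a coordinate chart around $x$ with a local trivialization of $F$: it implies surjectivity of $j^l_x : \Gamma(F) \to J^l_x F$. Pulling back preserves linear independence, so $\Lambda_i := \lambda_i \circ j^l_x$ are $N$ linearly independent linear functionals on $\Gamma(F)$, and by construction $\Lambda_i(\phi(s)) = \lambda_i(\widetilde P_{x,l}(j^{k+l}_x s)) = 0$ for every $s \in \Gamma(E)$. Since $N$ was arbitrary, $\phi(\Gamma(E))$ has infinite codimension in $\Gamma(F)$. All remaining ingredients (the jet factorization of $\phi$, surjectivity of $j^l_x$, and preservation of independence under pullback along a surjection) are entirely formal.
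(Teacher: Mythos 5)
Your proof is correct and follows essentially the same route as the paper's: both arguments localize at a point, factor the order-$k$ operator through the finite-dimensional space of $(k+l)$-jets (the paper's Taylor polynomials $P_{N+d}(T_pM)\otimes E_p$ are exactly your $J^{k+l}_xE$), invoke surjectivity of the jet/Taylor map onto $J^l_xF$, and conclude from the asymptotic comparison $q\binom{n+l}{n}-p\binom{n+k+l}{n}\to\infty$, which is the same computation the paper performs by dividing by $N^n$. The only cosmetic difference is that you phrase the conclusion via annihilating functionals rather than directly bounding the codimension of the image of the induced map on jet spaces.
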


\begin{proof}
Let $\phi: \Gamma(E) \to \Gamma(F)$ be a linear differential operator of degree $d$, and fix $p \in M$. For $N \in \N$, the $N$-th Taylor polynomial at $p$ of a section in $E$ and $F$, respectively, is an element of $P_N (T_pM) \otimes E_p$ and $P_N (T_pM) \otimes F_p$, respectively, where
\[
P_N (T_pM) := \bigoplus_{k=0}^N \odot^k(T_p^\ast M)
\]
is the space of polynomials of degree at most $N$ on $T_pM$. The differential operator $\phi$ then induces for each $N$ a linear map
\[
\phi_N: P_{N+d}(T_pM) \otimes E_p \longrightarrow P_N(T_pM) \otimes F_p,
\]
associating to the $(N+d)$-th order Taylor polynomial of $\sigma \in \Gamma(E)$ the $N$-th order Taylor polynomial of $\phi(\sigma) \in \Gamma(F)$. Since any polynomial in $P_N(T_pM) \otimes F_p$ occurs as the Taylor polynomial of some section of $F$, it follows that for all $N \in \N$
\begin{align*}
\codim (\phi(\Gamma(E)) \subset \Gamma(F)) & \geq \codim (\phi_N(P_{N+d}(T_pM) \otimes E_p) \subset P_N(T_pM) \otimes F_p)\\
&\geq \dim P_N(T_pM) \otimes F_p - \dim P_{N+d}(T_pM) \otimes E_p.
\end{align*}
Let $n := \dim M$. Then
\[
\lim_{N \to \infty} \dfrac1{N^n} \dim P_N(T_pM) = \lim_{N \to \infty} \dfrac1{N^n} \binom{N+n}n = \dfrac1{n!},
\]
whence
\begin{align*}
\lim_{N \to \infty} & \dfrac1{N^n} \left(\dim P_N(T_pM) \otimes F_p - \dim P_{N+d}(T_pM) \otimes E_p\right)\\
& = \dfrac1{n!}(rank(F) - rank(E)) > 0,
\end{align*}
so that $\lim_{N \to \infty} \dim P_N(T_pM) \otimes F_p - \dim P_{N+d}(T_pM) \otimes E_p = \infty$, and this shows the claim.
\end{proof}

%%%%%%%%%%%%%%%%%%%%%%%%%%%%%%%%%%%%%%%%%%%%%%%%%%%%%%%%%
\subsection*{Acknowledgements} 
%%%%%%%%%%%%%%%%%%%%%%%%%%%%%%%%%%%%%%%%%%%%%%%%%%%%%%%%%
A part of this project has been  discussed  during HVL's visit to the Osaka City University in December 2015. She thanks Professor Ohnita for his invitation to Osaka and his hospitality. HVL and LS also thank the Max Planck Institute for Mathematics in the Sciences in Leipzig for its hospitality during extended visits. We also thank the referee for carefully reading the manuscript and for many valuable suggestions.

%%%%%%%%%%%%%%%%%%%%%%%%%%%%%%%%%%%%%%%%%%%%%%%%%%%%%%%%%


\begin{thebibliography}{99999}
%%%%%%%%%%%%%%%%%%%%%%%%%%%%%%%%%%%%%%%%%%%%%%%%%%%%%%%%%

\bibitem{Ber} {\sc M.~Berger}, Sur les groupes d'holonomie des vari\'{e}t\'{e}s \`{a} connexion affine et des  vari\'{e}t\'{e}s Riemanniennes, Bull. Soc. Math. France 83 (1955), 279-330. 

\bibitem{Besse1987} {\sc  A. Besse}, Einstein manifolds, Springer-Verlag, (1987).

\bibitem{Bonan} {\sc E. Bonan}, Sur les vari\'et\'es riemanniennes \`a groupe d'holonomie $G_2$ ou $\Sp$, C. R. Acad. Sci. Paris, 262 (1966), 127-129. 

\bibitem{Bryant1987}{\sc R. Bryant},  Metric with exceptional holonomy, Ann. of Math. 126 (1987), 525-576.

\bibitem{Bryant2005}{\sc R. Bryant},  Some remarks on $G_2$-structures, Proceedings of G\"okova Geometry-Topology Conference 2005, 75-109, G\"okova Geometry/Topology Conference (GGT), G\"okova, 2006.

\bibitem{BS1989} {\sc R. Bryant, S. Salamon}, On the construction of some complete metrics with exceptional holonomy, Duke Math. Jour., 58 (1989), 829-850. 

\bibitem{Cal5} {\sc E. Calabi}, M\'etriques k\"ahleriennes et fibr\'es holomorphes, Ann. Ecol. Norm. Sup. 12 (1979), 269-294. 

\bibitem{Dynkin1952}{\sc  E. B. Dynkin}, The maximal subgroups of the classical groups, Tr. Mosk. Math. Soc., Vol. 1 (1952), 40-166 (English translation in   Amer. Math. Soc. Transl. Ser 2, Vol. 6, pp. 245-378, 1957).

\bibitem{FG1982}{\sc M. Fern\'{a}ndez and A. Gray}, Riemannian manifolds with structure group $G_{2}$, Annali di Mat. Pura Appl. 32 (1982), 19-45.

\bibitem{FN1956}{\sc A. Fr\"olicher, A. Nijenhuis}, Theory of vector-valued differential forms. I. Derivations of the graded ring of differential forms., Indag. Math. 18 (1956), 338-359.

\bibitem{FN1956b}{\sc A. Fr\"olicher, A. Nijenhuis}, Some new cohomology invariants for complex manifolds. I, II, Indag. Math. 18 (1956), 540-552, 553-564.

\bibitem{Gra1} {\sc A. Gray}, Weak holonomy groups, Math. Z. 123 (1971), 290-300.  

\bibitem{HL1982}{\sc R. Harvey and H. B. Lawson},  Calibrated geometry, Acta Math. 148 (1982), 47-157.

\bibitem{Humphreys}{\sc J. Humphreys}, Introduction to Lie algebras and representation theory, second printing, Graduate Texts in Mathematics, 9. Springer-Verlag, New York-Berlin, 1978. 
 
\bibitem{JoyceG2} {\sc D. Joyce}, Compact Riemannian $7$-manifolds with holonomy $G_2$, I, II, J. Diff. Geom. 43 (1996), no. 2, 291-328, 329-375.
 
\bibitem{JoyceSp} {\sc D. Joyce}, Compact $8$-manifolds with holonomy $\Sp$, Inv. Math. 123 (1996), no. 3, 507-552. 

\bibitem{Joyce2000} {\sc D. Joyce}, Compact manifolds  with special holonomy, Oxford Uni. Press, (2000).

\bibitem{Joyce2007}{\sc D. Joyce}, Riemannian holonomy groups and calibrated geometry, Oxford, 2007.

\bibitem{Kar2005}{\sc S. Karigiannis}, Deformations of $G_2$ and $\Sp$-structures, Can. J. Math. 57 (2005), 1012-1055.

\bibitem{KLS}{\sc K. Kawai, H.V. L\^e and L Schwachh\"ofer}, The Fr\"olicher-Nijenhuis bracket and the geometry of $G_2$- and $\Sp$-manifolds, Ann. Mat. Pura Appl. (4) 197 (2018), no. 2, 411-432.

\bibitem{KMS1993}{\sc I. Kolar,  P. W. Michor  and  J. Slovak},  Natural operators in differential   geometry,  Springer 1993.

\bibitem{Le2013}{\sc H. V. L\^e}, Geometric structures associated with a simple Cartan 3-form, Journal of Geometry and Physics 70 (2013) 205-223.

\bibitem{Sullivan}{\sc D. Sullivan}, 
Infinitesimal computations in topology. Publ. Math. IHES 47 (1977), 269-331.

\bibitem{Yau-3}{\sc S.T Yau}, On the Ricci curvature of a compact K\"ahler manifold and the complex Monge-Amp\`ere equation I, Com. Pure and Appl. Math 31 (1978), 339-411.
\end{thebibliography}
\end{document}